\begin{document}
\newcommand{\dyle}{\displaystyle}
\newcommand{\R}{{\mathbb{R}}}
 \newcommand{\Hi}{{\mathbb H}}
\newcommand{\Ss}{{\mathbb S}}
\newcommand{\N}{{\mathbb N}}
\newcommand{\Rn}{{\mathbb{R}^n}}
\newcommand{\ieq}{\begin{equation}}
\newcommand{\eeq}{\end{equation}}
\newcommand{\ieqa}{\begin{eqnarray}}
\newcommand{\eeqa}{\end{eqnarray}}
\newcommand{\ieqas}{\begin{eqnarray*}}
\newcommand{\eeqas}{\end{eqnarray*}}
\newcommand{\Bo}{\put(260,0){\rule{2mm}{2mm}}\\}
\def\L#1{\label{#1}} \def\R#1{{\rm (\ref{#1})}}

%- Theorems and similar stuff: ------

\theoremstyle{plain}
\newtheorem{theorem}{Theorem} [section]
\newtheorem{corollary}[theorem]{Corollary}
\newtheorem{lemma}[theorem]{Lemma}
\newtheorem{proposition}[theorem]{Proposition}
\def\neweq#1{\begin{equation}\label{#1}}
\def\endeq{\end{equation}}
\def\eq#1{(\ref{#1})}
%- Definitions -----------------------------

\theoremstyle{definition}
\newtheorem{definition}[theorem]{Definition}
\newtheorem{remark}[theorem]{Remark}

%- Numeracion ------------------------------------ \numberwithin{theorem}{section} \numberwithin{equation}{section}
\numberwithin{figure}{section}
\newcommand{\res}{\mathop{\hbox{\vrule height 7pt width .5pt depth
0pt \vrule height .5pt width 6pt depth 0pt}}\nolimits}
\def\at#1{{\bf #1}: } \def\att#1#2{{\bf #1}, {\bf #2}: }
\def\attt#1#2#3{{\bf #1}, {\bf #2}, {\bf #3}: } \def\atttt#1#2#3#4{{\bf #1}, {\bf #2}, {\bf #3},{\bf #4}: }
\def\aug#1#2{\frac{\displaystyle #1}{\displaystyle #2}} \def\figura#1#2{ \begin{figure}[ht] \vspace{#1} \caption{#2}
\end{figure}} \def\B#1{\bibitem{#1}} \def\q{\int_{\Omega^\sharp}}
\def\z{\int_{B_{\bar{\rho}}}\underline{\nu}\nabla (w+K_{c})\cdot
\nabla h} \def\a{\int_{B_{\bar{\rho}}}}
\def\b{\cdot\aug{x}{\|x\|}}
\def\n{\underline{\nu}} \def\d{\int_{B_{r}}}
\def\e{\int_{B_{\rho_{j}}}} \def\LL{{\mathcal L}}
\def\itr{\mathrm{Int}\,}
\def\D{{\mathcal D}}
 \def\tg{\tilde{g}}
\def\A{{\mathcal A}}
\def\S{{\mathcal S}}
\def\H{{\mathcal H}}
\def\M{{\mathcal M}}
\def\T{{\mathcal T}}
\def\U{{\mathcal U}}
\def\N{{\mathcal N}}
\def\I{{\mathcal I}}
\def\F{{\mathcal F}}
\def\J{{\mathcal J}}
\def\E{{\mathcal E}}
\def\F{{\mathcal F}}
\def\G{{\mathcal G}}
\def\HH{{\mathcal H}}
\def\W{{\mathcal W}}
\def\H{\D^{2*}_{X}}
\def\d{d^X_M }
\def\LL{{\mathcal L}}
\def\H{{\mathcal H}}
\def\HH{{\mathcal H}}
\def\itr{\mathrm{Int}\,}
\def\vah{\mbox{var}_\Hi}
\def\vahh{\mbox{var}_\Hi^1}
\def\vax{\mbox{var}_X^1}
\def\va{\mbox{var}}
\def\SS{{\mathcal S}}
 \def\Y{{\mathcal Y}}
\def\length{{l_\Hi}}
\newcommand{\average}{{\mathchoice {\kern1ex\vcenter{\hrule
height.4pt width 6pt depth0pt} \kern-11pt} {\kern1ex\vcenter{\hrule height.4pt width 4.3pt depth0pt} \kern-7pt} {} {} }}
\def\weak{\rightharpoonup}
\def\detu{{\rm det}(D^2u)}
\def\detut{{\rm det}(D^2u(t))}
\def\detvt{{\rm det}(D^2v(t))}
\def\detv{{\rm det}(D^2v)}
\def\uuu{u_xu_yu_{xy}}
\def\uuut{u_x(t)u_y(t)u_{xy}(t)}
\def\uuus{u_x(s)u_y(s)u_{xy}(s)}
\def\uuutn{u_x(t_n)u_y(t_n)u_{xy}(t_n)}
\def\vvv{v_xv_yv_{xy}}
\newcommand{\ave}{\average\int}

\title[Polyharmonic $k-$Hessian equations]{On polyharmonic regularizations of $k-$Hessian equations: Variational methods}

\author[C. Escudero]{Carlos Escudero}
\address{}
\email{}
\keywords{Higher order elliptic equations, $k-$Hessian type equations,
Existence of solutions, Variational methods, Multiplicity of solutions.
\\ \indent 2010 {\it MSC: 35G20, 35G30, 35J50, 35J60, 35J61.}}

\date{\today}

\begin{abstract}
This work is devoted to the study of the boundary value problem
\begin{eqnarray}\nonumber
(-1)^\alpha \Delta^\alpha u = (-1)^k S_k[u] + \lambda f, \qquad x &\in& \Omega \subset \mathbb{R}^N, \\ \nonumber
u = \partial_n u = \partial_n^2 u = \cdots = \partial_n^{\alpha-1} u = 0,
\qquad x &\in& \partial \Omega,
\end{eqnarray}
where the $k-$Hessian $S_k[u]$ is the $k^{\mathrm{th}}$ elementary symmetric polynomial of eigenvalues of the Hessian matrix
and the datum $f$ obeys suitable summability properties. We prove the existence of at least two solutions, of which at least one
is isolated, strictly by means of variational methods. We look for the optimal values of $\alpha \in \mathbb{N}$ that allow the construction
of such an existence and multiplicity theory and also investigate how a weaker definition of the nonlinearity permits improving these results.
\end{abstract}
\renewcommand{\thefootnote}{\fnsymbol{footnote}}
\setcounter{footnote}{-1}
\footnote{Supported by projects MTM2013-40846-P and RYC-2011-09025, MINECO, Spain.}
\renewcommand{\thefootnote}{\arabic{footnote}}
\maketitle

\section{Introduction}

The main objective of this paper is studying elliptic equations of the form
\begin{equation}\label{rkhessian}
(-1)^\alpha \Delta^\alpha u = (-1)^k S_k[u] + \lambda f, \qquad x \in \Omega \subset \mathbb{R}^N,
\end{equation}
where $\alpha, \, N, \, k \, \in \mathbb{N}$, $\lambda \in \mathbb{R}$ and $f:\mathbb{R}^N \longrightarrow \mathbb{R}$ fulfills
suitable summability properties (see below). All throughout this work $\Omega$ will denote a bounded and open domain provided
with a smooth boundary $\partial \Omega$. The nonlinearity in~\eqref{rkhessian} is the $k-$Hessian $S_k[u]=\sigma_k(\Lambda)$ where
$$
\sigma_k(\Lambda)= \sum_{i_1<\cdots<i_k} \Lambda_{i_1} \cdots \Lambda_{i_k},
$$
is the $k^{\mathrm{th}}$ elementary symmetric polynomial and $\Lambda=(\Lambda_1,\cdots,\Lambda_n)$ are the eigenvalues of the Hessian matrix $(D^2 u)$.
Equivalently we could say that $S_k[u]$ is the sum of the $k^{\mathrm{th}}$ principal minors of the Hessian matrix.
We employ the notation of~\cite{wang} and denote
$$
S^{ij}_k(D^2 u) = \left. \frac{\partial}{\partial a_{ij}} \sigma_k[\Lambda(A)] \right|_{A=D^2 u},
$$
where $\Lambda(A)$ are the eigenvalues of the $N \times N$ matrix $A$ which entries are $a_{ij}$.
For $k=1$ equation~\eqref{rkhessian} becomes linear. Since we are interested in nonlinear equations
we will always assume $2 \le k \le N$.

The goal of this work is threefold. Our first source of inspiration is the impressive development of theory concerning the $k-$Hessian equations
$$
S_k[u] = f,
$$
and related problems that has taken place during the last
years~\cite{caffarelli3,wang1,wang2,labutin,wang3,trudinger,trudinger1,wang4,wang5,wang6,wang7,wang8,wang9,wang10,wang11,wang12,wang}.
This equation is both a generalization of the Monge-Amp\`ere or $N-$Hessian equation~\cite{caffarelli1,caffarelli2}
$$
\det(D^2 u) = f,
$$
and the Poisson or $1-$Hessian equation~\cite{gilbarg}\footnote{Properly speaking, the $1-$Hessian would not show the minus in front of the Laplacian,
although this correction is of course completely irrelevant for the differential problem.}
$$
-\Delta u = f.
$$
In order to ensure the ellipticity of the nonlinear $k-$Hessian equations one should look for solutions $u$ such that
$$
\Lambda(D^2 u) \in \bar{\gimel}_k,
$$
where $\gimel_k$ is an open symmetric convex cone in $\mathbb{R}^N$ defined in the following way
$$
\gimel_k = \{(\Lambda_1, \cdots, \Lambda_N) \in \mathbb{R}^N | \sigma_j >0 \,\, \forall \, j = 1, \cdots, k\};
$$
all functions fulfilling this property are called $k-$admissible~\cite{wang}.
The existence theory for the Dirichlet problem associated to the $k-$Hessian equations requires a geometric assumption
on the boundary of the domain $\partial \Omega$ in which such a problem is posed. In particular
$$
\sigma_{k-1}(\kappa_1, \cdots, \kappa_{N-1}) \ge C_0 > 0,
$$
on $\Omega$ for some positive constant $C_0$, where $\kappa_i$, $i=1, \cdots, N-1$, are the principal curvatures of $\Omega$
with respect to its inner normal~\cite{wang}. As in this reference, we will denote all domains fulfilling this property as $(k-1)-$convex.
On one hand, turning a $k-$Hessian equation into one of its polyharmonic regularizations~\eqref{rkhessian}
with $\alpha \ge 2$ (an assumption that will hold all throughout this text) means turning a fully nonlinear problem into a semilinear, and thus simpler, one.
On the other hand, however, considering the existence theory for~\eqref{rkhessian} allows us to drop two assumptions that were necessary
in the fully nonlinear case: our solutions do not have to be $k-$admissible and the boundary of our domains does not have to be $(k-1)-$convex.
So this type of problem yields a different viewpoint on an interesting nonlinearity.

Our second source of inspiration are polyharmonic problems by themselves. Despite their relevance in different applications and intrinsic
mathematical interest, they have being much less studied than their harmonic counterparts. This could be perhaps due to the usual absence
of maximum principles in the polyharmonic case, while such principles played in fact a crucial role in the development of the theory for
second order problems. Although our present knowledge of higher order boundary value problems cannot be compared to the corresponding one for
second order boundary value problems, it has nevertheless substantially grown over the last years~\cite{GGS}. For instance, boundary value
problems for the biharmonic operator have already been considered with different nonlinearities~\cite{AGGM,BG,CEGM,DDGM,DFG,FG,FGK,moradifam}.
However, to the best of our knowledge, a Hessian nonlinearity was considered for this operator in~\cite{n5} for the first time. Despite the
novelty of this sort of problem, it is rather natural to consider biharmonic, or more in general polyharmonic, equations provided with
nonlinear functions of the second derivatives of the solution. The most natural candidates for these nonlinearities are the $k-$Hessians for the
following reason: the Hessian matrix, which entries are all possible second derivatives of the solution, possesses exactly $N$ tensorial invariants,
the $N$ different $k-$Hessians. Therefore one of our present goals is to continue and to generalize our studies on this type of
problems~\cite{n1,n2,n3,n4,n5,n6}, that so naturally appear in the theory of higher order partial differential equations. A related question that could
be of independent interest is the presence of fractional rather than polyharmonic operators. A possible starting point to approach this kind of problem
could be works such as~\cite{FeVe}, at least when radially symmetric solutions are considered~\cite{n6}.

Our last motivation is the connection of these equations with theoretical condensed matter physics and the renormalization group~\cite{escudero,escudero2}.
In particular this sort of equations has been proposed to describe the growth of some semiconductor structures by means of epitaxial methods.
In this context the solution to the partial differential equation describes the height of the grown crystal and the domain where the equation is defined
the substrate over which it is grown. This more phenomenological framework
also opens the possibility of studying these equations in the field of non-equilibrium phase transitions, and in particular within the general theory
of the propagation of stable phases over unstable ones. In this case the solution describes the front separating both. In physics there exists an interest
in studying the dynamics and morphology of such fronts, a problem commonly approached with multi-scale methods like the renormalization group. An interesting
fact is how the nonlinearities we consider transform under the renormalization group, what in turn is related to intriguing underlying physics, although this
question is not free from technicalities.
Despite the potential interest of our results in this field, in the present work we will limit ourselves to the development of mathematical theory
and leave any application for the future.

For the time being we will restrict ourselves to the study of the polyharmonic boundary value problem
\begin{eqnarray}\label{dirichlet}
(-1)^\alpha \Delta^\alpha u = (-1)^k S_k[u] + \lambda f, \qquad x &\in& \Omega \subset \mathbb{R}^N, \\ \nonumber
u = \partial_n u = \partial_n^2 u = \cdots = \partial_n^{\alpha-1} u = 0,
\qquad x &\in& \partial \Omega,
\end{eqnarray}
which we refer to as the Dirichlet problem for partial differential equation~\eqref{rkhessian}.
The main theoretical tool that we employ in building the existence theory for this problem is the calculus of variations.
Despite the interest of studying different boundary value problems, such as
\begin{eqnarray}\label{navier}
(-1)^\alpha \Delta^\alpha u = S_k[u] + \lambda f, \qquad x \in \Omega \subset \mathbb{R}^N, \\ \nonumber
u = \Delta u = \Delta^2 u = \cdots = \Delta^{\alpha-1} u = 0,
\qquad x \in \partial \Omega,
\end{eqnarray}
which we refer to as the Navier problem for partial differential equation~\eqref{rkhessian}, we are not going to do so
in the present work. The reason is that variational methods cannot be applied in this case and different techniques,
like fixed methods, are needed~\cite{n5}. Herein we will limit ourselves to the variational setting.

Now we list the main results of this paper. The definitions of $p^*$ and $h^1_r(\Omega)$ come in the next section.

\begin{theorem}\label{main}
Let
\begin{itemize}
\item[(i)] $\alpha= \left\lceil 2+ \frac{k-2}{2k}N \right\rceil$, $f \in L^1(\Omega)$ for $N/2 < k \le N$,
\item[(ii)] $\alpha= \left\lceil \frac{Nk-N+4k}{2k+2} \right\rceil$, $f \in L^{p^*}(\Omega)$ for $2 \le k < N/2$, and
\item[(iii)] $\alpha= N/2$, $f \in h^1_r(\Omega)$ for $k = N/2$.
\end{itemize}
Then there exist a $\lambda_0 > 0$ such that for $0 \le |\lambda| < \lambda_0$ and $2 \le k \le N$ problem~\eqref{dirichlet}
has at least two solutions. Moreover, these solutions differ in $W^{\alpha,2}_0(\Omega)$ norm and one of them is unique in the ball
$$
\mathcal{B}=\left\{u \in W^{\alpha,2}_0(\Omega) : 0 \le \|u\|_{W^{\alpha,2}_0(\Omega)} \le \tau \right\},
$$
for some $\tau>0$.
\end{theorem}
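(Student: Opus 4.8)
The plan is to recast the Dirichlet problem~\eqref{dirichlet} as an Euler--Lagrange equation for a suitable functional on $W^{\alpha,2}_0(\Omega)$ and then to produce one solution by a local minimization argument near the origin and a second one by a mountain-pass argument. First I would introduce the energy functional
\[
J_\lambda(u) = \frac{1}{2}\int_\Omega |\Delta^{\alpha/2} u|^2\,dx \;-\; c_k\int_\Omega u\, S_k[u]\,dx \;-\; \lambda \int_\Omega f u\,dx,
\]
(with the obvious modification $\int_\Omega |\nabla \Delta^{(\alpha-1)/2} u|^2$ when $\alpha$ is odd, and with $c_k$ the normalizing constant coming from the divergence structure $S_k[u] = \frac{1}{k}\,\partial_i\!\big(S^{ij}_k(D^2u)\,\partial_j u\big)$ recalled from~\cite{wang}), so that critical points of $J_\lambda$ are exactly the weak solutions of~\eqref{dirichlet}. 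The role of the three regimes (i)--(iii) on $\alpha$ is precisely to guarantee that the cubic-type term $\int_\Omega u\,S_k[u]$ and the linear term $\int_\Omega f u$ are well defined and weakly continuous on $W^{\alpha,2}_0(\Omega)$: in case (i) we need $W^{\alpha,2}_0 \hookrightarrow W^{2,k}$ together with $W^{\alpha,2}_0 \hookrightarrow L^\infty$ to pair against $f\in L^1$; in case (ii) the relevant embedding is $W^{\alpha,2}_0 \hookrightarrow W^{2,2k}\cap L^{(p^*)'}$ with the critical Sobolev exponent $p^*$ defined in the next section; in the borderline case (iii) one uses $W^{N/2,2}_0 \hookrightarrow $ the Zygmund/Orlicz space dual to $h^1_r(\Omega)$. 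I would verify these embeddings and deduce that $J_\lambda \in C^1(W^{\alpha,2}_0(\Omega),\mathbb{R})$ with weakly sequentially continuous lower-order terms.

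Next I would establish the geometric hypotheses of the mountain pass theorem. Because the nonlinear term is cubic (homogeneous of degree $3$ after the integration by parts, since $S_k[u]$ is homogeneous of degree $k$ in $D^2u$ but appears paired with one extra factor of $u$; more precisely $\int_\Omega u S_k[u]$ scales like $\|u\|^{k+1}$ in a suitable sense, and in any case superquadratically), the quadratic part dominates near the origin: there exist $\rho, \beta > 0$ and $\lambda_0 > 0$ such that for $|\lambda| < \lambda_0$ one has $J_\lambda(u) \ge \beta > J_\lambda(0) \ge -C|\lambda|$ whenever $\|u\|_{W^{\alpha,2}_0} = \rho$, using the continuous embeddings above together with Young's inequality to absorb the $\lambda f$ term. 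Simultaneously, restricting to an appropriate direction I would exhibit $e \in W^{\alpha,2}_0(\Omega)$ with $\|e\| > \rho$ and $J_\lambda(e) < \beta$, exploiting that along a ray $tv$ the functional behaves like $\frac{t^2}{2}\|v\|^2 - c\,t^{k+1}\!\int v S_k[v] - \lambda t \int f v$, which tends to $-\infty$ provided $\int_\Omega v\,S_k[v] > 0$ for some admissible $v$ (a radial bump function will do, as in~\cite{n5,n1}). The local minimum inside the ball of radius $\rho$ gives the first solution $u_0$ — here one argues that $\inf_{\|u\|\le\rho} J_\lambda$ is attained by weak lower semicontinuity of the quadratic part and weak continuity of the remainder, and that for small $|\lambda|$ this infimum is negative (testing with a small multiple of $f$ or of $v$), so $u_0 \ne 0$ lies in the open ball; uniqueness of the solution in a possibly smaller ball $\mathcal{B}$ of radius $\tau$ follows from a contraction/strict-convexity estimate on the quadratic-dominated regime. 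The mountain pass critical level $c = \inf_\gamma \max_t J_\lambda(\gamma(t)) \ge \beta$ then yields the second solution $u_1$, and $J_\lambda(u_1) \ge \beta > J_\lambda(u_0)$ forces $u_1 \ne u_0$; since $J_\lambda$ is constant on balls only if... — more simply, distinct critical values give distinct critical points, hence they differ in $W^{\alpha,2}_0(\Omega)$ norm as claimed.

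The main obstacle I anticipate is the Palais--Smale condition for $J_\lambda$, which is needed to make the mountain pass level an actual critical value. The difficulty is twofold: the nonlinearity $\int_\Omega u S_k[u]$ is not sign-definite and does not satisfy the Ambrosetti--Rabinowitz superquadraticity condition globally, so boundedness of a $(PS)_c$ sequence is not automatic; and once boundedness is secured, one must upgrade weak convergence to strong convergence, which requires the compactness of the embeddings $W^{\alpha,2}_0(\Omega) \hookrightarrow W^{2,k}(\Omega)$ (resp.\ $W^{2,2k}$, resp.\ the borderline space) — this is exactly where the sharp choice of $\alpha$ in (i)--(iii) is forced, since a strictly subcritical embedding is compact whereas the critical one is not. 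I would handle boundedness by combining $J_\lambda(u_n) = O(1)$ with $\langle J_\lambda'(u_n), u_n\rangle = o(\|u_n\|)$ to control $\|u_n\|^2$ in terms of lower-order powers (using the precise homogeneity of the cubic term and, in case (ii)/(iii), a careful use of the Sobolev inequality with the critical exponent together with the smallness of $|\lambda|$), and then extract a strongly convergent subsequence via compactness; passing to the limit in $J_\lambda' = 0$ finishes the argument. A secondary technical point is the integration-by-parts identity making $J_\lambda'$ coincide with~\eqref{dirichlet} in the weak sense, which needs the $W^{2,\cdot}$-regularity of test functions and the boundary conditions $u = \partial_n u = \cdots = \partial_n^{\alpha-1}u = 0$; this is routine but must be done with the correct function spaces.
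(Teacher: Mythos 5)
Your overall architecture (Euler--Lagrange functional on $W^{\alpha,2}_0(\Omega)$, a local minimum near the origin plus a mountain-pass critical point, smallness of $|\lambda|$ used both for the geometry and for the boundedness of Palais--Smale sequences, strict convexity in a small ball for the local uniqueness) is the same as the paper's. However, there is a genuine gap at the step you yourself flag as the main obstacle: passing from a bounded Palais--Smale sequence to an actual critical point. You propose to ``upgrade weak convergence to strong convergence'' using compactness of the embeddings $W^{\alpha,2}_0(\Omega)\hookrightarrow W^{2,kp^*}(\Omega)$, asserting that the sharp choice of $\alpha$ makes these embeddings subcritical and hence compact. That is backwards in cases (ii) and (iii): the values of $\alpha$ in the statement are the \emph{smallest} for which the embeddings hold, so the embeddings are critical (exactly so for $k=N/2$, and whenever the defining exponent is an integer) and compactness fails. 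This is precisely why the paper abandons the Arzel\`a--Ascoli route outside the range $N/2<k\le N$, where the embedding into $C^{0,\gamma}(\bar\Omega)$ does give compactness (proposition~\ref{palais}). The paper's substitute is the weak$-*$ continuity of the minors of $D^2u$ --- a compensated-compactness fact resting on the divergence structure $S_k[u]=\frac1k\sum_{i,j}\partial_{x_i}(u_{x_j}S^{ij}_k[u])$ and proved by induction on the order of the minor (lemma~\ref{leminor}, propositions~\ref{weakcont} and~\ref{weakcontast}) --- which allows one to pass to the limit in $J'[u_n]\to 0$ and identify the weak limit as a critical point \emph{without} strong convergence. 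Without this ingredient, or a replacement for it, your argument does not close for $2\le k\le N/2$.

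A second, related gap is in case (iii): to make $\int u\,S_{N/2}[u]\,dx$ well defined for $u\in W^{N/2,2}_0(\Omega)$ it is not enough to identify the dual of $h^1_r(\Omega)$; one must also show that the nonlinearity itself lands in the local Hardy space, $S_{N/2}[u]\in h^1_z(\bar\Omega)$, which again uses the divergence form together with the Coifman--Lions--Meyer--Semmes/Grafakos estimates (lemmas~\ref{hardyz} and~\ref{localhardy}); pairing $u\in\mathrm{bmo}$ against a merely $L^1$ quantity is not defined. Two smaller points: the nonlinear term is homogeneous of degree $k+1$, not cubic, so the mountain-pass geometry should be set up with the exponent $k+1$ from the start, as in proposition~\ref{downbound}; and ``distinct critical values give distinct critical points'' does not by itself yield that the two solutions \emph{differ in norm} --- in the paper this follows from the cutoff construction, which confines the minimizer to the ball $\|\cdot\|\le R_0$ while the mountain-pass solution must lie outside it.
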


\begin{proof}
This result is a direct consequence of theorem~\ref{existmul}, theorem~\ref{existuni}, corollary~\ref{unimul}, remark~\ref{lambda0}, theorem~\ref{existwc}
and theorem~\ref{existwc2}.
\end{proof}

Now we will see how a weaker definition of the nonlinearity allows to build our existence and multiplicity theory for lower values of $\alpha$.
Consider the boundary value problem
\begin{eqnarray}\label{dirichletw}
(-1)^\alpha \Delta^\alpha u = (-1)^k \mathcal{S}_k[u] + \lambda f, \qquad x &\in& \Omega \subset \mathbb{R}^N, \\ \nonumber
u = \partial_n u = \partial_n^2 u = \cdots = \partial_n^{\alpha-1} u = 0,
\qquad x &\in& \partial \Omega,
\end{eqnarray}
where $\mathcal{S}_k[u]$ is the weak $k-$Hessian defined by equation~\eqref{weakdirichlet}.

\begin{theorem}\label{mainw}
Let
\begin{itemize}
\item[(i)] $f \in L^1(\Omega)$ for $N/2 < k \le N$,
\item[(ii)] $f \in L^{p^*}(\Omega)$ for $2 \le k < N/2$, and
\item[(iii)] $f \in h^1_r(\Omega)$ for $k = N/2$,
\end{itemize}
and $\alpha= \left\lceil \frac{Nk-N+4k}{2k+2} \right\rceil$ in all cases.
Then there exist a $\lambda_0 > 0$ such that for $0 \le |\lambda| < \lambda_0$ and $2 \le k \le N$ problem~\eqref{dirichletw}
has at least two solutions. Moreover, these solutions differ in $W^{\alpha,2}_0(\Omega)$ norm and one of them is unique in the ball
$$
\mathcal{B}=\left\{u \in W^{\alpha,2}_0(\Omega) : 0 \le \|u\|_{W^{\alpha,2}_0(\Omega)} \le \tau \right\},
$$
for some $\tau>0$.
\end{theorem}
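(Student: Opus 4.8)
The plan is to mirror, essentially verbatim, the variational machinery already assembled for Theorem~\ref{main}, substituting the weak $k$-Hessian $\mathcal{S}_k[u]$ for the classical nonlinearity $S_k[u]$ throughout. The point of the weaker definition in~\eqref{weakdirichlet} is that the associated functional term $\int_\Omega u\,\mathcal{S}_k[u]\,dx$ (suitably normalized) is well defined and weakly continuous on $W^{\alpha,2}_0(\Omega)$ under a less demanding Sobolev embedding than the classical one; hence the threshold on $\alpha$ needed to make the energy functional
\[
J_\lambda(u) = \frac{1}{2}\int_\Omega |D^\alpha u|^2\,dx - \frac{1}{k+1}\int_\Omega u\,\mathcal{S}_k[u]\,dx - \lambda\int_\Omega f u\,dx
\]
well defined, $C^1$, coercive near the origin and weakly lower semicontinuous drops from the three $k$-regime-dependent values in Theorem~\ref{main}(i)--(iii) down to the single value $\alpha=\left\lceil \frac{Nk-N+4k}{2k+2}\right\rceil$. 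Concretely, I would first record that, with this $\alpha$, the embedding $W^{\alpha,2}_0(\Omega)\hookrightarrow W^{2,q}(\Omega)$ (respectively into the appropriate Hardy or Lebesgue space dictated by the case split on $f$) holds with the exponent needed for the weak Hessian integral to be finite and weakly continuous — this is precisely the computation the definition~\eqref{weakdirichlet} was engineered to pass.

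Second, I would invoke the abstract existence-and-multiplicity apparatus exactly as in the classical case: the analogues of Theorem~\ref{existmul}, Theorem~\ref{existuni} and Corollary~\ref{unimul} should apply with no structural change, since their proofs only use (a) the direct method to produce a global minimizer, (b) a mountain-pass or local-minimizer argument near $0$ to produce a second critical point, and (c) a local uniqueness/strict-convexity estimate in a small ball $\mathcal{B}$. Each of these inputs depends on the nonlinear term only through the same three analytic properties — weak continuity, a growth bound, and a Lipschitz-type estimate near the origin — all of which $\mathcal{S}_k[u]$ enjoys under the stated $\alpha$. The value $\lambda_0$ and the radius $\tau$ are then produced exactly as in Remark~\ref{lambda0} and the classical proof, and the fact that the two solutions are distinct in $W^{\alpha,2}_0(\Omega)$ norm follows because one lies in $\mathcal{B}$ and the other does not, just as before.

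Third, the case distinction on the datum $f$ — $L^1(\Omega)$ for $N/2<k\le N$, $L^{p^*}(\Omega)$ for $2\le k<N/2$, and $h^1_r(\Omega)$ for $k=N/2$ — is handled by the same duality pairings used for Theorem~\ref{main}: with $\alpha=\left\lceil\frac{Nk-N+4k}{2k+2}\right\rceil$ one checks that $W^{\alpha,2}_0(\Omega)$ embeds into $L^\infty(\Omega)$ when $k>N/2$ (so $L^1$ data pair against bounded test functions), into $L^{(p^*)'}(\Omega)$ when $k<N/2$ (matching $L^{p^*}$ data), and into the local Hardy-space predual when $k=N/2$ (matching $h^1_r$ data, via the Fefferman--Stein duality already exploited in Theorem~\ref{existwc2}). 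Thus the linear functional $u\mapsto\lambda\int_\Omega f u$ is bounded on $W^{\alpha,2}_0(\Omega)$ in every case, which is all that the variational scheme requires of $f$.

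The main obstacle, and the only place where genuine work beyond citation is needed, is verifying the weak continuity of $u\mapsto\int_\Omega u\,\mathcal{S}_k[u]\,dx$ at the reduced regularity threshold — i.e.\ checking that the compensated-compactness / integration-by-parts structure built into the definition~\eqref{weakdirichlet} really does lower the required Sobolev exponent to exactly $\alpha=\left\lceil\frac{Nk-N+4k}{2k+2}\right\rceil$, with no loss at the endpoint. This amounts to a careful bookkeeping of how many derivatives can be shifted off the product of second derivatives onto the test function and then estimating the resulting multilinear form by Hölder against the embedding constants; the endpoint case $k=N/2$ is the delicate one, since there the relevant space is borderline and one must use the Hardy-space refinement rather than a crude $L^p$ bound. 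Once that single embedding lemma is in place, Theorem~\ref{mainw} follows by re-running the proof of Theorem~\ref{main} with the single value of $\alpha$ throughout.
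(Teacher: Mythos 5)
Your proposal follows essentially the same route as the paper: Theorem~\ref{mainw} is obtained by re-running the variational scheme behind Theorem~\ref{main} with the divergence-form nonlinearity, the only genuinely new ingredient being the weak continuity of the term $\sum_{i,j}\int u_i u_j S_k^{ij}[u]\,dx$ at the uniform value of $\alpha$, which the paper supplies in Theorem~\ref{existwkh} via Lemma~\ref{leminor} (weak continuity of the minors of order $k-1$ entering $S_k^{ij}$) combined with Rellich--Kondrachov and the product of weakly and strongly convergent sequences. One correction of emphasis: the delicate range for the weak formulation is $N/2<k\le N$, where the new $\alpha$ is strictly smaller and the $L^p$-based minor argument just described does the work, whereas the endpoint $k=N/2$ keeps the same $\alpha$ and simply inherits the Hardy-space result of Theorem~\ref{existwc2}, so it is not where the extra effort lies here.
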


\begin{proof}
This result is a direct consequence of the auxiliary results that lead to theorem~\ref{main} together with theorem~\ref{existwkh}.
\end{proof}

The remainder of the paper is devoted to prove all the auxiliary results needed in the proof of our main theorems. These are placed in the following sections:
in section~\ref{variational} we develop the variational formulation of the problem and prove existence and multiplicity of solutions to the problem at hand
in the range $N/2 < k \le N$. The proof makes use of both the {\it mountain pass} and the Arzel\`a-Ascoli theorems.
In the range $2 \le k \le N/2$ we prove existence
and multiplicity of solutions substituting the Arzel\`a-Ascoli theorem by suitable weak continuity results; this is recorded in section~\ref{weakcontsec}.
Moreover, we show in this section how substituting the $k-$Hessian by a weak $k-$Hessian (see theorem~\ref{mainw} above)
it is possible to build our existence and multiplicity theory
for a ``uniform'' value of $\alpha$. The proof makes use again of the weak continuity properties of the nonlinearity; this suggests that it is indeed this approach
the most natural for the type of problem at hand (see theorem~\ref{weakcont2} too).
It also seems that the weak formulation of the problem, i.~e.~\eqref{dirichletw}, since it is the one
that allows the use of a more uniform value of $\alpha$, is the most natural from a variational viewpoint.
In section~\ref{further} we collect some other results that complement the previous developments.
In particular, we investigate what happens if we make a uniform hypothesis on the summability of the datum $f$.
This in turn allows to build the existence and multiplicity theory for a uniform value of $\alpha$, but the result is less optimal than
the corresponding one of theorem~\ref{mainw} and even that of theorem~\ref{main}.

Before introducing the technical results let us mention a couple of remarks regarding notation. For the partial derivatives of the solution
we will indistinctively use $\partial_{x_i} u$, $u_{x_i}$, and $u_i$ for $1 \le i \le N$. We will denote constants in different cases and sections with
the same letters but this does not mean that they share the same value. In general, the numerical value of a constant may change from line to line, and
we will only use different letters for different constants when we appreciate some risk of confusion.

\section{Variational approach}\label{variational}
\subsection{Variational and functional settings}\label{variationalset}

As already said in the previous section, our aim is studying boundary value problem~\eqref{dirichlet} by means of variational
methods. The first step is finding a suitable functional for this purpose.
We look for a functional
$$
J[u]: W^{\alpha,2}_0(\Omega) \longrightarrow \mathbb{R},
$$
which is well defined in the functional space dictated by the linear term in the equation.

\begin{lemma}\label{definition}
The functional
\begin{equation}\label{func0}
J[u]= \int \left[ -\lambda f u + \frac{1}{2} \left| \Delta ^{\lfloor \alpha/2 \rfloor}
\nabla^{2\left(\alpha/2-\lfloor \alpha/2 \rfloor \right)} u \right|^2 - \frac{(-1)^k}{k+1} u S_k[u]\right] dx
\end{equation}
is well defined in $W^{\alpha,2}_0(\Omega)$ in the following cases:
\begin{itemize}
\item[(a)] If $\alpha= \left\lceil 2+ \frac{k-2}{2k}N \right\rceil$, $f \in L^1(\Omega)$ and $N/2 < k \le N$.
\item[(b)] If $\alpha= \left\lceil \frac{Nk-N+4k}{2k+2} \right\rceil$, $f \in L^{p^*}(\Omega)$ for $p^*:=N(k+1)/[k(N+2)]$ and $2 \le k < N/2$.
\item[(c)] If $\alpha= N/2+1$, $f \in L^1(\Omega)$ and $k = N/2$.
\end{itemize}
\end{lemma}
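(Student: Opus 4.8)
The functional $J$ has three terms, and well-definedness amounts to showing each is a finite real number for every $u \in W^{\alpha,2}_0(\Omega)$. The linear term $\int \lambda f u$ is controlled by Hölder's inequality together with a Sobolev embedding: in case (a) we need $W^{\alpha,2}_0(\Omega) \hookrightarrow L^\infty(\Omega)$ so that $f u \in L^1$ when $f \in L^1$, which holds precisely when $\alpha > N/2$; in case (b) we need $W^{\alpha,2}_0 \hookrightarrow L^{(p^*)'}(\Omega)$ with $(p^*)'$ the conjugate exponent, so $\alpha$ must be large enough that the Sobolev conjugate of $2$ at differentiability order $\alpha$ dominates $(p^*)'$; in case (c) again $\alpha = N/2 + 1 > N/2$ gives the $L^\infty$ embedding. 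The middle (quadratic) term is finite for any $u \in W^{\alpha,2}_0$ essentially by definition of the norm, once one checks that $\Delta^{\lfloor \alpha/2\rfloor} \nabla^{2(\alpha/2 - \lfloor\alpha/2\rfloor)} u$ is an $L^2$ combination of $\alpha$-th order derivatives — this is a routine distinction between $\alpha$ even (pure power of $\Delta$) and $\alpha$ odd (one extra gradient, hence $\nabla \Delta^{(\alpha-1)/2}u$), in either case a bona fide element of $L^2(\Omega)$.

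The crux is the nonlinear term $\int u\, S_k[u]\, dx$. The standard device is to integrate by parts to move derivatives off $u$ and exploit the divergence structure of $S_k$: recalling that $S_k[u] = \frac{1}{k}\sum_{i,j} S_k^{ij}(D^2u)\, u_{ij}$ and that the matrix $S_k^{ij}$ is divergence-free (i.e. $\sum_i \partial_i S_k^{ij}(D^2u) = 0$), one can write $u\, S_k[u]$, after integration by parts, as a sum of terms each of which is (up to constants) a product of $k+1$ factors, each factor being a second derivative $u_{i_a j_a}$ — schematically $u\,S_k[u] \sim \sum \pm\, u_{i_1 j_1}\cdots u_{i_{k+1} j_{k+1}}$ modulo boundary terms, which vanish because of the Dirichlet conditions $u = \partial_n u = \cdots = \partial_n^{\alpha-1}u = 0$ on $\partial\Omega$ and $\alpha \ge 2$. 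Then I would estimate $\int |u_{i_1 j_1}\cdots u_{i_{k+1}j_{k+1}}|\,dx$ by the generalized Hölder inequality as $\prod_{a=1}^{k+1}\|D^2 u\|_{L^{q}}$ with $q = k+1$, so the requirement becomes $\|D^2 u\|_{L^{k+1}(\Omega)} < \infty$ for all $u \in W^{\alpha,2}_0(\Omega)$, i.e. the embedding
\begin{equation}\nonumber
W^{\alpha,2}_0(\Omega) \hookrightarrow W^{2,k+1}(\Omega).
\end{equation}
By the Sobolev embedding theorem this holds iff $\alpha - N/2 \ge 2 - N/(k+1)$, equivalently $\alpha \ge 2 + \frac{N}{2} - \frac{N}{k+1} = 2 + \frac{(k-1)N}{2(k+1)}$; taking the ceiling and checking the arithmetic recovers $\alpha = \lceil \frac{Nk - N + 4k}{2k+2}\rceil$, which is the value in (b), while in the regime $k > N/2$ of case (a) and in the critical case $k = N/2$ one similarly verifies that the stated (larger) choices of $\alpha$ suffice — indeed in case (a) the embedding $W^{\alpha,2}_0 \hookrightarrow C^0(\bar\Omega) \cap W^{2,k+1}$ is comfortably satisfied.

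I would organize the write-up as: (1) recall the relevant Sobolev embeddings for $W^{\alpha,2}_0(\Omega)$; (2) dispatch the linear term and the quadratic term; (3) carry out the integration by parts on $\int u\,S_k[u]$ using the Newton-tensor / divergence-free structure, noting the boundary terms vanish; (4) apply generalized Hölder to reduce to the embedding $W^{\alpha,2}_0 \hookrightarrow W^{2,k+1}$; (5) verify in each of the three cases that the prescribed $\alpha$ is at least the threshold value, treating $\alpha$ even/odd uniformly since only the total order $\alpha$ enters the embedding. The main obstacle is step (3): making the integration by parts rigorous for merely $W^{\alpha,2}_0$ functions (rather than smooth ones) requires a density argument — approximate $u$ by $C^\infty_c$ functions, establish the identity and the estimate for those, and pass to the limit using the continuity of the multilinear form $u \mapsto \int u_{i_1j_1}\cdots u_{i_{k+1}j_{k+1}}$ on $W^{2,k+1}$, which is exactly what the Hölder bound of step (4) provides; the circularity is only apparent since the bound is purely algebraic once the embedding is in hand.
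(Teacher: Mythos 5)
Your treatment of the linear and quadratic terms is correct, but step (3)--(4), the heart of the argument, fails. The integration-by-parts identity you propose cannot hold: $u\,S_k[u]$ carries a total of $2k$ derivatives of $u$, whereas a product $u_{i_1j_1}\cdots u_{i_{k+1}j_{k+1}}$ carries $2k+2$, and integration by parts conserves the total derivative count. What the divergence structure $S_k[u]=\frac1k\sum_{i,j}\partial_{x_i}\bigl(u_{x_j}S_k^{ij}[u]\bigr)$ actually yields is $\int u\,S_k[u]\,dx=-\frac1k\sum_{i,j}\int u_{x_i}u_{x_j}S_k^{ij}[u]\,dx$, i.e.\ two \emph{first} derivatives times $k-1$ second derivatives (this is precisely the weak formulation the paper studies separately in~\eqref{weakdirichlet}). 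Moreover, even granting your reduction to the embedding $W^{\alpha,2}_0(\Omega)\hookrightarrow W^{2,k+1}(\Omega)$, the arithmetic does not close: the threshold $2+\frac N2-\frac N{k+1}$ equals $\frac{Nk-N+4k+4}{2k+2}$, not $\frac{Nk-N+4k}{2k+2}$, and the discrepancy $\frac{2}{k+1}$ changes the ceiling. Concretely, for $N=10$, $k=2$ the lemma asserts $\alpha=3$, but $W^{3,2}_0(\Omega)\hookrightarrow W^{2,q}_0(\Omega)$ only for $q\le 5/2<3=k+1$; likewise in case (a) with $N=5$, $k=3$ the stated $\alpha=3$ gives $q\le 10/3<4$, so the embedding you call ``comfortably satisfied'' fails there too.

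The paper's proof needs no integration by parts. It estimates $\int|u\,S_k[u]|\,dx\le\|S_k[u]\|_p\,\|u\|_q$ with $1/p+1/q=1$ and uses that $S_k[u]$ is a $k$-fold (not $(k+1)$-fold) product of second derivatives, so $S_k[u]\in L^p$ as soon as $D^2u\in L^{kp}$. The required embedding is therefore $W^{\alpha,2}_0(\Omega)\hookrightarrow W^{2,kp}_0(\Omega)$ together with $W^{2,kp}_0(\Omega)\hookrightarrow L^q(\Omega)$. In case (a) one takes $p=1$, $q=\infty$ (legitimate because $k>N/2$ makes $W^{2,k}_0(\Omega)\hookrightarrow L^\infty(\Omega)$), which requires only $W^{\alpha,2}_0(\Omega)\hookrightarrow W^{2,k}_0(\Omega)$; in case (b) the exponent $p=p^*$ is tuned so that both embeddings hold simultaneously for the same $\alpha$, leading to the exponent $kp^*=N(k+1)/(N+2)$, which is strictly smaller than your $k+1$ and accounts exactly for the missing $\frac{2}{k+1}$. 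To repair your argument you would either have to adopt this direct H\"older strategy, or carry out the integration by parts correctly and estimate $\int|u_{x_i}u_{x_j}S_k^{ij}[u]|\,dx$ with the exponents $\tilde p$, $\tilde q$ the paper uses for the weak Hessian --- but the latter proves well-definedness of a different functional, not of~\eqref{func0} as stated.
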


\begin{proof}
{\it Step 1. Case $N/2 < k \le N$.}

It is clear that if $u \in W^{2,k}_0(\Omega)$ then $S_k[u] \in L^1(\Omega)$ and also by Sobolev embedding we know that $u \in L^\infty(\Omega)$.
In this case we can invoke H\"{o}lder inequality to obtain the estimates
\begin{eqnarray}\nonumber
\int \left| u \, S_k[u] \right| dx &\le& \|S_k[u]\|_1 \, \|u\|_\infty, \\ \nonumber
\int \left| u \,f \right| dx &\le& \|f\|_1 \, \|u\|_\infty.
\end{eqnarray}
Both right hand sides are well defined as long as $W^{\alpha,2}_0(\Omega) \hookrightarrow W^{2,k}_0(\Omega)$. One can easily check that the exponent $\alpha$ in
the statement is the optimal one for this embedding to hold.

{\it Step 2. Case $2 \le k < N/2$.}

In this case the Sobolev embedding $W^{2,k}_0(\Omega) \hookrightarrow L^\infty(\Omega)$ no longer holds.
On the other hand we may invoke H\"{o}lder inequality again to get the estimate
$$
\int \left| u \, S_k[u] \right| dx \le \|S_k[u]\|_p \, \|u\|_q,
$$
for $p$ and $q$ such that $1/p + 1/q =1$. Now we look for the optimal $\alpha$ such that the string of embeddings
$W^{\alpha,2}_0(\Omega) \hookrightarrow W^{2,kp}_0(\Omega) \hookrightarrow L^q(\Omega)$ is fulfilled.
This is so for the choice $p=p^*:=N(k+1)/[k(N+2)]$, $q=q^*:=N(k+1)/(N-2k)$ and $\alpha$ as in the statement.
Once more, H\"{o}lder inequality leads to
$$
\int \left| u \, f \right| dx \le \|f\|_{p^*} \, \|u\|_{q^*}.
$$
The last right hand side is well defined assuming the hypotheses in the statement.

{\it Step 3. Case $k = N/2$.}

This borderline case differs from the first one in the fact that the Sobolev embedding $W^{N/2,2}_0(\Omega) \hookrightarrow W^{2,N/2}_0(\Omega)$ holds,
but the embedding $W^{2,N/2}_0(\Omega) \hookrightarrow L^{\infty}(\Omega)$ does not. Consequently we need to choose $\alpha=N/2+1$ in order to
have $u \in L^\infty(\Omega)$.

{\it Step 4.}

Noting that the quadratic term in~\eqref{func0} is always well defined by its very nature concludes the proof.
\end{proof}

The following series of remarks is in order.

\begin{remark}
The value of $\alpha$ satisfies the sharp bound $\alpha \ge 2$.
\end{remark}

\begin{remark}
Functional~\eqref{func0} can also be written as
\begin{equation}\nonumber
J[u]= \left\{ \begin{array}{lll} \int \left( -\lambda f u + \frac{1}{2} \left| \Delta ^{\alpha/2}
u \right|^2 - \frac{(-1)^k}{k+1} u S_k[u]\right) dx,
\qquad \mathrm{if} \quad \alpha \quad \mathrm{is} \,\,\, \mathrm{even}, \\ \\
\int \left( -\lambda f u + \frac{1}{2} \left| \Delta ^{ (\alpha-1)/2}
\nabla u \right|^2 - \frac{(-1)^k}{k+1} u S_k[u]\right) dx,
\qquad \mathrm{if} \quad \alpha \quad \mathrm{is} \,\,\, \mathrm{odd}.
\end{array} \right.
\end{equation}
\end{remark}

\begin{remark}
Case (a) takes place for any $N \ge 2$, case (b) takes place for any $N \ge 5$, and case (c) takes place for any even $N \ge 4$.
\end{remark}

\begin{remark}\label{endpointrem}
Case (c) is not a consequence of case (a) (resp. case (b)) when $k$ approaches its lower (resp. upper) limit.
\end{remark}

\begin{remark}
Exponent $p^*$ satisfies the sharp bounds $1 < p^* <3/2$.
\end{remark}

\begin{remark}
Lemma~\ref{definition} exhausts the possibilities for $k$ and $N$.
\end{remark}

\begin{remark}
In case (a) we can rewrite $\alpha$ as follows:
\begin{itemize}
\item $\alpha=(N+2)/2$ if $N$ is even.
\item If $N$ is odd then
\begin{itemize}
\item $\alpha= (N+1)/2$ if $k \le \lfloor 2N/3 \rfloor$,
\item $\alpha= (N+3)/2$ if $k \ge \lfloor 2N/3 +1 \rfloor$.
\end{itemize}
\end{itemize}
Equivalently this can be written in the following way:
\begin{itemize}
\item $\alpha= \lfloor N/2 + 1 \rfloor$ if $k \le \lfloor 2N/3 \rfloor$,
\item $\alpha= \lceil N/2 + 1 \rceil$ if $k \ge \lfloor 2N/3 +1 \rfloor$,
\end{itemize}
\end{remark}

\begin{remark}
A more general result is obtained in both cases (a) and (c) if we let $f \in W^{-\alpha,2}(\Omega) \supset L^1(\Omega)$ and interpret
$$
\int f \, u \, dx \equiv \left\langle f,u \right\rangle.
$$
\end{remark}

\begin{remark}
The values of $\alpha$ in lemma~\ref{definition} are optimal in the sense of Sobolev embeddings of Sobolev spaces into
other Sobolev or Lebesgue spaces. If other functional spaces are invoked, then improvements may be possible, see lemma~\ref{localhardy} below.
\end{remark}

As it has already been outlined in the previous remark, the marginal character of case (c) in lemma~\ref{definition} suggests that improvements
are possible. To put this intuitive observation on a precise ground we need to introduce
the Hardy space in $\mathbb{R}^N$~\cite{stein} and its local counterparts~\cite{der}.

\begin{definition}\label{hardyspace} Let $\Phi \in \mathcal{S}(\mathbb{R}^N)$ be a function such that $\int_{\mathbb{R}^N} \Phi \, dx =1$.
Define $\Phi_s := s^{-N} \Phi(x/s)$ for $s>0$. A locally integrable function $f$ is said to be in $\mathcal{H}^1(\mathbb{R}^N)$ if
the maximal function
$$
\mathcal{M}f(x):= \sup_{s>0} \left| \Phi_s \ast f(x) \right|
$$
belongs to $L^1(\mathbb{R}^N)$. We define the norm $\|f\|_{\mathcal{H}^1(\mathbb{R}^N)}=\|\mathcal{M} f\|_1$.
\end{definition}

\begin{remark}
There are several equivalent definitions of this space, see~\cite{steinb}.
\end{remark}

\begin{definition}
\label{localhardyspace} Let $\phi \in \mathcal{S}(\mathbb{R}^N)$ be a function such that $\int_{\mathbb{R}^N} \phi \, dx =1$.
Define $\phi_s := s^{-N} \phi(x/s)$ for $s>0$. A locally integrable function $f$ is said to be in $h^1(\mathbb{R}^N)$ if
the maximal function
$$
m f(x):= \sup_{0<s<1} \left| \phi_s \ast f(x) \right|
$$
belongs to $L^1(\mathbb{R}^N)$. We define the norm $\|f\|_{h^1(\mathbb{R}^N)}=\|m f\|_1$.
\end{definition}

\begin{definition}
We will denote as $h_r^1(\Omega)$ the space of locally integrable functions which are the restrictions to $\Omega$ of elements of $h^1(\mathbb{R}^N)$. This space is equipped with the quotient norm
$$
\|f\|_{h_r^1(\Omega)}= \inf_F \|F\|_{h^1(\mathbb{R}^N)},
$$
where the infimum is taken over all the functions $F \in h^1(\mathbb{R}^N)$ such that $\left. F \right|_\Omega = f$.
\end{definition}

\begin{remark}
For alternative characterizations of this space see~\cite{goldberg}.
\end{remark}

\begin{definition}
The space $h^1_z(\bar{\Omega})$ is defined to be the subspace of $h^1(\mathbb{R}^N)$ consisting of those elements which are supported
on $\bar{\Omega}$:
$$
h^1_z(\bar{\Omega}):=\{ f \in h^1(\mathbb{R}^N): f=0 \quad \mathrm{on} \quad \mathbb{R}^N \setminus \bar{\Omega} \}.
$$
We define the norm $\|f\|_{h^1_z(\bar{\Omega})}=\|f\|_{h^1(\mathbb{R}^N)}$.
\end{definition}

\begin{lemma}\label{hardyz}
For any $u \in W^{2,N/2}_0(\Omega)$ we have that $S_{N/2}[u] \in h_z^1(\bar{\Omega})$.
\end{lemma}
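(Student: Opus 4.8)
The plan is to show that $S_{N/2}[u]$, extended by zero to $\mathbb{R}^N$, lies in $\mathcal{H}^1(\mathbb{R}^N)$ and is supported in $\bar{\Omega}$; since the local maximal function is pointwise dominated by the global one we have $\mathcal{H}^1(\mathbb{R}^N)\hookrightarrow h^1(\mathbb{R}^N)$ with control of norms, and the lemma follows. First come two reductions. As $W^{2,N/2}_0(\Omega)$ is the closure of $C_c^\infty(\Omega)$ in $W^{2,N/2}$, extension by zero maps it continuously into $W^{2,N/2}(\mathbb{R}^N)$; from now on $u$ denotes such an extension, so that $u$ is compactly supported, $D^2 u\in L^{N/2}(\mathbb{R}^N)$, $D^2 u\equiv 0$ on $\mathbb{R}^N\setminus\bar{\Omega}$, and hence $S_{N/2}[u]$ vanishes off $\bar{\Omega}$. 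Moreover $\partial_j u\in W^{1,N/2}(\mathbb{R}^N)$ for each $j$, whence $\nabla(\partial_j u)\in L^{N/2}(\mathbb{R}^N)$; and each entry $S^{ij}_{N/2}(D^2 u)$, being a polynomial homogeneous of degree $N/2-1$ in the entries of $D^2 u$, belongs to $L^{N/(N-2)}(\mathbb{R}^N)$ by the generalized H\"older inequality, the exponents $N/(N-2)$ and $N/2$ being conjugate.

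The crux is the divergence structure of the $k$-Hessian. Euler's identity applied to the degree-$(N/2)$ homogeneous polynomial $\sigma_{N/2}$ gives, pointwise a.e., $S_{N/2}[u]=\frac{2}{N}\sum_{i,j}S^{ij}_{N/2}(D^2 u)\,u_{ij}$, and writing $u_{ij}=\partial_i(\partial_j u)$ this rearranges as
$$
S_{N/2}[u]=\frac{2}{N}\sum_{j=1}^{N}E_{(j)}\cdot\nabla(\partial_j u),\qquad E_{(j)}:=\bigl(S^{1j}_{N/2}(D^2 u),\dots,S^{Nj}_{N/2}(D^2 u)\bigr).
$$
Each $\nabla(\partial_j u)$ is a gradient in $L^{N/2}(\mathbb{R}^N)$, hence curl-free, while each $E_{(j)}$ lies in $L^{N/(N-2)}(\mathbb{R}^N)$ and is divergence-free: $\sum_i\partial_i S^{ij}_{N/2}(D^2 u)=0$ in $\mathcal{D}'(\mathbb{R}^N)$. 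For $u\in C_c^\infty$ this is the classical divergence-free (null-Lagrangian) identity for the Newton tensor associated with the Hessian (see~\cite{wang}); for $u\in W^{2,N/2}_0(\Omega)$ it follows by approximating $u$ by $C_c^\infty(\Omega)$ functions in $W^{2,N/2}$ and using that $A\mapsto S^{ij}_{N/2}(A)$ is continuous from $L^{N/2}$ into $L^{N/(N-2)}$ (a multilinear bound), so that the identity passes to the distributional limit.

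It then remains to invoke the div--curl lemma of Coifman, Lions, Meyer and Semmes in its bilinear form: if $E\in L^{p}(\mathbb{R}^N;\mathbb{R}^N)$ with $\operatorname{div}E=0$ and $\phi\in\dot{W}^{1,p'}(\mathbb{R}^N)$, then $E\cdot\nabla\phi\in\mathcal{H}^1(\mathbb{R}^N)$ with $\|E\cdot\nabla\phi\|_{\mathcal{H}^1}\le C\|E\|_{L^{p}}\|\nabla\phi\|_{L^{p'}}$. With $p=N/(N-2)$, $E=E_{(j)}$ and $\phi=\partial_j u$ this gives $E_{(j)}\cdot\nabla(\partial_j u)\in\mathcal{H}^1(\mathbb{R}^N)$, with norm controlled by $\|D^2 u\|_{L^{N/2}}^{N/2}$; summing over $j$ yields $S_{N/2}[u]\in\mathcal{H}^1(\mathbb{R}^N)$, and together with the support property and $\mathcal{H}^1(\mathbb{R}^N)\hookrightarrow h^1(\mathbb{R}^N)$ this places $S_{N/2}[u]$ in $h^1_z(\bar{\Omega})$. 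The main obstacle I anticipate is the careful justification of the divergence-free property at the borderline regularity $W^{2,N/2}$ --- i.e. verifying that $S^{ij}_{N/2}$ is multilinearly continuous from $L^{N/2}$ to $L^{N/(N-2)}$ so the smooth identity survives in the limit --- together with citing the form of the Coifman--Lions--Meyer--Semmes theorem that requires only $\operatorname{div}E=0$ (not that $E$ be globally a curl); the remaining estimates are routine.
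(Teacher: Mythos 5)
Your proof is correct and follows essentially the same route as the paper: extend by zero, establish membership of $S_{N/2}[\tilde u]$ in $\mathcal{H}^1(\mathbb{R}^N)$ via the compensated-compactness theory of Coifman--Lions--Meyer--Semmes and Grafakos, then use $\mathcal{H}^1(\mathbb{R}^N)\subset h^1(\mathbb{R}^N)$ together with the compact support to land in $h^1_z(\bar{\Omega})$. The only difference is that the paper outsources the $\mathcal{H}^1$ membership to the cited references, whereas you make the underlying mechanism explicit (Euler's identity plus the divergence-free Newton tensor plus the div--curl lemma), including the borderline-regularity justification of $\sum_i\partial_i S^{ij}_{N/2}(D^2u)=0$; this is a welcome amplification, not a different argument.
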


\begin{proof}
As $W^{2,N/2}_0(\Omega)$ is the closure of $C_0^\infty(\Omega)$ in $W^{2,N/2}(\Omega)$, we can extend $u \in W^{2,N/2}_0(\Omega)$ by zero
to find $\tilde{u} \in W^{2,N/2}(\mathbb{R}^N)$, where $\tilde{u}$ is the zero extension of $u$.
It is clear that $S_{N/2}[\tilde{u}]$ is well defined in $L^1(\mathbb{R}^N)$.
Now we claim
$$
\int_{\mathbb{R}^N} S_{N/2}[v] \, dx=0 \, \, \forall \, v \in C_0^\infty(\mathbb{R}^N).
$$
This follows from the divergence form of $S_k[v]=\frac{1}{k}\sum_{i,j} \partial_{x_i}(v_{x_j} S_k^{ij}[v])$ for all $v \in C_0^\infty(\Omega)$~\cite{wang}.
Then we may invoke the results in~\cite{grafakos1}
(see also~\cite{grafakos,coifman}) to get
$$
S_{N/2}[\tilde{u}] \in \mathcal{H}^1(\mathbb{R}^N).
$$
From definitions~\ref{hardyspace} and~\ref{localhardyspace} it is clear that $\mathcal{H}^1(\mathbb{R}^N) \subset h^1(\mathbb{R}^N)$, and since
$S_{N/2}[\tilde{u}]$ is compactly supported in $\bar{\Omega}$ the desired conclusion follows.
\end{proof}

Now we introduce the space of functions of bounded mean oscillation~\cite{steinb} and some local counterparts~\cite{der}.

\begin{definition}
A locally integrable function $f$ is said to be in $\mathrm{BMO}(\mathbb{R}^N)$ if the seminorm (or norm in the quotient space
of locally integrable functions modulo additive constants)
$$
\|f\|_{\mathrm{BMO}(\mathbb{R}^N)}:= \sup_Q \frac{1}{|Q|}\int_Q |f(x)-f_Q| \, dx,
$$
where $|Q|$ is the Lebesgue measure of $Q$, $f_Q=\frac{1}{|Q|}\int_Q f(x) \, dx$ and the supremum is taken
over the set of all cubes $Q \subset \mathbb{R}^N$, is finite.
\end{definition}

\begin{definition}\label{bmodef}
A locally integrable function $f$ is said to be in $\mathrm{bmo}(\mathbb{R}^N)$ if the norm
$$
\|f\|_{\mathrm{bmo}(\mathbb{R}^N)}:= \sup_{|Q|<1} \frac{1}{|Q|}\int_Q |f(x)-f_Q| \, dx + \sup_{|Q| \ge 1} \frac{1}{|Q|}\int_Q |f(x)| \, dx
$$
is finite. Here the suprema are taken over all cubes $Q \subset \mathbb{R}^N$ with sides parallel to the axes.
\end{definition}

\begin{definition}
A locally integrable function $f$ is said to be in $\mathrm{bmo}_r(\Omega)$ if the norm
$$
\|f\|_{\mathrm{bmo}_r(\Omega)}:= \sup_{|Q|<1} \frac{1}{|Q|}\int_Q |f(x)-f_Q| \, dx + \sup_{|Q| \ge 1} \frac{1}{|Q|}\int_Q |f(x)| \, dx
$$
is finite. Here the suprema are taken over all cubes $Q \subset \Omega$.
\end{definition}

\begin{definition}\label{bmozdef}
The space $\mathrm{bmo}_z(\bar{\Omega})$ is defined to be the subspace of $\mathrm{bmo}(\mathbb{R}^N)$ consisting of those elements which
are supported in $\bar{\Omega}$, with
$$\|f\|_{\mathrm{bmo}_z(\bar{\Omega})}=\|f\|_{\mathrm{bmo}(\mathbb{R}^N)}.$$
\end{definition}

Now we are ready to proof an extension of lemma~\ref{definition}.

\begin{lemma}\label{localhardy}
If $k=N/2$ and $f \in h_r^1(\Omega)$ then functional~\eqref{func0} is well defined in $W_0^{N/2,2}(\Omega)$.
\end{lemma}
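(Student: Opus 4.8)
The plan is to combine the previous two lemmas with the known duality between the local Hardy space $h^1$ and $\mathrm{bmo}$. First I would observe that, as in Step 3 of the proof of lemma~\ref{definition}, the Sobolev embeddings give $W_0^{N/2,2}(\Omega) \hookrightarrow W_0^{2,N/2}(\Omega)$, so for $u \in W_0^{N/2,2}(\Omega)$ the quantity $S_{N/2}[u]$ is well defined and, by lemma~\ref{hardyz}, belongs to $h_z^1(\bar{\Omega}) \subset h^1(\mathbb{R}^N)$. The quadratic term in~\eqref{func0} is again well defined by its very nature, so the only term requiring a new argument is the term $\int f \, u \, S_{N/2}[u] \, dx$ --- here, since $f \in h^1_r(\Omega)$ is no longer in $L^1$ nor is $u$ in $L^\infty$, the naive Hölder estimate of lemma~\ref{definition} fails and must be replaced by a Hardy--BMO pairing.

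The key step is therefore to show that $u \, S_{N/2}[u]$, or rather $f$ paired against it, makes sense through the $h^1$--$\mathrm{bmo}$ duality. Concretely, I would argue as follows. Extend $u$ by zero to $\tilde u \in W^{2,N/2}(\mathbb{R}^N)$; then $S_{N/2}[\tilde u] \in h^1(\mathbb{R}^N)$ with $\|S_{N/2}[\tilde u]\|_{h^1} \lesssim \|u\|_{W^{2,N/2}_0(\Omega)}^{N/2} \lesssim \|u\|_{W^{N/2,2}_0(\Omega)}^{N/2}$, the first inequality coming from the compensated-compactness estimate invoked in lemma~\ref{hardyz} and the second from Sobolev embedding. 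On the other hand, $u \in W_0^{N/2,2}(\Omega) \hookrightarrow W_0^{2,N/2}(\Omega)$ lies in $\mathrm{bmo}_r(\Omega)$ --- indeed, the borderline Sobolev embedding places $W^{2,N/2}$ into $\mathrm{bmo}$ (this is the John--Nirenberg endpoint of the Sobolev scale, the substitute for the failed embedding into $L^\infty$), with $\|u\|_{\mathrm{bmo}_r(\Omega)} \lesssim \|u\|_{W_0^{N/2,2}(\Omega)}$. Since $h^1_z(\bar\Omega)$ is the predual of $\mathrm{bmo}_r(\Omega)$ (this is precisely the local Hardy--BMO duality recorded in the cited references of~\cite{der,goldberg}), the product $u \, S_{N/2}[u]$ is a well-defined element of $L^1(\Omega)$: more carefully, since $f \in h^1_r(\Omega)$ and $u \in \mathrm{bmo}_r(\Omega)$, the pairing $\langle f, u \, \psi \rangle$ for test $\psi$ shows $f u \in h^1$ again, or one directly estimates $\int |f \, u \, S_{N/2}[u]| \lesssim \|f\|_{h^1_r(\Omega)} \, \|u \, S_{N/2}[u]\|_{\mathrm{bmo}}$, bounding the latter by $\|u\|_{\mathrm{bmo}} \|S_{N/2}[u]\|_{\mathrm{bmo}}$-type product estimates. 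Either way the term is finite, with a bound polynomial in $\|u\|_{W_0^{N/2,2}(\Omega)}$.

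The main obstacle I anticipate is getting the pairings into exactly the right spaces: one must be careful that it is the pair $(h^1_z(\bar\Omega), \mathrm{bmo}_r(\Omega))$ --- not the global $\mathbb{R}^N$ spaces --- that is in duality, which is why the definitions of $h^1_z$, $h^1_r$, $\mathrm{bmo}_r$ and $\mathrm{bmo}_z$ were all set up above; and one must justify that $S_{N/2}[u]$ genuinely lands in $h^1_z(\bar\Omega)$ (done in lemma~\ref{hardyz}) while $u$ lands in $\mathrm{bmo}_r(\Omega)$ (the endpoint Sobolev embedding $W^{2,N/2} \hookrightarrow \mathrm{bmo}$, which I would cite rather than reprove). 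A secondary subtlety is the linear term $\int f u \, dx$: with $f \in h^1_r(\Omega)$ and $u \in \mathrm{bmo}_r(\Omega)$ this is again interpreted via the same duality, so the argument is uniform. Assembling these observations --- quadratic term trivially fine, cubic term fine by $h^1$--$\mathrm{bmo}$ duality via lemma~\ref{hardyz} and the endpoint embedding, linear term fine by the same duality --- concludes that $J[u]$ is well defined on $W_0^{N/2,2}(\Omega)$, which is the claim.
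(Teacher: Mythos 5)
Your overall strategy is the paper's: embed $W_0^{N/2,2}(\Omega)\hookrightarrow W_0^{2,N/2}(\Omega)$, use lemma~\ref{hardyz} to place $S_{N/2}[u]$ in $h^1_z(\bar{\Omega})$, put the zero extension $\tilde u$ in $\mathrm{BMO}(\mathbb{R}^N)\cap L^2(\mathbb{R}^N)$ hence in $\mathrm{bmo}_z(\bar{\Omega})\subset\mathrm{bmo}_r(\Omega)$ by the endpoint Sobolev embedding, and conclude by the local Hardy--BMO dualities of~\cite{der}. However, the central computation you propose addresses a term that is not in the functional: \eqref{func0} contains the two separate terms $\int fu\,dx$ and $\int u\,S_{N/2}[u]\,dx$, not a single product $\int f\,u\,S_{N/2}[u]\,dx$. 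The estimate you offer for that phantom term, namely $\int |f\,u\,S_{N/2}[u]|\lesssim \|f\|_{h^1_r(\Omega)}\|u\,S_{N/2}[u]\|_{\mathrm{bmo}}$ followed by $\|u\,S_{N/2}[u]\|_{\mathrm{bmo}}\lesssim\|u\|_{\mathrm{bmo}}\|S_{N/2}[u]\|_{\mathrm{bmo}}$, would fail on two counts even if the term existed: $\mathrm{bmo}$ is not an algebra (pointwise products of BMO functions need not be BMO), and $S_{N/2}[u]$ lands in $h^1$, essentially the \emph{predual} of $\mathrm{bmo}$, so $\|S_{N/2}[u]\|_{\mathrm{bmo}}$ is not controlled. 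Fortunately none of this is needed: each of the two actual terms is handled by a single duality pairing, as your closing summary in fact indicates.

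A second, smaller point: you pair $f\in h^1_r(\Omega)$ against $u\in\mathrm{bmo}_r(\Omega)$, but $(h^1_r(\Omega),\mathrm{bmo}_r(\Omega))$ is not a dual pair. The relevant relations are $[h^1_r(\Omega)]^*=\mathrm{bmo}_z(\bar{\Omega})$ and $[h^1_z(\bar{\Omega})]^*=\mathrm{bmo}_r(\Omega)$. So the linear term is estimated by $\|f\|_{h^1_r(\Omega)}\,\|u\|_{\mathrm{bmo}_z(\bar{\Omega})}$ (using that $\tilde u\in\mathrm{BMO}(\mathbb{R}^N)\cap L^2(\mathbb{R}^N)$ is supported in $\bar\Omega$, hence in $\mathrm{bmo}_z(\bar{\Omega})$), while the nonlinear term is estimated by $\|S_{N/2}[u]\|_{h^1_z(\bar{\Omega})}\,\|u\|_{\mathrm{bmo}_r(\Omega)}$, using $\mathrm{bmo}_z(\bar{\Omega})\subset\mathrm{bmo}_r(\Omega)$. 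With the phantom cubic term removed and the two pairings assigned to the correct dual couples, your argument coincides with the paper's proof.
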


\begin{proof}
The embedding $W_0^{N/2,2}(\Omega) \hookrightarrow W_0^{2,N/2}(\Omega)$ and lemma~\ref{hardyz} ensure that $S_{N/2}[u] \in h^1_z(\bar{\Omega})$.
Moreover, we can argue as in the proof of lemma~\ref{hardyz} to show that the zero extension of $u$, $\tilde{u} \in W^{N/2,2}(\mathbb{R}^N)$,
whenever $u \in W_0^{N/2,2}(\Omega)$. Therefore by Sobolev embedding we
get $\tilde{u} \in \mathrm{BMO}(\mathbb{R}^N) \cap L^2(\mathbb{R}^N)$. From here, after recalling definitions~\ref{bmodef} and~\ref{bmozdef},
it is clear that $\tilde{u} \in \mathrm{bmo}_z(\bar{\Omega})$.
Now from~\cite{der} we know that $\mathrm{bmo}_z(\bar{\Omega}) \subset \mathrm{bmo}_r(\Omega)$ and that the following duality relations hold:
$$
[h^1_r(\Omega)]^* = \mathrm{bmo}_z(\bar{\Omega}), \qquad [h^1_z(\bar{\Omega})]^* = \mathrm{bmo}_r(\Omega).
$$
Therefore the proof concludes with the following inequalities obtained by duality:
\begin{eqnarray}\nonumber
\int u \, S_k[u] \, dx &\le& \|S_k[u]\|_{h^1_z(\bar{\Omega})} \, \|u\|_{\text{bmo}_r(\Omega)}, \\ \nonumber
\int u \,f \, dx &\le& \|f\|_{h^1_r(\Omega)} \, \|u\|_{\text{bmo}_z(\bar{\Omega})}.
\end{eqnarray}
\end{proof}

\begin{remark}
Note that strictly speaking lemma~\ref{localhardy} is not an improvement of case (c) in lemma~\ref{definition},
as we are asking for a higher regular $f$.
\end{remark}

\begin{remark}
Considered as subspaces of $L^1(\Omega)$, we have $h^1_z(\bar{\Omega}) \subset h^1_r(\Omega)$, where the inclusion is strict~\cite{der}, so
the statement of lemma~\ref{localhardy} would remain true if we substituted the assumption $f \in h^1_r(\Omega)$ by $f \in h^1_z(\bar{\Omega})$,
but the result would be less general.
\end{remark}

Now we have all the ingredients to prove the following

\begin{proposition}\label{eulag}
Let
\begin{itemize}
\item[(i)] $\alpha= \left\lceil 2+ \frac{k-2}{2k}N \right\rceil$, $f \in L^1(\Omega)$ for $N/2 < k \le N$,
\item[(ii)] $\alpha= \left\lceil \frac{Nk-N+4k}{2k+2} \right\rceil$, $f \in L^{p^*}(\Omega)$ for $2 \le k < N/2$, and
\item[(iii)] $\alpha= N/2$, $f \in h^1_r(\Omega)$ for $k = N/2$.
\end{itemize}
Then the critical points of functional~\eqref{func0} are solutions to boundary value problem~\eqref{dirichlet}.
\end{proposition}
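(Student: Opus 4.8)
The plan is to verify that the functional $J$ defined in \eqref{func0} is Fr\'echet differentiable on $W^{\alpha,2}_0(\Omega)$ and to compute its Gateaux derivative, showing that the Euler--Lagrange equation associated to $\langle J'[u], \varphi \rangle = 0$ for all $\varphi \in W^{\alpha,2}_0(\Omega)$ coincides with the weak formulation of \eqref{dirichlet}. Observe first that, in all three cases, the exponent $\alpha$ is exactly one less than the corresponding $\alpha$ appearing in lemma~\ref{definition} (cases (a), (b), (c) there versus (i), (ii), (iii) here): in case (iii) we use $\alpha = N/2$ and rely on lemma~\ref{localhardy} rather than on step 3 of lemma~\ref{definition}, so that $J$ is indeed well defined on $W^{\alpha,2}_0(\Omega)$ with these lower values. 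This is the reason the hypotheses are stated with the present $\alpha$.

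The main computation splits $J$ into three pieces. The linear term $u \mapsto -\lambda \int f u\, dx$ is bounded linear (by H\"older, or by the duality pairing in case (iii)), hence its derivative is just $-\lambda \int f \varphi\, dx = -\lambda \langle f, \varphi\rangle$. The quadratic term is a bounded symmetric bilinear form on $W^{\alpha,2}_0(\Omega)$ whose derivative produces, after integration by parts and use of the boundary conditions $u = \partial_n u = \cdots = \partial_n^{\alpha-1} u = 0$, the term $\int \varphi \, (-1)^\alpha \Delta^\alpha u\, dx$ in the distributional sense. The delicate piece is the Hessian term $\Psi[u] := -\frac{(-1)^k}{k+1}\int u\, S_k[u]\, dx$. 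Here I would use the key algebraic identity (already cited from~\cite{wang} in the proof of lemma~\ref{hardyz}) that $S_k[u] = \frac{1}{k}\sum_{i,j}\partial_{x_i}\big(u_{x_j} S_k^{ij}[u]\big)$ in divergence form, together with the fact that $S_k^{ij}$ is divergence-free in $i$, i.e. $\sum_i \partial_{x_i} S_k^{ij}[u] = 0$. These two facts give the ``integration by parts'' rule $\int u\, S_k[u]\, dx = \frac{1}{k+1}\int u\, S_k[u]\, dx + \frac{k}{k+1}\int u\, S_k[u]\, dx$ balanced so that differentiating $\Psi$ in the direction $\varphi$ yields exactly $-(-1)^k \int \varphi\, S_k[u]\, dx$ — the factor $k+1$ in the denominator of \eqref{func0} is chosen precisely so that the $(k+1)$ terms arising from differentiating the degree-$(k+1)$ form $u \mapsto \int u S_k[u]$ all coincide after integration by parts. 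Thus $\langle J'[u],\varphi\rangle = \int \varphi\,\big[(-1)^\alpha \Delta^\alpha u - (-1)^k S_k[u] - \lambda f\big]\, dx$ for all $\varphi \in C_0^\infty(\Omega)$, whence a critical point is a (distributional, hence by elliptic regularity classical where the data permit) solution of \eqref{dirichlet}.

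The hardest part will be justifying the differentiability of $\Psi$ rigorously in cases (ii) and (iii), where $u$ need not be bounded: one must show that the multilinear map $(u,\ldots,u,\varphi) \mapsto \int \varphi\, S_k(D^2 u,\ldots)\, dx$ is continuous on the relevant product of Sobolev spaces, which amounts to exactly the H\"older/duality estimates established in lemmas~\ref{definition} and~\ref{localhardy} applied with one slot replaced by $\varphi$; since the polarized form $S_k^{ij}$ has the same homogeneity as $S_k$, the same exponents $p^*, q^*$ (resp. the $h^1$--$\mathrm{bmo}$ duality) control it, and the difference quotients converge by dominated convergence. I would also remark that the boundary conditions are encoded in the choice of the space $W^{\alpha,2}_0(\Omega)$, so that a critical point automatically satisfies $u = \partial_n u = \cdots = \partial_n^{\alpha-1}u = 0$ on $\partial\Omega$ in the trace sense, completing the identification with \eqref{dirichlet}.
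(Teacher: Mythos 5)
Your proposal follows essentially the same route as the paper: compute the Gateaux derivative $\left.\frac{d}{dt}J[v+tw]\right|_{t=0}$, use the divergence structure of $S_k$ (equivalently, the properties of $S_k^{ij}$) to integrate by parts so that the $k+1$ terms produced by differentiating the degree-$(k+1)$ form $\int u\,S_k[u]\,dx$ all collapse onto $\int w\,S_k[v]\,dx$, and conclude by a density argument, with well-definedness supplied by lemmas~\ref{definition} and~\ref{localhardy}. One small correction to an aside: the values of $\alpha$ in cases (i) and (ii) are \emph{identical} to those in cases (a) and (b) of lemma~\ref{definition}, not one less; only case (iii) lowers $\alpha$ from $N/2+1$ to $N/2$, which is precisely what lemma~\ref{localhardy} is for --- but this misstatement does not affect the argument.
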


\begin{proof}
By virtue of lemmas~\ref{definition} and~\ref{localhardy} functional~\eqref{func0} is well defined in all three cases.
Now let $v,w \in C^\infty_0(\Omega)$, $t \in \mathbb{R}_+$ and consider $J[v + t w]$
which is well defined as a simple corollary of the previous affirmation.
We find
\begin{eqnarray}\nonumber
& & \left. \frac{d}{dt} J[v + t w] \right|_{t=0}
\\ \nonumber
&=& \int \left\{ -\lambda f w + \left[ \Delta^{\lfloor \alpha/2 \rfloor}
\nabla^{2\left(\alpha/2-\lfloor \alpha/2 \rfloor \right)} v \right]
\left[ \Delta ^{\lfloor \alpha/2 \rfloor}
\nabla^{2\left(\alpha/2-\lfloor \alpha/2 \rfloor \right)} w \right] \right.
\\ \nonumber & & \left.
- \frac{(-1)^k}{k+1} w S_k[v] - \frac{(-1)^k}{k+1} v \left. \frac{d}{dt} S_k[v+ t w] \right|_{t=0} \right\} dx
\\ \nonumber
&=& \int \left\{ -\lambda f w + (-1)^\alpha \Delta^\alpha v \, w - \frac{(-1)^k}{k+1} w S_k[v] \right.
\\ \nonumber & &
- \frac{(-1)^k}{k+1} v
\sum_{i_1<\cdots<i_k} \partial_{D^2 v} \Lambda_{i_1}(D^2 v):D^2 w \cdots \Lambda_{i_k}(D^2 v)
\\ \nonumber & &
- \frac{(-1)^k}{k+1} v
\sum_{j=2}^{k-1} \sum_{i_1<\cdots<i_k} \Lambda_{i_1}(D^2 v) \cdots \partial_{D^2 v} \Lambda_{i_j}(D^2 v) :D^2 w \cdots \Lambda_{i_k}(D^2 v)
\\ \nonumber & & \left.
- \frac{(-1)^k}{k+1} v
\sum_{i_1<\cdots<i_k} \Lambda_{i_1}(D^2 v) \cdots \partial_{D^2 v} \Lambda_{i_k}(D^2 v) :D^2 w \right\} dx
\\ \nonumber
&=& \int \left\{ -\lambda f w + (-1)^\alpha \Delta^\alpha v \, w - \frac{(-1)^k}{k+1} w S_k[v] \right.
\\ \nonumber & & \left.
- \frac{(-1)^k}{k+1} \, v \,\,
\partial_{D^2 v} S_k[v] :D^2 w \right\} dx
\\ \nonumber
&=& \int \left\{ -\lambda f + (-1)^\alpha \Delta^\alpha v - (-1)^k S_k[v] \right\} w \, dx,
\end{eqnarray}
where $\partial_{D^2 v}(\cdot)$ is the matrix which entries are $\partial_{v_{ij}}(\cdot)$,
after repeatedly integrating by parts and using the properties of $S^{ij}_k(D^2 u)$.
By a density argument we can take $v,w \in W^{\alpha,2}_0(\Omega)$ to conclude.
\end{proof}

\begin{remark}
Note that the values of $\alpha$ present in the hypotheses of proposition~\ref{eulag} fulfill
\begin{equation}\label{2lalfa}
\alpha= \left\{ \begin{array}{c}
\left\lceil 2+ \frac{k-2}{2k}N \right\rceil \quad \text{for} \quad N/2 \le k \le N \\ \\
\left\lceil \frac{Nk-N+4k}{2k+2} \right\rceil \quad \text{for} \quad 2 \le k \le N/2
\end{array} \right. .
\end{equation}
In particular, the endpoint value $k=N/2$ coincides in both cases and also with the marginal case (iii), contrary
to what happened in lemma~\ref{definition}, see remark~\ref{endpointrem}. Also, note that this is the only
value of $\alpha$ for which both lines of~\eqref{2lalfa} could coincide.
\end{remark}

\begin{remark}
It is an easy exercise to show that the critical points of functional~\eqref{func0} are not solutions to boundary value problem~\eqref{navier},
see~\cite{n5} for a particular example of this fact.
\end{remark}

\subsection{Geometry of $J[u]$}
\label{geometryj}

This section is devoted to prove that the geometry of functional $J[u]$ corresponds to the {\it mountain pass} one~\cite{ar}.
This will subsequently allow us to prove the existence of at least two solutions to boundary value problem~\eqref{dirichlet}.

\begin{proposition}\label{downbound}
Functional~\eqref{func0} admits the following lower radial estimate in the Sobolev space $W_0^{\alpha,2}(\Omega)$
\begin{eqnarray}\nonumber
G[u] &=& \frac{1}{2} \, \| \Delta ^{\lfloor \alpha/2 \rfloor}\nabla^{2\left(\alpha/2-\lfloor \alpha/2 \rfloor \right)} u \|_2^2
- C_1 \, \|\Delta ^{\lfloor \alpha/2 \rfloor}\nabla^{2\left(\alpha/2-\lfloor \alpha/2 \rfloor \right)} u \|_2 \\ \nonumber & &
- C_2 \, \| \Delta ^{\lfloor \alpha/2 \rfloor}\nabla^{2\left(\alpha/2-\lfloor \alpha/2 \rfloor \right)} u \|_2^{k+1},
\end{eqnarray}
i.~e., $J[u] \ge G[u] \,\, \forall \, u \in W_0^{\alpha,2}(\Omega)$ for suitable positive constants $C_1$ and $C_2$.
\end{proposition}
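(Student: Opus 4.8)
The plan is to bound each of the three terms in the integrand of $J[u]$ from below by the corresponding term in $G[u]$, using only the norm $\|\Delta^{\lfloor\alpha/2\rfloor}\nabla^{2(\alpha/2-\lfloor\alpha/2\rfloor)}u\|_2$, which is an equivalent norm on $W_0^{\alpha,2}(\Omega)$ by the Poincaré inequality and elliptic regularity. The quadratic term $\frac12\|\Delta^{\lfloor\alpha/2\rfloor}\nabla^{2(\alpha/2-\lfloor\alpha/2\rfloor)}u\|_2^2$ is already in the desired form, so nothing is needed there. The only work is in estimating the two other contributions, $-\lambda\int fu\,dx$ and $-\frac{(-1)^k}{k+1}\int u\,S_k[u]\,dx$, from below.

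For the linear term, I would split into the three cases of Lemma~\ref{definition}/Proposition~\ref{eulag}. In case (i) ($N/2<k\le N$) I bound $|\int fu\,dx|\le\|f\|_1\|u\|_\infty$ and use the Sobolev embedding $W_0^{\alpha,2}(\Omega)\hookrightarrow L^\infty(\Omega)$ to get $|\int fu\,dx|\le C\|u\|_{W_0^{\alpha,2}}\le C'\|\Delta^{\lfloor\alpha/2\rfloor}\nabla^{2(\alpha/2-\lfloor\alpha/2\rfloor)}u\|_2$; multiplying by $|\lambda|$ folds into the constant $C_1$. In case (ii) one uses Hölder with exponents $p^*,q^*$ together with the embedding $W_0^{\alpha,2}(\Omega)\hookrightarrow L^{q^*}(\Omega)$, and in case (iii) the $h^1_r$–$\mathrm{bmo}_z$ duality of Lemma~\ref{localhardy}; in every case one arrives at $|\lambda\int fu\,dx|\le C_1\|\Delta^{\lfloor\alpha/2\rfloor}\nabla^{2(\alpha/2-\lfloor\alpha/2\rfloor)}u\|_2$, which gives the $-C_1$ term.

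For the Hessian term, the point is that $S_k[u]$ is $k$-homogeneous in the second derivatives, so $|u\,S_k[u]|$ scales like the $(k+1)$-st power of a second-derivative norm. Concretely, $|\int u\,S_k[u]\,dx|\le\|S_k[u]\|_p\|u\|_q$ with the exponents chosen in Lemma~\ref{definition}; since each entry of $D^2u$ contributes, $\|S_k[u]\|_p\le C\|D^2u\|_{kp}^k$ (by the multilinearity of $\sigma_k$ and Hölder), and then the embeddings $W_0^{\alpha,2}(\Omega)\hookrightarrow W_0^{2,kp}(\Omega)$ and $W_0^{\alpha,2}(\Omega)\hookrightarrow L^q(\Omega)$ — which hold precisely for the stated $\alpha$ — convert both factors into powers of $\|\Delta^{\lfloor\alpha/2\rfloor}\nabla^{2(\alpha/2-\lfloor\alpha/2\rfloor)}u\|_2$, the total power being $k+1$. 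This yields $\big|\frac{(-1)^k}{k+1}\int u\,S_k[u]\,dx\big|\le C_2\|\Delta^{\lfloor\alpha/2\rfloor}\nabla^{2(\alpha/2-\lfloor\alpha/2\rfloor)}u\|_2^{k+1}$, which is the $-C_2$ term, and in case (iii) the same estimate follows from the $h^1_z$–$\mathrm{bmo}_r$ duality. Collecting the three bounds gives $J[u]\ge G[u]$ for all $u\in W_0^{\alpha,2}(\Omega)$.

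The main obstacle is bookkeeping rather than conceptual: one must verify in each of the three regimes of $k$ that the chosen $\alpha$ makes every Sobolev embedding used actually valid (this is exactly the optimality already recorded in Lemma~\ref{definition}), and keep track of the interpolation/Hölder exponents so that the exponent on the norm in the nonlinear term comes out to be exactly $k+1$ and not something larger. A secondary minor point is to note that $\|\Delta^{\lfloor\alpha/2\rfloor}\nabla^{2(\alpha/2-\lfloor\alpha/2\rfloor)}u\|_2$ is indeed an equivalent norm on $W_0^{\alpha,2}(\Omega)$, which is standard but should be invoked explicitly so that all the embedding constants can be expressed in terms of this single quantity.
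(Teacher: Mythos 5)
Your proposal is correct and follows essentially the same route as the paper's own proof: the same case split according to the three regimes of $k$, the same H\"older (or $h^1$--$\mathrm{bmo}$ duality) estimates for the linear and Hessian terms, and the same Sobolev embeddings to convert everything into powers $1$ and $k+1$ of $\|\Delta^{\lfloor\alpha/2\rfloor}\nabla^{2(\alpha/2-\lfloor\alpha/2\rfloor)}u\|_2$. The only addition is your explicit remark on norm equivalence, which the paper leaves implicit.
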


\begin{proof}

{\it Step 1. Case $N/2 < k \le N$.}

We have the string of inequalities
\begin{eqnarray} \nonumber
J[u] &\ge& \frac{1}{2} \int \left| \Delta ^{\lfloor \alpha/2 \rfloor}
\nabla^{2\left(\alpha/2-\lfloor \alpha/2 \rfloor \right)} u \right|^2 dx
\\ \nonumber
& & - \left|\lambda\right| \|f\|_1 \|u\|_\infty - \frac{1}{k+1} \|u\|_\infty \|S_k[u]\|_1
\\ \nonumber
&\ge& \frac{1}{2} \int \left| \Delta ^{\lfloor \alpha/2 \rfloor}
\nabla^{2\left(\alpha/2-\lfloor \alpha/2 \rfloor \right)} u \right|^2 dx
\\ \nonumber
& & - C_1 \left|\lambda\right| \|f\|_1 \left( \int \left| \Delta ^{\lfloor \alpha/2 \rfloor}
\nabla^{2\left(\alpha/2-\lfloor \alpha/2 \rfloor \right)} u \right|^2 dx \right)^{1/2}
\\ \nonumber
& & - C_2 \left( \int \left| \Delta ^{\lfloor \alpha/2 \rfloor}
\nabla^{2\left(\alpha/2-\lfloor \alpha/2 \rfloor \right)} u \right|^2 dx\right)^{(k+1)/2},
\end{eqnarray}
where we have used two H\"{o}lder inequalities in the first step and two Sobolev embeddings in the second.
The validity of all of them can be checked in the proof of lemma~\ref{definition}.

{\it Step 2. Case $2 \le k < N/2$.}

The corresponding calculation in this case yields
\begin{eqnarray} \nonumber
J[u] &\ge& \frac{1}{2} \int \left| \Delta ^{\lfloor \alpha/2 \rfloor}
\nabla^{2\left(\alpha/2-\lfloor \alpha/2 \rfloor \right)} u \right|^2 dx
\\ \nonumber
& & - \left|\lambda\right| \|f\|_{p^*} \|u\|_{q^*} - \frac{1}{k+1} \|u\|_{q^*} \|S_k[u]\|_{p^*}
\\ \nonumber
&\ge& \frac{1}{2} \int \left| \Delta ^{\lfloor \alpha/2 \rfloor}
\nabla^{2\left(\alpha/2-\lfloor \alpha/2 \rfloor \right)} u \right|^2 dx
\\ \nonumber
& & - C_1 \left|\lambda\right| \|f\|_{p^*} \left( \int \left| \Delta ^{\lfloor \alpha/2 \rfloor}
\nabla^{2\left(\alpha/2-\lfloor \alpha/2 \rfloor \right)} u \right|^2 dx \right)^{1/2}
\\ \nonumber
& & - C_2 \left( \int \left| \Delta ^{\lfloor \alpha/2 \rfloor}
\nabla^{2\left(\alpha/2-\lfloor \alpha/2 \rfloor \right)} u \right|^2 dx\right)^{(k+1)/2}.
\end{eqnarray}
Again, the validity of the H\"{o}lder inequalities in the first step and of the Sobolev embeddings in the
second can be checked in the proof of lemma~\ref{definition}.

{\it Step 3. Case $k = N/2$.}

Finally, in the critical case $k = N/2$ we find
\begin{eqnarray} \nonumber
J[u] &\ge& \frac{1}{2} \int \left| \Delta ^{\lfloor \alpha/2 \rfloor}
\nabla^{2\left(\alpha/2-\lfloor \alpha/2 \rfloor \right)} u \right|^2 dx
\\ \nonumber
& & - \left|\lambda\right| \|f\|_{h^1_r(\Omega)} \|u\|_{\mathrm{bmo}_z(\bar{\Omega})} - \frac{1}{k+1} \|u\|_{\mathrm{bmo}_r(\Omega)} \|S_k[u]\|_{h^1_z(\bar{\Omega})}
\\ \nonumber
&\ge& \frac{1}{2} \int \left| \Delta ^{\lfloor \alpha/2 \rfloor}
\nabla^{2\left(\alpha/2-\lfloor \alpha/2 \rfloor \right)} u \right|^2 dx
\\ \nonumber
& & - C_1 \left|\lambda\right| \|f\|_{h^1_r(\Omega)} \left( \int \left| \Delta ^{\lfloor \alpha/2 \rfloor}
\nabla^{2\left(\alpha/2-\lfloor \alpha/2 \rfloor \right)} u \right|^2 dx \right)^{1/2}
\\ \nonumber
& & - C_2 \left( \int \left| \Delta ^{\lfloor \alpha/2 \rfloor}
\nabla^{2\left(\alpha/2-\lfloor \alpha/2 \rfloor \right)} u \right|^2 dx\right)^{(k+1)/2},
\end{eqnarray}
where we have used, in the first step, the inequalities obtained by duality in the proof of lemma~\ref{localhardy} and, in the second step,
the inclusions $W_0^{\alpha,2}(\Omega) \subset \mathrm{bmo}_z(\bar{\Omega})$ and
$W_0^{\alpha,2}(\Omega) \subset \mathrm{bmo}_r(\Omega)$ from the proof of lemma~\ref{localhardy} and
the implication $u \in W_0^{\alpha,2}(\Omega) \Rightarrow S_k[u] \in h^1_z(\bar{\Omega})$, from the Sobolev embedding at the
beginning of the proof of lemma~\ref{localhardy} and the statement of lemma~\ref{hardyz}.
\end{proof}

\begin{lemma}\label{lembound}
There exist functions $\varphi,\psi \in W^{\alpha,2}_0(\Omega)$ such that
\begin{itemize}
\item $\lambda \int f \, \varphi \, dx > 0$,
\item $(-1)^k \int \psi \, S_k[\psi] \, dx > 0$.
\end{itemize}
\end{lemma}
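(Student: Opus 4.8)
The plan is to produce the two functions $\varphi$ and $\psi$ essentially by hand, exploiting the fact that $W^{\alpha,2}_0(\Omega) \hookrightarrow W^{2,k}_0(\Omega)$ (and in fact into $C^2_0$-type spaces in the relevant ranges) so that any sufficiently smooth compactly supported test function is admissible.

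\textbf{Construction of $\varphi$.} First I would note that $f$ is a nontrivial element of its function space (if $f \equiv 0$ the case $\lambda = 0$ is the only one relevant and the statement about $\varphi$ is vacuous, or one may simply take $\lambda = 0$ throughout the theorem). Assuming $f \not\equiv 0$, there is a smooth compactly supported $\phi_0 \in C^\infty_0(\Omega)$ with $\int f \phi_0 \, dx = c \neq 0$; this follows from the density of $C^\infty_0(\Omega)$ in the predual of whatever space $f$ lives in (for $f \in L^1$ one uses $L^\infty$ approximation, for $f \in L^{p^*}$ one uses $L^{q^*}$, and in the Hardy-space case $f \in h^1_r(\Omega)$ one pairs against $C^\infty_0 \subset \mathrm{bmo}$, using that $C^\infty_0(\Omega)$ is not annihilated by a nonzero functional). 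Then set $\varphi := \mathrm{sgn}(\lambda c)\,\phi_0$ when $\lambda \neq 0$, so that $\lambda \int f \varphi \, dx = |\lambda c| > 0$; if $\lambda = 0$ this bullet is trivially unneeded. Since $\phi_0 \in C^\infty_0(\Omega) \subset W^{\alpha,2}_0(\Omega)$ for every $\alpha$, we are done with the first item.

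\textbf{Construction of $\psi$.} Here I would exhibit an explicit radial bump. Take a ball $B_\rho(x_0) \subset \Omega$ and a function of the form $\psi(x) = \eta(|x - x_0|^2)$ with $\eta$ smooth, so that near the center $D^2\psi$ is a negative (or positive) multiple of the identity: concretely $\psi(x) = -(\rho^2 - |x-x_0|^2)_+^m$ for $m$ large enough that $\psi \in C^\infty_0$-smooth to the required order, which gives $D^2\psi = 2m(\rho^2 - |x-x_0|^2)^{m-1}\,\mathrm{Id} + \text{(lower order near center)}$, a positive definite matrix on a neighborhood of $x_0$. On that neighborhood all eigenvalues $\Lambda_i(D^2\psi)$ are positive, hence $S_k[\psi] = \sigma_k(\Lambda) > 0$ there and $(-1)^k \psi S_k[\psi] > 0$ pointwise (since $\psi < 0$ there and we carry the factor $(-1)^k$ — one must check the sign bookkeeping: with $\psi$ concave-bump negative, $D^2\psi$ positive definite, choose instead $\psi$ a standard positive concave bump so $D^2 \psi$ is negative definite, $\Lambda_i < 0$, $\sigma_k(\Lambda) = (-1)^k|\sigma_k|$, and $\psi > 0$, giving $(-1)^k\psi S_k[\psi] = \psi|\sigma_k| > 0$; I will pick whichever sign convention makes the inequality come out right). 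The genuine point to handle is that $D^2\psi$ need not stay definite away from the center, so $S_k[\psi]$ could be negative on an annulus and the integral's sign is a priori unclear. The fix is a scaling/localization argument: replace $\psi$ by $\psi_\epsilon(x) = \psi(x_0 + (x - x_0)/\epsilon)\cdot\chi$ supported in a tiny ball, or more simply observe that by choosing the profile $\eta$ so that $\psi$ agrees with a pure quadratic $-\tfrac12|x-x_0|^2$ on a large fraction of its support and is glued to zero only on a thin shell, the "bad" contribution from the shell is made negligible compared to the main positive term; one then fixes the gluing parameter so that $(-1)^k\int \psi S_k[\psi]\,dx > 0$. Alternatively, and more cleanly, I would use a well-known identity: for $k$-admissible $\psi$ (which a concave bump is, near its maximum, after truncation the relevant cone condition can be arranged), $(-1)^k\int \psi S_k[\psi]\,dx$ has a fixed sign; but to keep the proof self-contained the explicit-bump-plus-shell estimate is safest.

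\textbf{Main obstacle.} The only real difficulty is the sign control of $\int \psi\, S_k[\psi]\,dx$: unlike the linear term, $S_k[\psi]$ is not sign-definite for an arbitrary bump, and integration by parts alone (the divergence-form identity $S_k[\psi] = \tfrac1k \sum_{ij}\partial_i(\psi_j S_k^{ij}[\psi])$ used earlier) rewrites the integral as $-\tfrac1k\int \sum_{ij}\psi_i\psi_j S_k^{ij}[\psi]\,dx$, i.e.\ as $-\tfrac1k\int \langle S_k^{ij}[\psi]\,\nabla\psi, \nabla\psi\rangle$, which is sign-definite precisely when the matrix $(S_k^{ij}[\psi])$ is definite — and for a $k$-admissible function it is (positive) semidefinite on the relevant cone. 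So the clean route is: choose $\psi$ to be $k$-admissible with $\nabla\psi \not\equiv 0$ (a truncated concave paraboloid works, possibly after mollification to restore smoothness at the truncation), invoke the divergence identity from \cite{wang} together with the definiteness of $S_k^{ij}$ on $\bar\gimel_k$, and conclude $(-1)^k\int\psi S_k[\psi]\,dx = (-1)^{k+1}\tfrac1k\int \langle S_k^{ij}\nabla\psi,\nabla\psi\rangle\,dx$ has the asserted strict sign, with strictness because $\nabla \psi$ is nonzero on a set of positive measure where $S_k^{ij}$ is strictly definite. This keeps everything within results already cited in the paper.
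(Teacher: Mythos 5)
Your construction of $\varphi$ is fine and is essentially the paper's (the paper takes a mollification of $\lambda f$; both arguments tacitly assume $\lambda\neq 0$ and $f\not\equiv 0$). The gap is in the construction of $\psi$. Your preferred ``clean route'' rests on choosing a $k$-admissible $\psi$ with compact support and invoking the positive semidefiniteness of $(S_k^{ij}[\psi])$ on $\bar{\gimel}_k$. But no nontrivial compactly supported function is $k$-admissible: admissibility already requires $\sigma_1=\Delta\psi\ge 0$ everywhere, while $\int_\Omega \Delta\psi\,dx=0$ for $\psi\in C_0^\infty(\Omega)$, forcing $\Delta\psi\equiv 0$ and hence $\psi\equiv 0$. (The same obstruction is exactly why every bump must have an annulus where the Hessian loses definiteness --- the difficulty you correctly identified.) So definiteness of $(S_k^{ij}[\psi])$ cannot be obtained from admissibility, and the sign of $-\tfrac1k\int\langle S_k^{ij}[\psi]\nabla\psi,\nabla\psi\rangle\,dx$ is not established. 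Your two fallbacks --- ``fix the gluing parameter so that the integral is positive'' and ``pick whichever sign convention makes the inequality come out right'' --- assert rather than prove precisely the content of the lemma.

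The repair is one explicit computation away from what you wrote, and it recovers the paper's choice of a nonnegative, radially decreasing bump $\bar\psi$ supported in a ball. For a radial function the matrix $(S_k^{ij})$ is diagonal in polar-adapted coordinates, with the entry in the radial direction equal to $\sigma_{k-1}$ of the $N-1$ tangential curvatures $\bar\psi'/r$; hence
\[
\sum_{i,j}\bar\psi_i\bar\psi_j S_k^{ij}[\bar\psi]
=(\bar\psi')^2\,\sigma_{k-1}\Bigl(\underbrace{\tfrac{\bar\psi'}{r},\dots,\tfrac{\bar\psi'}{r}}_{N-1}\Bigr)
=\binom{N-1}{k-1}\frac{(\bar\psi')^{\,k+1}}{r^{\,k-1}},
\]
and the divergence identity gives
\[
(-1)^k\int\bar\psi\,S_k[\bar\psi]\,dx
=\frac{(-1)^{k+1}}{k}\binom{N-1}{k-1}\int\frac{(\bar\psi')^{\,k+1}}{r^{\,k-1}}\,dx
=\frac1k\binom{N-1}{k-1}\int\frac{|\bar\psi'|^{\,k+1}}{r^{\,k-1}}\,dx>0,
\]
since $\bar\psi'\le 0$ and $\bar\psi'\not\equiv 0$. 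Thus $\psi=\bar\psi$ works for every $k$, with no case distinction, no admissibility, and no shell estimate: what controls the sign is not definiteness of $(S_k^{ij})$ but the fact that the quadratic form only tests the radial direction, where all $N-1$ relevant eigenvalues $\bar\psi'/r$ have one sign by monotonicity. This computation also settles the sign bookkeeping that your proposal leaves open (and that is delicate in the paper's own two-claim argument as well: near a nondegenerate maximum $S_k[\bar\psi]$ has sign $(-1)^k$, not a fixed sign, so local negativity of $S_k$ cannot be used uniformly in $k$).
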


\begin{proof}
For the function $\varphi$ we can choose a suitable mollification of $f$ times $\lambda$.
In order to find a suitable $\psi$ lets consider a ball $\mathcal{B} \Subset \Omega$ and an auxiliary function $\bar{\psi}$.
For $\bar{\psi}$ we choose a smooth function compactly supported
in $\mathcal{B}$ such that it is radially symmetric with respect to the center of $\mathcal{B}$, its global maximum lies at the center of $\mathcal{B}$
and it is strictly decreasing for increasing radius (from the center of $\mathcal{B}$ again). We also choose $\bar{\psi}$ such that its Hessian matrix is
negative definite when evaluated at the maximum.
Now we claim $\int S_k[\bar{\psi}] \, dx = 0$; this is a direct consequence of the divergence form of the $k-$Hessian operator.
Next we claim that $S_k[\bar{\psi}]<0$ in a neighborhood of the maximum of $\bar{\psi}$; this is a direct consequence of the dependence of $S_k[u]$ on
the eigenvalues of the Hessian matrix and of the non-degeneracy assumptions on the maximum of $\bar{\psi}$. Both claims immediately imply that
$\int \bar{\psi} \, S_k[\bar{\psi}] \, dx < 0$. So for odd $k$ we can choose $\psi=\bar{\psi}$ and for even $k$ we can choose
$\psi=-\bar{\psi}$ (i.~e. $\psi=(-1)^{k+1} \, \bar{\psi} \,\, \forall \, k$).
\end{proof}

\begin{corollary}\label{corunbound}
Let $t \in \mathbb{R}_+$, then $J[t \varphi] <0$ for $t$ small enough and $J[t \psi]<0$ for $t$ large enough.
\end{corollary}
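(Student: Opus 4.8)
The plan is to exploit the separate homogeneities of the three terms of $J$ under the dilation $v \mapsto tv$, $t \ge 0$. Fix $v \in W^{\alpha,2}_0(\Omega)$; then $J[tv]$ is well defined for every $t \ge 0$ by Lemma~\ref{definition} (and Lemma~\ref{localhardy} in case~(iii)). Since $D^2(tv) = t\,D^2 v$ and $\sigma_k$ is homogeneous of degree $k$ in the eigenvalues, $S_k[tv] = t^k S_k[v]$; the quadratic term is homogeneous of degree $2$ and the linear term $-\lambda\int f\,(tv)\,dx$ of degree $1$. Writing
\[
a(v) := \frac{1}{2}\,\big\|\Delta^{\lfloor\alpha/2\rfloor}\nabla^{2(\alpha/2-\lfloor\alpha/2\rfloor)}v\big\|_2^2 \ge 0,\qquad
b(v) := \lambda\!\int f\,v\,dx,\qquad
c(v) := \frac{(-1)^k}{k+1}\!\int v\,S_k[v]\,dx,
\]
one gets $J[tv] = -\,b(v)\,t + a(v)\,t^2 - c(v)\,t^{k+1}$, a polynomial in $t$ whose behaviour near $0$ and near $+\infty$ is governed by its lowest and highest degree terms respectively.

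For $v = \varphi$ and $t \to 0^+$: Lemma~\ref{lembound} gives $b(\varphi) > 0$, while $a(\varphi)\,t^2 = o(t)$ and $|c(\varphi)|\,t^{k+1} = o(t)$ since $k+1 \ge 3 > 1$; hence $J[t\varphi] = -\,b(\varphi)\,t + o(t) < 0$ for all sufficiently small $t > 0$. For $v = \psi$ and $t \to +\infty$: Lemma~\ref{lembound} gives $(-1)^k\int \psi\,S_k[\psi]\,dx > 0$, i.e.\ $c(\psi) > 0$; because $k+1 \ge 3 > 2$, the term $-c(\psi)\,t^{k+1}$ dominates both $a(\psi)\,t^2$ and $-b(\psi)\,t$, so $J[t\psi] \to -\infty$ and in particular $J[t\psi] < 0$ once $t$ is large enough. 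Together with Proposition~\ref{downbound} this provides the mountain-pass geometry of $J$.

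There is no serious obstacle here: everything reduces to comparing powers of $t$. The only point that requires attention is the standing hypothesis $k \ge 2$, equivalently $k+1 \ge 3$, which is exactly what makes the $k$-Hessian term strictly negligible against the linear term as $t \to 0^+$ and strictly dominant over the quadratic term as $t \to +\infty$; for $k = 1$ the functional $J$ would be quadratic and this dichotomy would collapse.
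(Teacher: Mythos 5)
Your argument is correct and is precisely the one the paper intends: the corollary is stated without proof immediately after Lemma~\ref{lembound}, the implicit justification being exactly the homogeneity decomposition $J[tv]=-b(v)t+a(v)t^2-c(v)t^{k+1}$ with the signs of $b(\varphi)$ and $c(\psi)$ supplied by that lemma and the exponent comparison $1<2<k+1$. Your observation that the standing hypothesis $k\ge 2$ is what makes both limits work is a nice explicit touch, but the route is the same as the paper's.
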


\subsection{Palais-Smale compactness condition for $N/2 < k \le N$}

\begin{definition}
A sequence $\{u_n\}_{n \in \mathbb{N}} \subset W^{\alpha,2}_0(\Omega)$ such that
\begin{itemize}
\item $J[u_n] \to K \in \mathbb{R}, \, n \to \infty$,
\item $J'[u_n] \to 0 \quad \mathrm{in} \quad W^{-\alpha,2}(\Omega)$.
\end{itemize}
is called a Palais-Smale sequence to functional $J[u]$.
\end{definition}

We will assume the existence of a Palais-Smale sequence to functional $J[u]$ and prove a precompactness condition for it
in the case $N/2 < k \le N$.

\begin{proposition}\label{palais}
If a Palais-Smale sequence $\{u_n\}_{n \in \mathbb{N}}$ to functional $J[u]$ is bounded in $W_0^{\alpha,2}(\Omega)$
then there exists a subsequence $\{u_{n_j}\}_{n_j \in \mathbb{N}}$ that converges strongly in $W_0^{\alpha,2}(\Omega)$.
\end{proposition}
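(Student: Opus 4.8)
The plan is to show that a bounded Palais--Smale sequence $\{u_n\}$ admits a strongly convergent subsequence by combining weak compactness in $W_0^{\alpha,2}(\Omega)$ with the strong (compact) convergence of the lower-order terms in the functional. Since $\{u_n\}$ is bounded in the Hilbert space $W_0^{\alpha,2}(\Omega)$, after passing to a subsequence (still denoted $\{u_n\}$) we have $u_n \weak u$ weakly in $W_0^{\alpha,2}(\Omega)$ for some $u \in W_0^{\alpha,2}(\Omega)$. By the Rellich--Kondrachov theorem, the embedding $W_0^{\alpha,2}(\Omega) \hookrightarrow W_0^{2,k}(\Omega)$, which is continuous by the proof of lemma~\ref{definition} in the range $N/2 < k \le N$, is in fact compact (one has $\alpha \ge 2$ strictly above what is needed for continuity, giving the necessary room); hence $u_n \to u$ strongly in $W_0^{2,k}(\Omega)$, and in particular $D^2 u_n \to D^2 u$ strongly in $L^k(\Omega)$ and $u_n \to u$ in $L^\infty(\Omega)$ (using also the compact embedding into $C^0(\bar\Omega)$, again available because $\alpha$ exceeds the threshold).

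Next I would exploit the Palais--Smale condition. Testing $J'[u_n]$ against $u_n - u$ gives
\begin{eqnarray}\nonumber
\langle J'[u_n], u_n - u\rangle &=& \int \Delta^{\lfloor\alpha/2\rfloor}\nabla^{2(\alpha/2-\lfloor\alpha/2\rfloor)} u_n \cdot \Delta^{\lfloor\alpha/2\rfloor}\nabla^{2(\alpha/2-\lfloor\alpha/2\rfloor)}(u_n-u)\, dx \\ \nonumber
& & {} - \lambda \int f (u_n - u)\, dx - (-1)^k \int S_k[u_n]\,(u_n-u)\, dx,
\end{eqnarray}
and the left-hand side tends to $0$ because $J'[u_n] \to 0$ in $W^{-\alpha,2}(\Omega)$ while $u_n - u$ is bounded in $W_0^{\alpha,2}(\Omega)$. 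The term $\lambda \int f(u_n-u)\,dx$ tends to $0$ since $u_n \to u$ in $L^\infty(\Omega)$ and $f \in L^1(\Omega)$; the term $\int S_k[u_n](u_n-u)\,dx$ tends to $0$ by H\"older's inequality, since $S_k[u_n]$ is bounded in $L^1(\Omega)$ (the $k$-Hessian is a sum of $k \times k$ minors of $D^2 u_n$, which are bounded in $L^1$ by boundedness in $W_0^{2,k}$) and $u_n - u \to 0$ in $L^\infty(\Omega)$. Therefore the leading quadratic form evaluated on $u_n$ against $u_n - u$ tends to $0$. Since the same quadratic form evaluated on $u$ against $u_n - u$ also tends to $0$ by weak convergence $u_n \weak u$, we conclude that $\|\Delta^{\lfloor\alpha/2\rfloor}\nabla^{2(\alpha/2-\lfloor\alpha/2\rfloor)}(u_n-u)\|_2^2 \to 0$. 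By the remark following lemma~\ref{definition}, this Hilbertian seminorm is an equivalent norm on $W_0^{\alpha,2}(\Omega)$ (it is the square root of the quadratic part of $J$, comparable to $\|\cdot\|_{W_0^{\alpha,2}}$ via the Poincar\'e-type inequality for the polyharmonic operator with Dirichlet data), so $u_n \to u$ strongly in $W_0^{\alpha,2}(\Omega)$, which is the claim.

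The main obstacle is the handling of the nonlinear term: one must be sure that $S_k[u_n] \to S_k[u]$, or at least that $\int S_k[u_n](u_n - u)\,dx \to 0$, and for this the compactness of the embedding into $L^\infty$ (equivalently $C^0(\bar\Omega)$) is what does the work, pairing against the merely $L^1$-bounded Hessian nonlinearity — this is exactly the role played by the Arzel\`a--Ascoli theorem flagged in the introduction, and it is why the argument is confined to $N/2 < k \le N$ here. A minor technical point to check carefully is that the value of $\alpha = \lceil 2 + \frac{k-2}{2k}N\rceil$ indeed leaves strict room in the Sobolev exponent so that the embeddings $W_0^{\alpha,2}(\Omega) \hookrightarrow W_0^{2,k}(\Omega)$ and $W_0^{\alpha,2}(\Omega) \hookrightarrow C^0(\bar\Omega)$ are compact and not merely continuous; this follows because the ceiling function generically produces a strict inequality, and the borderline subtlety of the $k = N/2$ case (treated separately via Hardy spaces) does not arise in this range.
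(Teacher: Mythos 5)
Your argument is correct and follows essentially the same route as the paper: weak compactness in $W_0^{\alpha,2}(\Omega)$, uniform convergence of $u_n$ via the embedding into H\"older spaces together with Arzel\`a--Ascoli (equivalently, your compact embedding into $C^0(\bar\Omega)$, which is legitimate since $\alpha>N/2$ strictly in the range $N/2<k\le N$), and pairing $J'[u_n]$ with $u_n-u$ to annihilate the quadratic form of the difference. The only caveat is your claim that $W_0^{\alpha,2}(\Omega)\hookrightarrow W_0^{2,k}(\Omega)$ is compact, which can fail when $2+\tfrac{(k-2)N}{2k}$ is an integer (the ceiling then gives the critical exponent); this is harmless because the proof only uses the $L^1$-boundedness of $S_k[u_n]$ and the uniform convergence of $u_n$, both of which you justify independently.
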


\begin{proof}
If $\{u_n\}_{n \in \mathbb{N}}$ is uniformly bounded in $W_0^{\alpha,2}(\Omega)$ then, up to passing to a suitable subsequence,
we have the following convergence properties:
\begin{itemize}
\item[(A)] $u_{n} \rightharpoonup u$ weakly in $W_0^{\alpha,2}(\Omega)$,
\item[(B)] $u_{n} \rightarrow u$ uniformly in $\Omega$.
\end{itemize}
Convergence property (A) follows from the fact that any sequence bounded in a Hilbert space is weakly precompact.
On the other hand, convergence property (B) follows from two facts, the first one is the embedding of the Sobolev space $W_0^{\alpha,2}(\Omega)$
into the space
\begin{itemize}
\item $C^{0,\gamma}(\bar{\Omega}) \,\, \forall \, \gamma < 1 $ when $N$ is even,
\item $C^{0,1/2}(\bar{\Omega})$ when $N$ is odd and $k \le \lfloor 2N/3 \rfloor$,
\item $C^{1,1/2}(\bar{\Omega})$ when $N$ is odd and $k \ge \lfloor 2N/3+1 \rfloor$,
\end{itemize}
that implies boundedness and uniform equicontinuity of the sequence $\{u_n\}_{n \in \mathbb{N}}$, and the second fact is the Arzel\`a-Ascoli theorem.

Now consider the weak form of our equation
$$
(-1)^\alpha \Delta^\alpha u_n = (-1)^k S_k[u_n] + \lambda f + y_n, \qquad y_n \xrightarrow[n \to \infty]{} 0 \quad \mathrm{in} \quad W^{-\alpha,2}(\Omega).
$$
The pairing of $J'[u_n]$ with $(u_n-u)$ yields
\begin{eqnarray} \nonumber
& & \int \Delta ^{\lfloor \alpha/2 \rfloor}
\nabla^{2\left(\alpha/2-\lfloor \alpha/2 \rfloor \right)} u_n \,\, \Delta ^{\lfloor \alpha/2 \rfloor}
\nabla^{2\left(\alpha/2-\lfloor \alpha/2 \rfloor \right)} (u_n -u) dx
\\ \nonumber
&=& (-1)^k \int S_k[u_n](u_n-u) dx + \lambda \int f (u_n-u) dx + \langle y_n, u_n-u \rangle.
\end{eqnarray}
The second and third terms on the right hand side vanish in the limit $n \to \infty$ due to convergence property (A)
and the first term vanishes due to convergence property (B).
Furthermore we have
\begin{equation*}
\int \Delta ^{\lfloor \alpha/2 \rfloor}
\nabla^{2\left(\alpha/2-\lfloor \alpha/2 \rfloor \right)} u \,\, \Delta ^{\lfloor \alpha/2 \rfloor}
\nabla^{2\left(\alpha/2-\lfloor \alpha/2 \rfloor \right)} (u_n -u) dx  \xrightarrow[n \to \infty]{} 0,
\end{equation*}
due to convergence property (A); adding the last two results we find
\begin{equation}\nonumber
\int \left| \Delta ^{\lfloor \alpha/2 \rfloor}
\nabla^{2\left(\alpha/2-\lfloor \alpha/2 \rfloor \right)} u \right|^2 dx \xrightarrow[n \to \infty]{} 0.
\end{equation}
This proves the Palais-Smale condition to level $K$.
\end{proof}

\subsection{Existence and multiplicity results for $N/2 < k \le N$}
\label{existmulsec}

In this section we use in part the ideas in~\cite{GAP} to solve problems with concave–-convex nonlinearities.
In particular, note that functional~\eqref{func0} is unbounded below (this is a simple consequence of corollary~\ref{corunbound})
and then we cannot invoke standard minimization arguments
but instead we have to rely on the general theory of critical points of functionals.

\begin{theorem}\label{existmul}
There exist a $\lambda_0 > 0$ such that for $0 < |\lambda| < \lambda_0$ and $N/2 < k \le N$ problem~\eqref{dirichlet}
has at least two solutions.
\end{theorem}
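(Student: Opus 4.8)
The plan is to apply the mountain pass theorem of Ambrosetti--Rabinowitz to the functional $J[u]$ of~\eqref{func0} on $W^{\alpha,2}_0(\Omega)$, and then to extract a second solution by a separate minimization over a small ball, exploiting the concave--convex structure already visible in corollary~\ref{corunbound}. First I would fix $\lambda_0>0$ as follows: using the lower radial estimate $J[u]\ge G[u]$ from proposition~\ref{downbound}, I would study the one-variable function $g(s)=\tfrac12 s^2 - C_1|\lambda| s - C_2 s^{k+1}$ in the scalar variable $s=\|\Delta^{\lfloor\alpha/2\rfloor}\nabla^{2(\alpha/2-\lfloor\alpha/2\rfloor)}u\|_2$ (which is an equivalent norm on $W^{\alpha,2}_0(\Omega)$). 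Since $k+1\ge 3>2$, for $\lambda=0$ this function is positive on an interval $(0,s_1)$ and attains a strictly positive maximum there; by continuity in $\lambda$, there is $\lambda_0>0$ so that for $|\lambda|<\lambda_0$ one still has $g(\rho)\ge\delta>0$ on some sphere $\|u\|=\rho$ while $g(s)<0$ for $s$ small and for $s$ large. This furnishes the mountain pass geometry: $J[0]=0$ (actually only $J[t\varphi]<0$ for small $t$ is needed, from corollary~\ref{corunbound}), $J\ge\delta>0$ on $\{\|u\|=\rho\}$ by proposition~\ref{downbound}, and $J[t\psi]\to-\infty$ so there is a point $e$ outside the sphere with $J[e]<0$, again by corollary~\ref{corunbound}.

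Next I would verify the Palais--Smale condition at the mountain pass level $K=\inf_{\gamma}\max_{t}J[\gamma(t)]$. By proposition~\ref{palais} it suffices to show any Palais--Smale sequence at level $K$ is bounded in $W^{\alpha,2}_0(\Omega)$. This is the standard step: pairing $J'[u_n]$ with $u_n$ and combining with $J[u_n]\to K$ produces, after using $\frac1{k+1}<\frac12$, an inequality of the form $\bigl(\frac12-\frac1{k+1}\bigr)\|u_n\|^2 \le C + o(1)\|u_n\| + |\lambda|\,C\|f\|_*\|u_n\|$, where the appropriate Hölder/duality pairing for the linear term is the one used in proposition~\ref{downbound} in each of the three sub-cases $N/2<k\le N$, $2\le k<N/2$, $k=N/2$. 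Since the quadratic term dominates, $\{u_n\}$ is bounded, and proposition~\ref{palais} gives a strongly convergent subsequence; thus the $(PS)_K$ condition holds. The mountain pass theorem then yields a critical point $u_1$ with $J[u_1]=K\ge\delta>0$, which by proposition~\ref{eulag} is a solution of~\eqref{dirichlet}.

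For the second solution I would minimize $J$ over the closed ball $\overline{\mathcal B_\rho}=\{\|u\|\le\rho\}$. On this ball $J$ is bounded below (by proposition~\ref{downbound}, $G$ is bounded below on bounded sets) and weakly lower semicontinuous: the quadratic term is weakly l.s.c., the linear term $\int \lambda f u$ is weakly continuous, and the term $\int u S_k[u]$ is weakly continuous on $W^{\alpha,2}_0(\Omega)$ because the embedding into $C^0(\bar\Omega)$ (or, in the borderline cases, the compactness arguments underlying propositions~\ref{palais}/\ref{localhardy}) lets one pass to the limit in $u_n$ while $S_k[u_n]\rightharpoonup S_k[u]$ in $L^1$ — this is the content of the Arzelà--Ascoli step of proposition~\ref{palais}. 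Hence the infimum $m=\inf_{\overline{\mathcal B_\rho}}J$ is attained at some $u_0$. Since $J[t\varphi]<0$ for small $t$ (corollary~\ref{corunbound}) and such $t\varphi$ lies in $\mathcal B_\rho$, we get $m<0$; but $J\ge\delta>0$ on $\partial\mathcal B_\rho$, so the minimizer $u_0$ lies in the \emph{open} ball and is therefore a free critical point of $J$, hence another solution of~\eqref{dirichlet} by proposition~\ref{eulag}. Finally $J[u_0]=m<0<\delta\le K=J[u_1]$ forces $u_0\neq u_1$; since $u_0$ is the unique minimizer on $\overline{\mathcal B_\rho}$ (strict convexity of the dominant quadratic part on a ball small enough that the lower-order terms cannot create a second minimizer — shrinking $\tau\le\rho$ if necessary), the two solutions differ in $W^{\alpha,2}_0(\Omega)$ norm and one is isolated in $\mathcal B_\tau$.

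The main obstacle I anticipate is the weak continuity of the nonlinear term $u\mapsto\int u\,S_k[u]\,dx$ used both in proposition~\ref{palais} (the first term vanishing via property (B)) and in the lower-semicontinuity step above; in the regime $N/2<k\le N$ this is handled cleanly by the compact embedding $W^{\alpha,2}_0(\Omega)\hookrightarrow\hookrightarrow C^0(\bar\Omega)$ together with the $L^1$ bound on $S_k[u_n]$, which is exactly why the Arzelà--Ascoli theorem appears, and the choice of $\alpha$ in the hypothesis is precisely what makes that embedding available. The remaining delicate point is making the value $\lambda_0$ effective and checking that the mountain-pass level $K$ is strictly separated from $m$, which is immediate once the sphere $\partial\mathcal B_\rho$ is a genuine barrier, i.e. $\delta>0$; everything else is the routine bookkeeping of the Ambrosetti--Rabinowitz scheme.
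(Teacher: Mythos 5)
Your mountain-pass half is essentially the paper's own argument: the same choice of $\lambda_0$ via the radial lower bound of proposition~\ref{downbound}, the same geometry from corollary~\ref{corunbound}, the same estimate $K+o(1)=J[u_n]-\frac{1}{k+1}\langle J'[u_n],u_n\rangle+\frac{1}{k+1}\langle y_n,u_n\rangle$ to bound Palais--Smale sequences, and the same appeal to proposition~\ref{palais} for compactness. Where you genuinely diverge is the local minimum. The paper follows the Garc\'{\i}a Azorero--Peral truncation scheme: it introduces a cut-off functional $H[u]$ agreeing with $J$ on $\{\|u\|\le R_0\}$ and bounded below globally, produces a Palais--Smale sequence for $H$ at the negative level $m=\inf H$ via Ekeland, and concludes by proposition~\ref{palais}; this route never needs weak continuity of $u\mapsto\int u\,S_k[u]\,dx$, only the estimate $\left|\int S_k[u_n](u_n-u)\,dx\right|\le\|S_k[u_n]\|_1\|u_n-u\|_\infty$. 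You instead minimize $J$ directly over a closed ball and pass to the limit along a minimizing sequence, which requires full weak continuity of the cubic term. That is workable, but note that the convergence $\int u\,(S_k[u_n]-S_k[u])\,dx\to 0$ you need is \emph{not} what the Arzel\`a--Ascoli step of proposition~\ref{palais} provides (that step uses only the $L^1$ bound on $S_k[u_n]$ and the uniform convergence of $u_n-u$); it requires the weak-$*$ convergence $S_k[u_n]\overset{*}{\rightharpoonup}S_k[u]$ in $\mathcal{M}(\Omega)$, i.e. lemma~\ref{leminor}, which the paper proves but deploys only in the range $2\le k\le N/2$. With that reference supplied your argument closes: the truncation route buys independence from the weak-continuity machinery, while your direct minimization is shorter once lemma~\ref{leminor} is in hand. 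Your closing claim about uniqueness of the minimizer in a small ball is not part of theorem~\ref{existmul}; the paper establishes it separately via the second-variation convexity argument of theorem~\ref{existuni} and corollary~\ref{unimul}.
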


\begin{proof}
According to lemma~\ref{definition} functional $J[u]$ is well defined in $W^{\alpha,2}_0(\Omega)$. It is also continuous and Gateaux differentiable, and its derivative is weak$-\ast$ continuous. This is in fact the regularity required in Ekeland's weak version
of the {\it mountain pass} theorem~\cite{ekeland}. The scheme of our proof is as follows: we will prove that one of the solutions corresponds to a
local minimum of $J[u]$ and the other one to a {\it mountain pass} critical point.

{\it Step 1. $J[u]$ has a local minimum at a negative level.}

Let $\lambda_0 > 0$ be such that, for $0 < |\lambda| < \lambda_0$,
$$
g \left( \| \Delta ^{\lfloor \alpha/2 \rfloor}\nabla^{2\left(\alpha/2-\lfloor \alpha/2 \rfloor \right)} u \|_2
\right) := G[u]
$$
attaints its positive maximum at $R_M > 0$. Let $R_0$ be the
lower positive zero of $g(R)$ and $R_0 < R_1 < R_M < R_2$ such that $g(R_1) > 0$ and $g(R_2) > 0$.
Consider also a smooth nonincreasing cutoff function $\theta:\mathbb{R}_+ \to [0,1]$ that verifies
$\theta(R)=1$ for $R \le R_0$ and $\theta(R)=0$ for $R \ge R_1$.
For $\Theta[u]=\theta \left( \| \Delta ^{\lfloor \alpha/2 \rfloor}\nabla^{2\left(\alpha/2-\lfloor \alpha/2 \rfloor \right)}
u \|_2 \right)$ we define the functional
\begin{eqnarray}\nonumber
H[u] &=& \int \left\{ -\lambda f u + \frac{1}{2} \left| \Delta ^{\lfloor \alpha/2 \rfloor}
\nabla^{2\left(\alpha/2-\lfloor \alpha/2 \rfloor \right)} u \right|^2 \right\} dx \\ \nonumber
& & - \Theta[u] \int \left\{\frac{(-1)^k}{k+1} u S_k[u] \right\} dx.
\end{eqnarray}

\begin{lemma}\label{funch}
Functional $H[u]$ fulfills the following properties:
\begin{itemize}
\item[(I)] $H[u]$ is as regular as $J[u]$.
\item[(II)] $H[u]<0 \Rightarrow \| \Delta ^{\lfloor \alpha/2 \rfloor}\nabla^{2\left(\alpha/2-\lfloor \alpha/2 \rfloor \right)}
u \|_2 < R_0$.
\item[(III)] $\| \Delta ^{\lfloor \alpha/2 \rfloor}\nabla^{2\left(\alpha/2-\lfloor \alpha/2 \rfloor \right)}
u \|_2 \le R_0 \Rightarrow H[u]=J[u]$.
\item[(IV)] We define $m:= \inf_{w \in W_0^{\alpha,2}(\Omega)} H[w]$; then $H[u]$ verifies a local Palais-Smale condition to level $m$.
\end{itemize}
\end{lemma}

\begin{proof}
The first three properties are immediate. For the fourth property see that functional $H[u]$ is bounded below by
$h \left( \| \Delta ^{\lfloor \alpha/2 \rfloor}\nabla^{2\left(\alpha/2-\lfloor \alpha/2 \rfloor \right)} u \|_2
\right)$, where
$$
h(R)=\frac{1}{2} \, R^2 - C_1 \, R - C_2 \, R^{k+1},
$$
as can be shown by means of H\"older and Sobolev inequalities exactly as in the proof of proposition~\ref{downbound}.
This boundedness, together with the negative value of $m$, implies in turn that all Palais-Smale sequences of minimizers of $H[u]$ are bounded.
Now (IV) follows by invoking proposition~\ref{palais}.
\end{proof}

From this lemma it is clear that any negative critical value of $H[u]$ is a critical value of $J[u]$ too. In particular, $m$ is
a negative critical value of $J[u]$ and therefore there exists a local minimum of this functional.

{\it Step 2. $J[u]$ has a mountain pass critical point at a positive critical level.}

We have already checked in section~\ref{geometryj} that $J[u]$ fulfills the geometric constraints imposed by the
{\it mountain pass} theorem~\cite{ar,ekeland}. Denote by $u_m$ the local minimum which existence was proven in the previous step.
For $v \in W^{\alpha,2}_0(\Omega)$ such that $\| \Delta ^{\lfloor \alpha/2 \rfloor}\nabla^{2\left(\alpha/2-\lfloor \alpha/2 \rfloor \right)}
u \|_2 > R_M$ and $J[v] < J[u_m]$ we define
$$\Gamma=\left\{ \left. \gamma(t) \in C \left([0,1], W^{\alpha,2}_0(\Omega) \right) \, \right| \, \gamma(0)=u_m,\, \gamma(1)=v \right\},$$
as well as the minimax value
$$K=\inf_{\gamma \in \Gamma} \max_{t\in[0,1]} J[\gamma(t)].$$
The existence of a Palais-Smale sequence to level $K$, that is a sequence
$\{u_n\}_{n\in \mathbb{N}}\subset W^{\alpha,2}_0(\Omega)$ that fulfills
\begin{itemize}
\item $J[u_n] \to K$ when $n \to \infty$,
\item $J'[u_n] \to 0$ in $W^{-\alpha,2}(\Omega)$,
\end{itemize}
follows from Ekeland's variational principle~\cite{ekeland2}. Our next step is proving that all Palais-Smale sequences to level
$K$ are uniformly bounded in $W_0^{\alpha,2}(\Omega)$.
A combination of integration by parts, H\"older, Sobolev and duality inequalities yields
\begin{eqnarray} \nonumber
K + o(1) &=& J[u_n] -\frac{1}{k+1} \langle J'(u_n), u_n \rangle + \frac{1}{k+1} \langle y_n, u_n \rangle
\\ \nonumber
&\ge& \left( \frac{1}{2}-\frac{1}{k+1} \right) \int \left| \Delta ^{\lfloor \alpha/2 \rfloor}
\nabla^{2\left(\alpha/2-\lfloor \alpha/2 \rfloor \right)} u \right|^2 dx
\\ \nonumber
& & -|\lambda| C' \! \left(1-\frac{1}{k+1} \right) \! \|f\|_{1} \!
\left( \int \left| \Delta ^{\lfloor \alpha/2 \rfloor}
\nabla^{2\left(\alpha/2-\lfloor \alpha/2 \rfloor \right)} u \right|^2 dx \right)^{1/2}
\\ \nonumber
& & -\frac{1}{k+1} C'' \|y_n\|_{W^{-\alpha,2}(\Omega)} \! \left( \int \left| \Delta ^{\lfloor \alpha/2 \rfloor}
\nabla^{2\left(\alpha/2-\lfloor \alpha/2 \rfloor \right)} u \right|^2 dx \right)^{1/2} \! ,
\end{eqnarray}
for suitable positive constants $C'$ and $C''$.
For small enough $|\lambda|$ this proves the sequence is bounded. Now lemma~\ref{palais} implies that $J[u]$ fulfills the Palais-Smale
condition to level $K$ and consequently:
\begin{itemize}
\item[$\boldsymbol{\ast}$] $J[u_\star] = \lim_{n \to \infty} J[u_n] = K$,
\item[$\boldsymbol{\ast}$] $J'[u_\star] = 0$.
\end{itemize}
The last equality is equivalent to
$$
(-1)^\alpha \Delta^\alpha u_\star = (-1)^k S_k[u_\star] + \lambda f,
$$
for $u_\star \in W^{\alpha,2}_0(\Omega)$.
Finally note that $u_\star$ is necessarily different from $u_m$, since $J[u_m]<0$ and $J[u_\star]>0$.
Therefore we conclude that $u_\star$ is a {\it mountain pass} sort of solution to problem~\eqref{dirichlet}.
\end{proof}

\subsection{Existence and local uniqueness results for $2 \le k \le N/2$}
\label{existunisec}

In this case there is not enough compactness to prove the Palais-Smale condition analogously to what was done in proposition~\ref{palais}, i.e.,
invoking the Arzel\`a-Ascoli theorem.
However, we do not need such a strong condition to prove the existence of local minima, we just need our functional to be
weakly lower semicontinuous. We will use this fact to prove the existence of an isolated solution to problem~\eqref{dirichlet}.

\begin{remark}
Although there is not enough compactness to prove the Palais-Smale condition
invoking the Arzel\`a-Ascoli theorem in the range $2 \le k \le N/2$, it is still possible
to prove the existence of the \emph{mountain pass} sort of solution
employing different techniques, see section~\ref{weakcontsec}.
\end{remark}

\begin{definition}
Let $\mathfrak{u}$ be a solution to problem~\eqref{dirichlet}.
If there exists a $\varrho > 0$ such that this solution is unique in the ball
$$
\beth_\varrho(\mathfrak{u})=\left\{ \varphi \in W^{\alpha,2}_0(\Omega) : \| \Delta ^{\lfloor \alpha/2 \rfloor}
\nabla^{2\left(\alpha/2-\lfloor \alpha/2 \rfloor \right)} (\mathfrak{u} - \varphi) \|_2 \le \varrho \right\},
$$
then we say that $\mathfrak{u}$ is an isolated solution.
\end{definition}

\begin{theorem}\label{existuni}
There exists a $\lambda_1 >0$ such that for $0 < |\lambda| < \lambda_1$ and $2 \le k \le N/2$ problem~\eqref{dirichlet}
has at least one solution $u_m$. Furthermore, this solution is isolated.
\end{theorem}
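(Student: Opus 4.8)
The plan is to obtain the solution $u_m$ by the direct method of the calculus of variations, minimizing $J$ over a small closed ball centred at the origin, and then to prove that it is isolated by an energy estimate for the difference of two nearby solutions. Throughout, write $\|u\|$ for the seminorm $\|\Delta^{\lfloor\alpha/2\rfloor}\nabla^{2(\alpha/2-\lfloor\alpha/2\rfloor)}u\|_2$, which is equivalent to the $W^{\alpha,2}_0(\Omega)$ norm. Since the Arzel\`a--Ascoli argument of Proposition~\ref{palais} is unavailable when $2\le k\le N/2$, the compactness input here will be the weak sequential lower semicontinuity of $J$ on bounded subsets of $W^{\alpha,2}_0(\Omega)$; its only nontrivial ingredient, the weak sequential continuity of $u\mapsto\int u\,S_k[u]\,dx$, is what I expect to be the hard part, so I would record it as a separate lemma (see the last paragraph).

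For the existence of $u_m$ I would first fix $\lambda_1>0$ so small that for $0<|\lambda|<\lambda_1$ the radial function $g(R)=\frac12R^2-C_1R-C_2R^{k+1}$ of Proposition~\ref{downbound} (whose coefficient $C_1$ is proportional to $|\lambda|$) has a strictly positive maximum; let $R_0>0$ be its lower zero, so that $g(0)=g(R_0)=0$ and $g<0$ on $(0,R_0)$. The ball $B_{R_0}=\{u:\|u\|\le R_0\}$ is bounded and convex, hence weakly sequentially compact, and on it $J[u]\ge g(\|u\|)$ is bounded below; moreover, by Corollary~\ref{corunbound}, choosing $t>0$ small enough one has $t\varphi\in B_{R_0}$ and $J[t\varphi]<0$, so that $m:=\inf_{B_{R_0}}J<0$. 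Taking a minimizing sequence $u_n\rightharpoonup u_m$ in $W^{\alpha,2}_0(\Omega)$ (along a subsequence), I would use that the quadratic term of $J$ is weakly lower semicontinuous, that $u\mapsto-\lambda\int fu$ is weakly continuous by the $L^{p^*}$--$L^{q^*}$ (resp.\ the $h^1_r(\Omega)$--$\mathrm{bmo}$) duality of Lemma~\ref{definition} (resp.\ Lemma~\ref{localhardy}), and that $u\mapsto\int u\,S_k[u]\,dx$ is weakly continuous on $B_{R_0}$, to conclude $J[u_m]\le\liminf J[u_n]=m$, hence $J[u_m]=m<0$. Since $J[u_m]\ge g(\|u_m\|)$ and $g$ is negative only on $(0,R_0)$ within $[0,R_0]$, the inequality $m<0$ forces $0<\|u_m\|<R_0$, so $u_m$ is interior to $B_{R_0}$; therefore $J'[u_m]=0$, and by Proposition~\ref{eulag} $u_m$ solves~\eqref{dirichlet} and is in addition a local minimizer of $J$ on $W^{\alpha,2}_0(\Omega)$.

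For isolation, let $v$ be another solution of~\eqref{dirichlet} with $\|v-u_m\|\le\varrho$ and set $w=v-u_m$. Subtracting the two Euler--Lagrange equations and pairing with $w$ gives
\[
\|w\|^2=(-1)^k\int\bigl(S_k[v]-S_k[u_m]\bigr)\,w\,dx .
\]
Writing $S_k[v]-S_k[u_m]=\int_0^1\sum_{i,j}S_k^{ij}\bigl(D^2(u_m+tw)\bigr)\,w_{ij}\,dt$, integrating by parts in $x$ and using the divergence-free identity $\sum_i\partial_{x_i}S_k^{ij}(D^2\phi)=0$ (the structure underlying the divergence form of $S_k$), the right-hand side equals $-(-1)^k\int_0^1\!\!\int\sum_{i,j}S_k^{ij}\bigl(D^2(u_m+tw)\bigr)\,w_i\,w_j\,dx\,dt$; by $|S_k^{ij}(A)|\le C|A|^{k-1}$, H\"older's inequality and the Sobolev embeddings of $W^{\alpha,2}_0(\Omega)$ from the proof of Lemma~\ref{definition} (the Hardy--BMO duality of Lemma~\ref{localhardy} when $k=N/2$), this is at most $C(\|u_m\|+\|w\|)^{k-1}\|w\|^2$. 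Hence $\|w\|^2\le C(\|u_m\|+\|w\|)^{k-1}\|w\|^2$. Since $\|u_m\|<R_0$ and $R_0\to0$ as $|\lambda|\to0$, after shrinking $\lambda_1$ if necessary we may assume $R_0<\frac12 C^{-1/(k-1)}$; choosing $\varrho:=\frac12 C^{-1/(k-1)}$ makes $C(\|u_m\|+\|w\|)^{k-1}<1$, which forces $w=0$. Thus $u_m$ is the unique solution in $\beth_\varrho(u_m)$, i.e.\ it is isolated.

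The step I expect to be genuinely delicate is the weak sequential continuity of $u\mapsto\int u\,S_k[u]\,dx$ on bounded subsets of $W^{\alpha,2}_0(\Omega)$. The exponent $\alpha=\left\lceil\frac{Nk-N+4k}{2k+2}\right\rceil$ (and $\alpha=N/2$ for $k=N/2$) is chosen precisely so that, along a weakly convergent subsequence $u_n\rightharpoonup u$, one has $S_k[u_n]\to S_k[u]$ in $L^{p^*}(\Omega)$ (respectively in the local Hardy space $h^1_z(\bar{\Omega})$) while $u_n$ converges in the dual space, so that the product passes to the limit by H\"older (respectively by $h^1_z$--$\mathrm{bmo}_r$ duality); the convergence of $S_k[u_n]$ itself rests on the divergence structure of the $k$-Hessian together with compensated-compactness arguments. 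I would prove this as an autonomous lemma, after which the rest of the argument is standard.
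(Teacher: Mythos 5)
Your proposal is essentially sound, but it follows a genuinely different route from the paper's. The paper obtains both existence and isolation in one stroke from \emph{convexity}: working with the truncated functional $H$ of Lemma~\ref{funch} on the region $\|u\|\le R_0$, it computes the second variation, bounds the nonlinear contribution $(k+1)\sum_{i,j}\int v_i v_j S_k^{ij}[u]\,dx$ by $C\|u\|^{k-1}\|v\|^2$ using exactly the H\"older exponents $\tilde p,\tilde q$ and Sobolev embeddings you invoke, and concludes that the functional is strictly convex there once $|\lambda|$ (hence $R_0$) is small; a strictly convex continuous functional on a closed bounded convex subset of a Hilbert space is automatically weakly lower semicontinuous and attains its infimum at a unique point, so no weak continuity of the cubic term is ever needed, and uniqueness in the ball comes for free. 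Your direct-method existence argument instead hinges on the weak sequential continuity of $u\mapsto\int u\,S_k[u]\,dx$, which you rightly flag as the delicate step and defer to a lemma; that lemma is true and is close to Propositions~\ref{weakcont} and~\ref{weakcontast} (via the identity $\int u S_k[u]\,dx=\frac{1}{k}\sum_{i,j}\int u_i u_j S_k^{ij}[u]\,dx$, Lemma~\ref{leminor} and Rellich--Kondrachov), but it is precisely the compensated-compactness input the paper's convexity argument is designed to avoid, and at the endpoint $k=N/2$ it drags in the $h^1$--$\mathrm{vmo}$ machinery of Section~\ref{weakcontsec}. Your isolation estimate is, in substance, the integrated first-order-difference form of the paper's second-variation positivity: the same divergence-free identity $\sum_i\partial_{x_i}S_k^{ij}=0$, the same bound $|S_k^{ij}(A)|\le C|A|^{k-1}$ and the same embeddings yield the contraction factor $C(\|u_m\|+\|w\|)^{k-1}<1$, so this part is a correct and somewhat more explicit variant of what the paper does (and your parenthetical appeal to Hardy--BMO duality at $k=N/2$ is unnecessary, since the $\tilde p,\tilde q$ estimates cover the whole range $2\le k\le N/2$). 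In short, your approach buys an explicit minimizing-sequence construction at the price of a nontrivial deferred lemma, while the paper's convexity argument is more economical and self-contained at this stage.
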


\begin{proof}
Consider the functional $H[u]$ which properties were proved in lemma~\ref{funch}.
We will focus on the region $\| \Delta ^{\lfloor \alpha/2 \rfloor}
\nabla^{2\left(\alpha/2-\lfloor \alpha/2 \rfloor \right)} u \|_2 \le R_0$.
In this case a calculation akin to that in proposition~\ref{eulag} shows that its second variation reads~\cite{wang}
\begin{eqnarray}\nonumber
\left. \frac{d^2}{dt^2} H[u+tv] \right|_{t=0} \!\! &=& \!\! (k+1) \sum_{i,j} \int v_i v_j S^{ij}_k[u] + \| \Delta ^{\lfloor \alpha/2 \rfloor}
\nabla^{2\left(\alpha/2-\lfloor \alpha/2 \rfloor \right)} v \|_2^2 \\ \nonumber
&\le& \!\! \| \Delta ^{\lfloor \alpha/2 \rfloor}
\nabla^{2\left(\alpha/2-\lfloor \alpha/2 \rfloor \right)} v \|_2^2
- C \sum_{i,j} \|v_i\|_{\tilde{q}} \|v_j\|_{\tilde{q}} \| S^{ij}_k[u] \|_{\tilde{p}} \\ \nonumber
&\le& \!\! \left( 1 - C \| \Delta ^{\lfloor \alpha/2 \rfloor}
\nabla^{2\left(\alpha/2-\lfloor \alpha/2 \rfloor \right)} u \|_2^{k-1} \right) \\ \nonumber
& & \!\! \times \| \Delta ^{\lfloor \alpha/2 \rfloor}
\nabla^{2\left(\alpha/2-\lfloor \alpha/2 \rfloor \right)} v \|_2^2 ,
\end{eqnarray}
where we have used the following H\"{o}lder inequality
\begin{eqnarray}\nonumber
\int \left| v_i v_j S^{ij}_k[u] \right| &\le& \|v_i\|_{\tilde{q}} \|v_j\|_{\tilde{q}} \| S^{ij}_k[u] \|_{\tilde{p}}, \\ \nonumber
\tilde{p} &=& \frac{N(k+1)}{(N+2)(k-1)}, \\ \nonumber
\tilde{q} &=& \frac{N(k+1)}{N-k+1},
\end{eqnarray}
in the first inequality and the Sobolev embeddings
\begin{eqnarray}\nonumber
W^{\alpha,2}_0(\Omega) &\hookrightarrow& W^{2,(k-1)\tilde{p}}_0(\Omega), \\ \nonumber
W^{2,(k-1)\tilde{p}}_0(\Omega) &\hookrightarrow& W^{1,\tilde{q}}_0(\Omega),
\end{eqnarray}
in the second. For small enough $|\lambda|$, what in turn
implies a sufficiently small $R_0$, the second variation is strictly positive, and consequently the functional is strictly convex in the region
under consideration. The statement follows as a direct consequence of this fact and the continuity of the functional.
\end{proof}

\begin{corollary}\label{unimul}
The solution corresponding to the local minimum in theorem~\ref{existmul} is isolated whenever $|\lambda|$ is small enough.
\end{corollary}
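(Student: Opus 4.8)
The plan is to reproduce the strict--convexity argument of the proof of Theorem~\ref{existuni}, which is actually easier in the present range $N/2<k\le N$ because the embedding into $L^\infty(\Omega)$ is at our disposal. Recall that the local minimum $u_m$ of Theorem~\ref{existmul} is a global minimizer of the truncated functional $H[u]$ of Lemma~\ref{funch}, and that $H[u_m]=m<0$ (by Corollary~\ref{corunbound}, $H[t\varphi]=J[t\varphi]<0$ for $t$ small). Property (II) of Lemma~\ref{funch} then forces $\|\Delta^{\lfloor\alpha/2\rfloor}\nabla^{2(\alpha/2-\lfloor\alpha/2\rfloor)}u_m\|_2<R_0$, while property (III) gives $H\equiv J$ on the convex set $\mathcal{K}:=\{u\in W_0^{\alpha,2}(\Omega):\|\Delta^{\lfloor\alpha/2\rfloor}\nabla^{2(\alpha/2-\lfloor\alpha/2\rfloor)}u\|_2\le R_0\}$. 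Hence it suffices to prove that $J$ is strictly convex on $\mathcal{K}$ for $|\lambda|$ small: a Gateaux--differentiable strictly convex functional restricted to the convex set $\mathcal{K}$ has at most one critical point there (two critical points $u_1\neq u_2$ would make $t\mapsto J((1-t)u_1+tu_2)$ strictly convex on $[0,1]$ with vanishing derivative at both endpoints, a contradiction), so every solution of~\eqref{dirichlet} lying in $\mathcal{K}$ must coincide with $u_m$; since $u_m$ is interior to $\mathcal{K}$ there is $\varrho>0$ with $\beth_\varrho(u_m)\subset\mathcal{K}$, which is exactly the assertion.

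To obtain the strict convexity I would compute, as in the proof of Theorem~\ref{existuni}, the second variation
\begin{equation}\nonumber
\left.\frac{d^2}{dt^2}J[u+tv]\right|_{t=0}=\|\Delta^{\lfloor\alpha/2\rfloor}\nabla^{2(\alpha/2-\lfloor\alpha/2\rfloor)}v\|_2^2+(k+1)\sum_{i,j}\int v_iv_jS^{ij}_k[u]\,dx,
\end{equation}
first for $u,v\in C_0^\infty(\Omega)$ and then for $u,v\in W_0^{\alpha,2}(\Omega)$ by density. The nonlinear term is controlled by one H\"older inequality, $\int|v_iv_jS^{ij}_k[u]|\,dx\le\|v_i\|_\infty\|v_j\|_\infty\|S^{ij}_k[u]\|_1$, together with the chain $W_0^{\alpha,2}(\Omega)\hookrightarrow W_0^{2,k}(\Omega)\hookrightarrow L^\infty(\Omega)$ for the factors $v_i,v_j$ (so $\|v_i\|_\infty\le C\|\Delta^{\lfloor\alpha/2\rfloor}\nabla^{2(\alpha/2-\lfloor\alpha/2\rfloor)}v\|_2$; these are exactly the embeddings checked in Step~1 of the proof of Lemma~\ref{definition}) and the bound $\|S^{ij}_k[u]\|_1\le C\|D^2u\|_k^{k-1}\le C\|\Delta^{\lfloor\alpha/2\rfloor}\nabla^{2(\alpha/2-\lfloor\alpha/2\rfloor)}u\|_2^{k-1}$, valid since $S^{ij}_k$ is a homogeneous polynomial of degree $k-1$ in the Hessian entries. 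This gives
\begin{equation}\nonumber
\left.\frac{d^2}{dt^2}J[u+tv]\right|_{t=0}\ge\left(1-C\|\Delta^{\lfloor\alpha/2\rfloor}\nabla^{2(\alpha/2-\lfloor\alpha/2\rfloor)}u\|_2^{k-1}\right)\|\Delta^{\lfloor\alpha/2\rfloor}\nabla^{2(\alpha/2-\lfloor\alpha/2\rfloor)}v\|_2^2,
\end{equation}
which is strictly positive for every $u\in\mathcal{K}\setminus\{0\}$ as soon as $R_0^{k-1}<1/C$.

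It remains to check that this smallness condition holds after possibly shrinking $\lambda_0$. Recall from the proof of Theorem~\ref{existmul} that $R_0$ is the lower positive zero of the function $g$, of the form $g(R)=\tfrac12R^2-C_1|\lambda|\,\|f\|\,R-C_2R^{k+1}$; since $g(R)\to R^2\big(\tfrac12-C_2R^{k-1}\big)$ uniformly on bounded sets as $\lambda\to0$, its lower positive zero satisfies $R_0\to0$ as $|\lambda|\to0$. Hence there is $\lambda_0>0$ (not larger than the one furnished by Theorem~\ref{existmul}) such that $R_0^{k-1}<1/C$ whenever $0<|\lambda|<\lambda_0$, and then $J$ is strictly convex on $\mathcal{K}$, completing the argument. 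The only step demanding care is the H\"older--Sobolev bookkeeping for the coefficient $S^{ij}_k[u]$; but this is precisely the estimate already carried out in the proof of Theorem~\ref{existuni} (and in Lemma~\ref{definition}), here in the more comfortable regime $N/2<k\le N$ where the $L^\infty$ embedding makes the dual exponents $\tilde p,\tilde q$ unnecessary, and the remaining ingredient is the standard uniqueness of the critical point of a strictly convex functional on a convex set.
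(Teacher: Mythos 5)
Your overall strategy --- truncate with $H$, locate $u_m$ in the ball of radius $R_0$, and prove strict convexity of $J$ there via the second variation --- is exactly the route the paper intends: Corollary~\ref{unimul} is meant to follow because the second-variation argument in the proof of Theorem~\ref{existuni} applies verbatim in the range $N/2<k\le N$. However, the one estimate you yourself flag as ``the only step demanding care'' is wrong as written. You bound $\int|v_iv_jS^{ij}_k[u]|\,dx$ by $\|v_i\|_\infty\|v_j\|_\infty\|S^{ij}_k[u]\|_1$ and claim $\|v_i\|_\infty\le C\|\Delta^{\lfloor\alpha/2\rfloor}\nabla^{2(\alpha/2-\lfloor\alpha/2\rfloor)}v\|_2$ from the chain $W_0^{\alpha,2}(\Omega)\hookrightarrow W_0^{2,k}(\Omega)\hookrightarrow L^\infty(\Omega)$. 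That chain controls $\|v\|_\infty$, not the first derivatives $v_i=\partial_{x_i}v$: to get $\nabla v\in L^\infty$ from $W_0^{\alpha,2}(\Omega)$ one needs $\alpha>1+N/2$, which fails for the case (i) values of $\alpha$. Concretely, for $N=3$, $k=2$ one has $\alpha=\lceil 2+\frac{k-2}{2k}N\rceil=2$ and $\nabla v\in W^{1,2}(\Omega)\hookrightarrow L^{6}(\Omega)$ only, far from $L^\infty$; for $N=4$, $k=3$ one has $\alpha=3$ and $\nabla v\in W^{2,2}$ in four dimensions, which misses $L^\infty$ at the borderline. So the displayed lower bound on the second variation is not justified by the embeddings you invoke.

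The repair is simply to keep the paper's own H\"older splitting from the proof of Theorem~\ref{existuni}, namely $\int|v_iv_jS^{ij}_k[u]|\le\|v_i\|_{\tilde q}\|v_j\|_{\tilde q}\|S^{ij}_k[u]\|_{\tilde p}$ with $\tilde p=\frac{N(k+1)}{(N+2)(k-1)}$ and $\tilde q=\frac{N(k+1)}{N-k+1}$, together with the embeddings $W^{\alpha,2}_0(\Omega)\hookrightarrow W^{2,(k-1)\tilde p}_0(\Omega)\hookrightarrow W^{1,\tilde q}_0(\Omega)$. These require only $\alpha\ge\frac{Nk-N+4k}{2k+2}$, which the case (i) exponent $\alpha=\lceil 2+\frac{k-2}{2k}N\rceil$ also satisfies (cf.\ the remark following Proposition~\ref{eulag}); no $L^\infty$ bound on $\nabla v$ is needed or available. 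The remainder of your argument --- the uniqueness of critical points of a strictly convex Gateaux-differentiable functional on the convex ball, the fact that $R_0\to0$ as $|\lambda|\to0$ so that the convexity condition $R_0^{k-1}<1/C$ can be met by shrinking $\lambda_0$, and the observation that $u_m$ lies in the interior of that ball by property (II) of Lemma~\ref{funch} --- is correct and consistent with the paper.
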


\begin{remark}\label{lambda0}
The results proven so far assume $\lambda \neq 0$. The case $\lambda=0$ can be analyzed by means of an immediate reformulation of the same
arguments. In this case we still have at least two solutions that correspond to a local minimum and to a \emph{mountain pass} critical point of our
functional. The solution corresponding to the local minimum is again isolated, and the only difference with respect to the $\lambda \neq 0$ case is that
it becomes trivial when we set this parameter to zero. This same remark applies as well to all the results to come in this paper.
\end{remark}

\section{Weak continuity and weaker notions of solution}
\label{weakcontsec}

In this section we will explore how the precise structure of our nonlinearity will allow us to improve our existence results.
Let us note that related nonlinearities have been explored in great detail in the past. The weak continuity and weak definitions of
both the Hessian and the Jacobian determinants have been studied, for instance,
in~\cite{ball,brezis1,brezis2,brezis3,brezis4,brezis5,coifman,dacorogna,giaquinta,hajlasz,iwaniec1,iwaniec2,iwaniec3,morrey,
muller1,muller2,muller3,muller4,muller5,reshetnyak},
where this list is meant by no means to be exhaustive. Some of these previous results will help us in our current task.

\subsection{Existence and multiplicity results for $2 \le k < N/2$}

We start proving a technical result that will in turn allow us to prove the weak continuity of our nonlinearity.
Precisely this will be the substitute of uniform equicontinuity and the Arzel\`a-Ascoli theorem in our proof of existence
of the \emph{mountain pass} type of solution. We denote generically by $\mathfrak{M}_k(D^2 u)$ an arbitrary minor of order $k$ of the Hessian matrix.

\begin{lemma}\label{leminor}
Every minor $\mathfrak{M}_k(D^2 u)$ of order $k$, $1 \le k \le N$, of the Hessian matrix $D^2 u$ is weakly$-*$ continuous in the sense of measures, i. e., if
$$
u_n \rightharpoonup u \qquad \text{weakly in} \quad W^{2,k}(\Omega),
$$
then
$$
\mathfrak{M}_k(D^2 u_n) \overset{*}{\rightharpoonup} \mathfrak{M}_k(D^2 u) \qquad \text{weakly$-*$ in} \quad \mathcal{M}(\Omega),
$$
where $\mathcal{M}(\Omega)$ is the space of signed Radon measures on $\Omega$ with finite mass.
Furthermore, if
$$
u_n \rightharpoonup u \qquad \text{weakly in} \quad W^{2,k \wp}(\Omega),
$$
for some $\wp >1$, then
$$
\mathfrak{M}_k(D^2 u_n) \rightharpoonup \mathfrak{M}_k(D^2 u) \qquad \text{weakly in} \quad L^{\wp}(\Omega).
$$
\end{lemma}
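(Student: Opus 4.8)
The plan is to exploit the divergence structure of the $k$-Hessian minors. The starting observation is that every minor $\mathfrak{M}_k(D^2 u)$ of order $k$ of the Hessian matrix can be written in divergence form: iterating the identity used in the proof of Lemma~\ref{hardyz}, one obtains
\begin{equation}\nonumber
\mathfrak{M}_k(D^2 u) = \sum_{i} \partial_{x_i} \Big( \sum_{\ell} u_{x_\ell j_\ell} \, P^{i}_{k-1}(D^2 u) \Big),
\end{equation}
more precisely a sum of terms of the form $\partial_{x_i}(u_{x_j} \, Q)$ where $Q$ is (up to sign) a minor of order $k-1$ of $D^2 u$ with the $i$-th row and $j$-th column deleted. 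This is the classical fact that the cofactor/minor expressions are ``null Lagrangians'' and it reduces the problem to an induction on $k$. So the first step is to set up this induction: for $k=1$ the statement is trivial (weak convergence in $W^{2,1}$ already gives $L^1$-weak, indeed norm, convergence of $D^2 u_n$, hence of any entry). Assume the result holds for order $k-1$.

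For the inductive step, suppose $u_n \rightharpoonup u$ in $W^{2,k}(\Omega)$. Pick any test function $\phi \in C_0^\infty(\Omega)$ and write, using the divergence form above,
\begin{equation}\nonumber
\int \phi \, \mathfrak{M}_k(D^2 u_n) \, dx = - \sum \int \partial_{x_i}\phi \; u_{n,x_j} \; Q_{k-1}(D^2 u_n) \, dx,
\end{equation}
where each $Q_{k-1}(D^2 u_n)$ is a minor of order $k-1$. By the inductive hypothesis $Q_{k-1}(D^2 u_n) \rightharpoonup Q_{k-1}(D^2 u)$ weakly-$*$ in $\mathcal{M}(\Omega)$; by Rellich--Kondrachov, $u_{n,x_j} \to u_{x_j}$ strongly in $L^{p}(\Omega)$ for every $p < k^* = Nk/(N-k)$ (and in $C(\bar\Omega)$ when $Nk > N$, i.e.\ the relevant subcritical regimes), so in particular $\partial_{x_i}\phi \cdot u_{n,x_j} \to \partial_{x_i}\phi \cdot u_{x_j}$ in $C_0(\Omega)$ when the exponents cooperate, or uniformly on $\mathrm{supp}\,\phi$. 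Pairing a weakly-$*$ convergent sequence of measures with a uniformly convergent sequence of continuous functions passes to the limit, giving $\int \phi \, \mathfrak{M}_k(D^2 u_n) \to \int \phi \, \mathfrak{M}_k(D^2 u)$; a uniform mass bound for $\mathfrak{M}_k(D^2 u_n)$ follows from the $W^{2,k}$ bound by H\"older, yielding the weak-$*$ convergence in $\mathcal{M}(\Omega)$. For the second (higher integrability) assertion: if instead $u_n \rightharpoonup u$ in $W^{2,k\wp}(\Omega)$, then each factor $u_{n,x_j}$ and each order-$(k-1)$ minor sit in better Lebesgue spaces, and a H\"older-exponent bookkeeping shows $\mathfrak{M}_k(D^2 u_n)$ is bounded in $L^\wp(\Omega)$; combined with the weak-$*$ convergence already obtained (which identifies the limit as $\mathfrak{M}_k(D^2 u)$), boundedness in the reflexive space $L^\wp(\Omega)$ upgrades this to weak $L^\wp$ convergence. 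Alternatively one may cite the results of~\cite{reshetnyak,ball,muller1} directly, as is done elsewhere in the paper.

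The main obstacle I anticipate is the exponent bookkeeping in the inductive step: one must check that the product $u_{n,x_j}\cdot Q_{k-1}(D^2 u_n)$ is genuinely controlled, i.e.\ that the strong (compact) convergence of the gradient happens in a Lebesgue space dual to the one in which the lower-order minor lives, \emph{uniformly across} $2 \le k \le N$ — the borderline $W^{2,k} \hookrightarrow W^{1,k^*}$ case needs care, and one may need to test only against $C_0^\infty$ and use that $\partial_{x_i}\phi$ is bounded with compact support to absorb the loss. A secondary technical point is justifying the divergence identity for $W^{2,k}$ functions rather than smooth ones, which is handled exactly as in Lemma~\ref{hardyz}: establish it for $v \in C_0^\infty(\Omega)$ by direct computation (antisymmetry of the determinant expansion) and pass to the limit using density of $C_0^\infty(\Omega)$ in $W^{2,k}_0(\Omega)$, noting that both sides are continuous in the $W^{2,k}$ topology on the relevant integrals against a fixed test function.
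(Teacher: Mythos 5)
Your overall strategy coincides with the paper's: write the minor in divergence form, induct on the order, integrate by parts against a test function, pass to the limit in the product of a strongly and a weakly converging factor, and finally upgrade distributional convergence to weak($-*$) convergence via a uniform bound and uniqueness of weak limits. However, there is a genuine gap in your inductive step as written. You take as induction hypothesis only the weak$-*$ convergence of the order-$(k-1)$ minors \emph{in the sense of measures}, and you then try to pair these measures with $\partial_{x_i}\phi\cdot u_{n,x_j}$, claiming uniform convergence of the latter ``when the exponents cooperate.'' They essentially never do: $u_{n,x_j}$ lies in $W^{1,k}(\Omega)$, which embeds into $C(\bar\Omega)$ only for $k>N$, so in the entire range $1\le k\le N$ the gradient factor converges strongly only in Lebesgue spaces (in $L^p$ for $p<Nk/(N-k)$), not uniformly, and the pairing of a measure with a merely $L^p$ function is undefined. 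Testing against $C_0^\infty$ does not absorb this loss, since multiplying by a bounded compactly supported function cannot turn a measure into an $L^{k/(k-1)}$ density.

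The fix --- and this is what the paper does --- is to run the induction on the \emph{stronger} clause of the lemma itself: since $u_n\rightharpoonup u$ in $W^{2,k}(\Omega)=W^{2,(k-1)\wp}(\Omega)$ with $\wp=k/(k-1)>1$, the induction hypothesis gives that the cofactors (order-$(k-1)$ minors) converge \emph{weakly in} $L^{k/(k-1)}(\Omega)$, not merely weakly$-*$ in $\mathcal{M}(\Omega)$. Combined with the strong convergence $u_{n,x_p}\to u_{x_p}$ in $L^{k}(\Omega)$ from Rellich--Kondrachov (using $W^{2,k}\hookrightarrow W^{1,Nk/(N-k)}$ and $Nk/(N-k)>k$), the H\"older exponents match exactly, $\tfrac1k+\tfrac{k-1}{k}=1$, and the product converges weakly in $L^1(\Omega)$ against the bounded function $\partial_{x_q}\varphi$. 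Your ``exponent bookkeeping'' worry is thus precisely the crux, and the resolution is not a technical absorption trick but a strengthening of the inductive hypothesis to the $L^\wp$ statement; once you reformulate the induction this way, the rest of your argument (the $L^1$ mass bound, weak$-*$ compactness in $\mathcal{M}(\Omega)$, reflexivity of $L^\wp$, and uniqueness of weak limits) goes through as in the paper. Your alternative of citing the classical weak-continuity results for minors directly is legitimate but bypasses the self-contained proof the paper intends.
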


\begin{proof}

{\it Step 1.}

Our first step is proving weak continuity in the sense of distributions, i. e., if
$$
u_n \rightharpoonup u \qquad \text{weakly in} \quad W^{2,k}(\Omega),
$$
then
$$
\mathfrak{M}_k(D^2 u_n) \rightarrow \mathfrak{M}_k(D^2 u) \qquad \text{in} \quad \mathcal{D}'(\Omega).
$$
The proof follows by induction. Linearity guarantees that
all minors of order $1$ (that is, all entries of the matrix $D^2 u$) are weakly continuous.
From now on we assume $u \in C^\infty(\Omega)$ and argue by approximation when necessary.
Note also that $W^{2,k_2}(\Omega) \subset W^{2,k_1}(\Omega)$ whenever $k_1 < k_2$.
Our induction will be based in proving that the
weak continuity of all minors of order $k-1$ implies the weak continuity of all minors of order $k$. Select one such minor:
$$
\mathfrak{M}_k(D^2 u) = \det \! \left[ \left( \frac{\partial^2 u}{\partial x_p \partial x_q} \right)_{p \in P, q \in Q} \right],
$$
where $P,Q$ are subsets of $\{1, \cdots, N\}$ with cardinality $k$. Let $\varphi \in C_0^\infty(\Omega)$, integrating by parts we find
\begin{eqnarray}\label{intpartdet}
& & \int \det \! \left[ \left( \frac{\partial^2 u}{\partial x_p \partial x_q} \right)_{p \in P, q \in Q} \right] \varphi \, dx \\ \nonumber
&=& - \sum_{q \in Q} \int \frac{\partial u}{\partial x_p} \, \mathrm{cof} \!
\left[ \left( \frac{\partial^2 u}{\partial x_{\bar{p}} \partial x_{\bar{q}}} \right)_{\bar{p} \in P, \bar{q} \in Q} \right]_{pq}
\frac{\partial \varphi}{\partial x_q} \, dx,
\quad p \in P,
\end{eqnarray}
where we have used
$$
\det \! \left[ \left( \frac{\partial^2 u}{\partial x_p \partial x_q} \right)_{p \in P, q \in Q} \right]=
\sum_{q \in Q} \frac{\partial}{\partial x_q}
\left\{ \frac{\partial u}{\partial x_p} \, \mathrm{cof}\! \left[ \left( \frac{\partial^2 u}{\partial x_{\bar{p}} \partial x_{\bar{q}}}
\right)_{\bar{p} \in P, \bar{q} \in Q} \right]_{pq} \right\},
$$
$p \in P$. Now for $u_n \rightharpoonup u$ weakly in $W^{2,k}(\Omega)$ we have
\begin{eqnarray}\nonumber
& & \lim_{n \to \infty} \int \det \! \left[ \left( \frac{\partial^2 u_n}{\partial x_p \partial x_q} \right)_{p \in P, q \in Q} \right] \varphi \, dx \\
\nonumber
&=& \lim_{n \to \infty} \,\, - \sum_{q \in Q} \int \frac{\partial u_n}{\partial x_p} \, \mathrm{cof} \!
\left[ \left( \frac{\partial^2 u_n}{\partial x_{\bar{p}} \partial x_{\bar{q}}} \right)_{\bar{p} \in P, \bar{q} \in Q} \right]_{pq}
\frac{\partial \varphi}{\partial x_q} \, dx \\ \nonumber
&=& - \sum_{q \in Q} \int \frac{\partial u}{\partial x_p} \, \mathrm{cof} \!
\left[ \left( \frac{\partial^2 u}{\partial x_{\bar{p}} \partial x_{\bar{q}}} \right)_{\bar{p} \in P, \bar{q} \in Q} \right]_{pq}
\frac{\partial \varphi}{\partial x_q} \, dx \\ \nonumber
&=& \int \det \! \left[ \left( \frac{\partial^2 u}{\partial x_p \partial x_q} \right)_{p \in P, q \in Q} \right] \varphi \, dx,
\end{eqnarray}
$p \in P$, where we have used~\eqref{intpartdet} in the first and third steps, and $\partial_{x_p} u_n \in L^{k}(\Omega)$,
$$
\mathrm{cof} \!
\left[ \left( \frac{\partial^2 u_n}{\partial x_{\bar{p}} \partial x_{\bar{q}}} \right)_{\bar{p} \in P, \bar{q} \in Q} \right]_{pq}
\in L^{k/(k-1)}(\Omega),
$$
the Sobolev embedding\footnote{This is actually an abuse of notation. In the case $N=k$, of course,
one should interpret this as the embedding into the Sobolev space $W^{1,r} \, \forall \, r <\infty$.} $W^{2,k} \hookrightarrow W^{1,Nk/(N-k)}$,
the inequality $Nk/(N-k)> k$, the Rellich-Kondrachov theorem that guarantees $u_n \to u$ strongly
in $W^{1,k}$ provided $u_n \rightharpoonup u$ weakly in $W^{2,k}$, and
$$
\mathrm{cof} \!
\left[ \left( \frac{\partial^2 u_n}{\partial x_{\bar{p}} \partial x_{\bar{q}}} \right)_{\bar{p} \in P, \bar{q} \in Q} \right]_{pq}
\! \rightharpoonup \,
\mathrm{cof} \!
\left[ \left( \frac{\partial^2 u}{\partial x_{\bar{p}} \partial x_{\bar{q}}} \right)_{\bar{p} \in P, \bar{q} \in Q} \right]_{pq},
$$
weakly in $L^{k/(k-1)}(\Omega)$,
which is actually the induction hypothesis, together with the product of weakly and strongly converging sequences in the second.
The proof in case $u_n \rightharpoonup u$ weakly in $W^{2,k \wp}(\Omega)$ follows analogously.

{\it Step 2.}

Once convergence in the sense of distributions is proven we just need to realize that, when $u_n \in W^{2,k}(\Omega)$, then $\mathfrak{M}_k(D^2 u_n)$
is bounded in $L^1(\Omega)$ and consequently this sequence converges weakly$-*$ in $\mathcal{M}(\Omega)$. By uniqueness of weak limit we conclude.
Analogously, if $u_n \in W^{2,k \wp}(\Omega)$ then $\mathfrak{M}_k(D^2 u_n)$ is bounded in $L^\wp(\Omega)$. Thus the sequence converges
weakly in $L^\wp$ and by uniqueness of weak limit we conclude.
\end{proof}

\begin{proposition}\label{weakcont}
Let $2 \le k < N/2$.
Then $S_k[u]$ is weakly continuous in $L^{p^*}(\Omega)$, that is, if
$$
u_n \rightharpoonup u \qquad \text{weakly in} \quad W^{2,k p^*}(\Omega),
$$
then
$$
S_k[u_n] \rightharpoonup S[u] \qquad \text{weakly in} \quad L^{p^*}(\Omega).
$$
\end{proposition}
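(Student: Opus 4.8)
The plan is to deduce Proposition~\ref{weakcont} directly from Lemma~\ref{leminor}, by writing $S_k[u]$ as a finite linear combination of $k\times k$ principal minors of $D^2u$ and applying the lemma term by term. First I would recall that, by definition, $S_k[u]=\sigma_k(\Lambda(D^2u))$ is exactly the sum of the $k^{\mathrm{th}}$ principal minors of the Hessian matrix (this identification is stated in the introduction), so $S_k[u]=\sum_{|P|=k}\mathfrak{M}_k^{P}(D^2u)$ where $\mathfrak{M}_k^{P}$ denotes the principal minor obtained by selecting rows and columns indexed by $P\subset\{1,\dots,N\}$ with $|P|=k$. The number of such minors is $\binom{N}{k}$, a fixed finite constant.

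Next I would verify that the hypothesis of Proposition~\ref{weakcont} matches the hypothesis of the second part of Lemma~\ref{leminor}. We are assuming $u_n\rightharpoonup u$ weakly in $W^{2,kp^*}(\Omega)$; since $p^*=N(k+1)/[k(N+2)]$ satisfies $p^*>1$ (this is recorded in the excerpt as the sharp lower bound on $p^*$), we may take $\wp=p^*>1$ in Lemma~\ref{leminor}. The lemma then gives, for each fixed $P$,
$$
\mathfrak{M}_k^{P}(D^2 u_n)\rightharpoonup \mathfrak{M}_k^{P}(D^2 u)\qquad\text{weakly in }L^{p^*}(\Omega).
$$
Summing these finitely many weak convergences (weak convergence is preserved under finite linear combinations) yields
$$
S_k[u_n]=\sum_{|P|=k}\mathfrak{M}_k^{P}(D^2 u_n)\rightharpoonup \sum_{|P|=k}\mathfrak{M}_k^{P}(D^2 u)=S_k[u]\qquad\text{weakly in }L^{p^*}(\Omega),
$$
which is the claimed conclusion.

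There is no serious obstacle here: the substantive analytic content — the integration-by-parts/divergence-structure argument and the compensated-compactness reasoning that a product of a weakly convergent cofactor sequence with a strongly (Rellich--Kondrachov) convergent gradient sequence passes to the limit — has already been absorbed into Lemma~\ref{leminor}. The only points requiring a word of care are: (i) checking that $kp^*$ is large enough that the Sobolev space $W^{2,kp^*}(\Omega)$ is actually contained in the spaces needed to apply the lemma, i.e.\ that the embedding chain $W^{\alpha,2}_0(\Omega)\hookrightarrow W^{2,kp^*}_0(\Omega)$ used elsewhere is consistent with $\wp=p^*$; and (ii) noting that although Lemma~\ref{leminor} is stated for general (not necessarily principal) minors, the principal minors are a special case, so it applies verbatim. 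I would therefore present the proof in essentially three lines: expand $S_k[u]$ as the sum of principal $k$-minors, invoke the $L^{\wp}$-part of Lemma~\ref{leminor} with $\wp=p^*>1$ for each minor, and add up. (I would also remark in passing that the same argument with the weak-$*$ part of Lemma~\ref{leminor} shows $S_k[u_n]\overset{*}{\rightharpoonup}S_k[u]$ in $\mathcal{M}(\Omega)$ under mere weak $W^{2,k}$ convergence, which is the version needed in the critical and subcritical endpoint discussions.)
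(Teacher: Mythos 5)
Your proof is correct, but it follows a genuinely different route from the one in the paper. You expand $S_k[u]$ as the sum of the $\binom{N}{k}$ principal $k\times k$ minors of $D^2u$ and apply the $L^{\wp}$ part of Lemma~\ref{leminor} with $\wp=p^*>1$ to each minor; since the hypothesis $u_n\rightharpoonup u$ in $W^{2,kp^*}(\Omega)$ is verbatim the hypothesis of that part of the lemma, the conclusion follows by finite linearity of weak limits, and your point (i) is vacuous because no further embedding is needed. The paper instead tests against $v\in C_0^\infty(\Omega)$ and uses the divergence form to write $\int S_k[u]v\,dx=\frac1k\sum_{i,j}\int v_i u_j S^{ij}_k[u]\,dx$, invokes Lemma~\ref{leminor} only for the order-$(k-1)$ minors constituting $S^{ij}_k$ (weak convergence in $L^{\tilde p}$), pairs this with the strong $W^{1,\tilde q/2}$ convergence of $u_n$ supplied by Rellich--Kondrachov, and finally upgrades the resulting distributional convergence of $S_k[u_n]$ to weak $L^{p^*}$ convergence via boundedness plus uniqueness of weak limits. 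In effect the paper re-runs one step of the induction underlying Lemma~\ref{leminor} explicitly at top order; what that buys is the identity \eqref{eqweakcont} and the compensated-compactness structure $v_i u_j S_k^{ij}[u]$ that reappears in the weak-Hessian problem \eqref{weakdirichlet} and in the proof of Theorem~\ref{existwkh}, whereas your argument is shorter and treats the lemma as a black box. Your closing observation, that the weak-$*$ part of Lemma~\ref{leminor} gives convergence in $\mathcal{M}(\Omega)$ under mere weak $W^{2,k}$ convergence, is likewise consistent with what the paper records separately in Theorem~\ref{weakcont2}.
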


\begin{proof}
Consider $u \in W^{2,k p^*}(\Omega)$ and $v \in C^\infty_0(\Omega)$.
We compute
\begin{equation}\label{eqweakcont}
\int S_k[u]v \, dx = \frac{1}{k} \sum_{i,j} \int v_i u_j S^{ij}_k[u] \, dx,
\end{equation}
where we have used integration by parts and the divergence form of the $k-$Hessian.
Sobolev embeddings and H\"{o}lder inequalities reveal that both hands of~\eqref{eqweakcont} are well defined.
Note also that $\{u_n\}_n$ is bounded in $W^{2,k p^*}(\Omega)$ and consequently
$$
u_n \rightarrow u \qquad \text{strongly in} \quad W^{1,\tilde{q}/2}(\Omega),
$$
by the Rellich-Kondrachov theorem.
Therefore, by product of weakly and strongly converging sequences, we get
$$
\lim_{n \to \infty} \int v_i \left(u_n\right)_j S^{ij}_k[u_n] \, dx = \int v_i u_j S^{ij}_k[u] \, dx,
$$
since $u_j \in L^{\tilde{q}/2}(\Omega)$, $S^{ij}_k[u_n] \in L^{\tilde{p}}(\Omega)$ and $\frac{2}{\tilde{q}}+\frac{1}{\tilde{p}}=1$,
and $S^{ij}_k[u_n] \rightharpoonup S^{ij}_k[u]$ weakly in $L^{\tilde{p}}(\Omega)$, which is a consequence
of lemma~\ref{leminor} and the fact that $S^{ij}_k[u_n]$ is a linear combination of minors of order $k-1$.
Now by~\eqref{eqweakcont} we find
$$
\lim_{n \to \infty} \int S_k[u_n]v \, dx = \int S_k[u]v \, dx.
$$
Since $S_k[u_n]$ is bounded in $L^{p^*}(\Omega)$ it admits a weakly converging subsequence in this space,
and by uniqueness of weak limit we conclude.
\end{proof}

The following result is the counterpart of theorem~\ref{existmul} in the range $2 \le k < N/2$; in fact, the existence of the solution corresponding to
the local minimum of our functional was already proven in theorem~\ref{existuni}. The main difficulty in proving the existence of the \emph{mountain
pass} critical point was the lack of compactness to prove the Palais-Smale condition invoking the Arzel\`a-Ascoli theorem,
see section~\ref{existunisec}. In the present case, the existence of the \emph{mountain pass} sort of solution follows from an argument
akin to that in section~\ref{existmulsec} but replacing the Arzel\`a-Ascoli theorem by the weak continuity proven in proposition~\ref{weakcont}.

\begin{theorem}\label{existwc}
There exist a $\lambda_0 > 0$ such that for $0 < |\lambda| < \lambda_0$ and $2 \le k < N/2$ problem~\eqref{dirichlet}
has at least two solutions.
\end{theorem}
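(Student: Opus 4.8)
The plan is to mirror the two-part strategy of the proof of Theorem~\ref{existmul}: one solution arises as a local minimum of a truncated functional, and the other as a \emph{mountain pass} critical point, with the weak continuity of $S_k$ (Proposition~\ref{weakcont}) playing the role that Arzel\`a--Ascoli played in the range $N/2 < k \le N$. The local minimum $u_m$ is already in hand: Theorem~\ref{existuni} produces, for $0 < |\lambda| < \lambda_1$ with $2 \le k \le N/2$, an isolated solution $u_m$ obtained from the strict convexity of the truncated functional $H[u]$ of Lemma~\ref{funch} on the ball $\|\Delta^{\lfloor \alpha/2\rfloor}\nabla^{2(\alpha/2-\lfloor\alpha/2\rfloor)}u\|_2 \le R_0$. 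So the substance of the proof is the construction of the second, \emph{mountain pass}, solution, and the verification that it is distinct from $u_m$.

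First I would record that the mountain pass geometry is already available: by Proposition~\ref{downbound} (Step~2, the case $2 \le k < N/2$) the functional $J[u]$ is bounded below by the radial profile $g(R) = \tfrac12 R^2 - C_1 R - C_2 R^{k+1}$, which for $|\lambda|$ small attains a strictly positive maximum at some $R_M > R_0$; and by Corollary~\ref{corunbound} there is a point $v$ far out in $W_0^{\alpha,2}(\Omega)$ with $J[v] < J[u_m] < 0$. Hence with $\Gamma$ the set of paths from $u_m$ to $v$ and $K = \inf_{\gamma \in \Gamma}\max_{t\in[0,1]} J[\gamma(t)] > 0$, Ekeland's variational principle furnishes a Palais--Smale sequence $\{u_n\}$ at level $K$. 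Next I would establish that this sequence is bounded in $W_0^{\alpha,2}(\Omega)$ by the same algebraic manipulation as in Theorem~\ref{existmul}: estimate $K + o(1) = J[u_n] - \tfrac{1}{k+1}\langle J'(u_n), u_n\rangle + \tfrac{1}{k+1}\langle y_n, u_n\rangle$ from below, using that the $u\, S_k[u]$ term cancels up to the factor $(\tfrac12 - \tfrac1{k+1}) > 0$ and controlling the $\lambda f$ term by H\"older with exponents $p^\ast, q^\ast$ and the Sobolev embedding $W_0^{\alpha,2}(\Omega)\hookrightarrow L^{q^\ast}(\Omega)$ from Lemma~\ref{definition}(b); for $|\lambda|$ small this forces $\|u_n\|_{W_0^{\alpha,2}}$ to stay bounded.

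The heart of the argument — and the step I expect to be the main obstacle — is upgrading boundedness of $\{u_n\}$ to strong convergence, i.e.\ the Palais--Smale condition at level $K$, without the uniform convergence property (B) used in Proposition~\ref{palais}. Passing to a subsequence, $u_n \rightharpoonup u$ weakly in $W_0^{\alpha,2}(\Omega)$, hence $u_n \rightharpoonup u$ weakly in $W^{2,kp^\ast}(\Omega)$. Pairing $J'[u_n]$ with $u_n - u$ gives
\begin{eqnarray}\nonumber
& & \int \Delta^{\lfloor\alpha/2\rfloor}\nabla^{2(\alpha/2-\lfloor\alpha/2\rfloor)} u_n \,\, \Delta^{\lfloor\alpha/2\rfloor}\nabla^{2(\alpha/2-\lfloor\alpha/2\rfloor)}(u_n - u)\, dx \\ \nonumber
&=& (-1)^k \int S_k[u_n](u_n - u)\, dx + \lambda \int f(u_n - u)\, dx + \langle y_n, u_n - u\rangle.
\end{eqnarray}
The term $\langle y_n, u_n - u\rangle \to 0$ since $y_n \to 0$ in $W^{-\alpha,2}$ and $\{u_n-u\}$ is bounded; the $\lambda f$ term tends to zero because $u_n \to u$ strongly in $L^{q^\ast}(\Omega)$ by Rellich--Kondrachov (the embedding $W^{\alpha,2}\hookrightarrow L^{q^\ast}$ being strictly subcritical). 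For the $S_k$ term one uses Proposition~\ref{weakcont}: $S_k[u_n] \rightharpoonup S_k[u]$ weakly in $L^{p^\ast}(\Omega)$, while $u_n \to u$ strongly in $L^{q^\ast}(\Omega) = L^{(p^\ast)'}(\Omega)$, so the product converges and $\int S_k[u_n](u_n-u)\,dx \to 0$. Thus the left-hand side tends to zero, and subtracting $\int \Delta^{\lfloor\alpha/2\rfloor}\nabla^{2(\alpha/2-\lfloor\alpha/2\rfloor)}u \,\, \Delta^{\lfloor\alpha/2\rfloor}\nabla^{2(\alpha/2-\lfloor\alpha/2\rfloor)}(u_n - u)\,dx \to 0$ (which holds by weak convergence) yields $\|\Delta^{\lfloor\alpha/2\rfloor}\nabla^{2(\alpha/2-\lfloor\alpha/2\rfloor)}(u_n - u)\|_2 \to 0$, i.e.\ $u_n \to u_\star$ strongly. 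Then $J[u_\star] = K > 0$ and $J'[u_\star] = 0$, so $u_\star$ solves~\eqref{dirichlet}; since $J[u_m] < 0 < J[u_\star]$, the two solutions are distinct and differ in $W_0^{\alpha,2}(\Omega)$ norm. Setting $\lambda_0 = \min\{\lambda_1, \text{(the threshold above)}\}$ completes the proof. The delicate point to get right is the exact matching of conjugate exponents — that $q^\ast = (p^\ast)'$ and that the embedding $W_0^{\alpha,2}(\Omega)\hookrightarrow L^{q^\ast}(\Omega)$ is compact rather than merely continuous for the chosen $\alpha$ — and confirming that the weak-limit identity $J'[u_\star]=0$ survives the truncation bookkeeping exactly as in Theorem~\ref{existmul}.
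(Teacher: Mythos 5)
Your Steps 1 and 2 (mountain pass geometry, Ekeland, boundedness of the Palais--Smale sequence) coincide with the paper's. The divergence, and the problem, is in Step 3: you try to upgrade to the \emph{full} Palais--Smale condition by pairing $J'[u_n]$ with $u_n-u$, and the key ingredient you invoke --- that $u_n\to u$ strongly in $L^{q^*}(\Omega)$ because ``the embedding $W_0^{\alpha,2}(\Omega)\hookrightarrow L^{q^*}(\Omega)$ is strictly subcritical'' --- is false in general. By construction $\alpha=\lceil (Nk-N+4k)/(2k+2)\rceil$ is the \emph{smallest} integer for which $W_0^{\alpha,2}\hookrightarrow W_0^{2,kp^*}\hookrightarrow L^{q^*}$ holds, and a direct computation shows that $q^*=N(k+1)/(N-2k)$ equals the critical Sobolev exponent $2N/(N-2\alpha)$ exactly when $(Nk-N+4k)/(2k+2)$ is an integer. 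For instance $N=10$, $k=3$ gives $\alpha=4$ and $q^*=10=2N/(N-2\alpha)$, so the embedding is critical and not compact. In those cases Rellich--Kondrachov does not apply, $u_n\rightharpoonup u$ in $L^{q^*}$ only weakly, and the term $\int S_k[u_n](u_n-u)\,dx$ is a pairing of two merely weakly convergent sequences ($S_k[u_n]$ bounded in $L^{p^*}$, $u_n-u\rightharpoonup 0$ in $L^{q^*}$) which need not vanish. You flagged this yourself as ``the delicate point to get right''; it does not go through. (The $\lambda f$ term is fine: a fixed $f\in L^{p^*}$ against $u_n-u\rightharpoonup 0$ in $L^{q^*}$ suffices, no compactness needed.)

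The paper avoids this entirely by \emph{not} proving strong convergence. It only passes to the weak limit in the Euler--Lagrange equation: for each \emph{fixed} test function $w\in W_0^{\alpha,2}(\Omega)\subset L^{q^*}(\Omega)$ one has $\langle J'[u_n],w\rangle\to\langle J'[u_\star],w\rangle$, because Proposition~\ref{weakcont} gives $S_k[u_n]\rightharpoonup S_k[u_\star]$ weakly in $L^{p^*}(\Omega)$ and $w$ is a fixed element of the dual $L^{q^*}(\Omega)$; hence $J'[u_\star]=0$ and $u_\star$ solves \eqref{dirichlet}. The price of this weaker route is that one cannot conclude $J[u_\star]=K$, so your clean separation ``$J[u_m]<0<K=J[u_\star]$'' is not available; the paper instead argues that $u_\star\neq u_m$ via the mountain pass geometry being independent of the local minimum. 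If you want to keep your strong-convergence argument, you would have to either restrict to the parameter values where the ceiling strictly increases $\alpha$ (making the embedding subcritical and compact, in which case your proof is correct and in fact yields the stronger conclusion $J[u_\star]=K$), or supply a genuinely different mechanism (e.g.\ a compensated-compactness argument) for $\int S_k[u_n](u_n-u)\,dx\to 0$ at the critical exponent.
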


\begin{proof}
{\it Step 1.}

As in the case of theorem~\ref{existmul}, the existence of a Palais-Smale sequence to level $K$, that is a sequence
$\{u_n\}_{n\in \mathbb{N}}\subset W^{\alpha,2}_0(\Omega)$ that fulfills
\begin{itemize}
\item $J[u_n] \to K$ when $n \to \infty$,
\item $J'[u_n] \to 0$ in $W^{-\alpha,2}(\Omega)$,
\end{itemize}
follows from Ekeland's variational principle~\cite{ekeland2}. As in proposition~\ref{palais}, we consider the weak form of our equation
$$
(-1)^\alpha \Delta^\alpha u_n = (-1)^k S_k[u_n] + \lambda f + y_n, \qquad y_n \xrightarrow[n \to \infty]{} 0 \quad \mathrm{in} \quad W^{-\alpha,2}(\Omega).
$$

{\it Step 2.}

The next step is proving that all Palais-Smale sequences to level
$K$ are uniformly bounded in $W_0^{\alpha,2}(\Omega)$.
A combination of integration by parts, H\"older, Sobolev and duality inequalities yields
\begin{eqnarray} \nonumber
K + o(1) &=& J[u_n] -\frac{1}{k+1} \langle J'(u_n), u_n \rangle + \frac{1}{k+1} \langle y_n, u_n \rangle
\\ \nonumber
&\ge& \left( \frac{1}{2}-\frac{1}{k+1} \right) \int \left| \Delta ^{\lfloor \alpha/2 \rfloor}
\nabla^{2\left(\alpha/2-\lfloor \alpha/2 \rfloor \right)} u \right|^2 dx -|\lambda| C'
\\ \nonumber
& & \times \left(1-\frac{1}{k+1} \right) \|f\|_{p^*}
\left( \int \left| \Delta ^{\lfloor \alpha/2 \rfloor}
\nabla^{2\left(\alpha/2-\lfloor \alpha/2 \rfloor \right)} u \right|^2 dx \right)^{1/2}
\\ \nonumber
& & -\frac{1}{k+1} C'' \|y_n\|_{W^{-\alpha,2}(\Omega)} \! \left( \int \left| \Delta ^{\lfloor \alpha/2 \rfloor}
\nabla^{2\left(\alpha/2-\lfloor \alpha/2 \rfloor \right)} u \right|^2 dx \right)^{1/2} \! ,
\end{eqnarray}
for suitable positive constants $C'$ and $C''$.
For small enough $|\lambda|$ this proves the sequence is bounded.

{\it Step 3.}

Since we proved in Step $2$ that the Palais-Smale sequences are uniformly bounded
we can invoke weak continuity to find
$$
\lim_{n \to \infty} \left\langle -\lambda f + (-1)^\alpha \Delta^\alpha u_n - (-1)^k S_k[u_n], w \right\rangle =0,
$$
for all $w \in W_0^{\alpha,2}(\Omega)$ implies
\begin{itemize}
\item[$\boldsymbol{\ast}$] $J'[u_\star] = \lim_{n \to \infty} J'[u_n] = 0$,
\end{itemize}
or equivalently
$$
(-1)^\alpha \Delta^\alpha u_\star = (-1)^k S_k[u_\star] + \lambda f,
$$
for $u_\star \in W^{\alpha,2}_0(\Omega)$.
Note that $u_\star$ is necessarily different from $u_m$ since the mountain pass geometry is independent of the presence of the local minimum,
as a construction akin to that in step $1$ of theorem~\ref{existmul} guarantees.
Therefore the existence of a second variational solution follows from theorem~\ref{weakcont},
and the embedding $W_0^{\alpha,2}(\Omega) \subset L^{q^*}(\Omega)$.
\end{proof}

\subsection{Existence and multiplicity results for $k = N/2$}

The existence and multiplicity results have now been proven for all possible cases except for the borderline one $k=N/2$. The particularity of this
problem is that it requires the use of harmonic analytical tools as in its characterization as an Euler-Lagrange equation of a suitable functional
in section~\ref{variationalset}. Our approach requires expanding the functional setting presented so far.

\begin{definition}
We define $\mathrm{vmo}_z(\bar{\Omega})$ as the closure of $C_0(\Omega)$ in $\mathrm{bmo}_z(\bar{\Omega})$,
with $\| f \|_{\mathrm{vmo}_z(\bar{\Omega})}=\| f \|_{\mathrm{bmo}_z(\bar{\Omega})} \, \forall \, f \in \mathrm{vmo}_z(\bar{\Omega})$.
\end{definition}

We begin stating a result that concerns the weak continuity of the nonlinearity under study.
In particular, note that it is not enough invoking lemma~\ref{leminor} above.

\begin{proposition}\label{weakcontast}
Let $k = N/2$.
Then $S_k[u]$ is weakly$-*$ continuous in $h^1_r(\Omega)$, that is, if
$$
u_n \rightharpoonup u \qquad \text{weakly in} \quad W_0^{2,N/2}(\Omega),
$$
then
$$
S_k[u_n] \overset{*}{\rightharpoonup} S[u] \qquad \text{weakly$-*$ in} \quad h^1_r(\Omega).
$$
\end{proposition}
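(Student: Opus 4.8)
The plan is to use the same div--curl (Coifman--Lions--Meyer--Semmes) mechanism underlying Lemma~\ref{hardyz}, now in quantitative form, together with the distributional weak$-*$ continuity of minors established in Lemma~\ref{leminor}. Throughout, $h^1_r(\Omega)$ is regarded as the dual of $\mathrm{vmo}_z(\bar\Omega)$ (the domain analogue of the classical identity $(\mathrm{VMO})^*=\mathcal H^1$; cf.~\cite{der}), so that $S_k[u_n]\overset{*}{\rightharpoonup}S_k[u]$ in $h^1_r(\Omega)$ means $\langle S_k[u_n],\phi\rangle\to\langle S_k[u],\phi\rangle$ for every $\phi\in\mathrm{vmo}_z(\bar\Omega)$, and by the very definition of $\mathrm{vmo}_z(\bar\Omega)$ it is enough to test against the dense subspace $C_0(\Omega)$.

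\emph{Step 1: a uniform bound.} Since a weakly convergent sequence is bounded, $\{u_n\}$ is bounded in $W^{2,N/2}_0(\Omega)$, so, arguing as in the proof of Lemma~\ref{hardyz}, the zero extensions $\{\tilde u_n\}$ are bounded in $W^{2,N/2}(\mathbb R^N)$. The estimate invoked there is quantitative: $\|S_{N/2}[v]\|_{\mathcal H^1(\mathbb R^N)}\le C\,\|D^2v\|_{L^{N/2}(\mathbb R^N)}^{N/2}$ for $v\in W^{2,N/2}(\mathbb R^N)$ (see~\cite{grafakos1,grafakos,coifman}); equivalently, $v\mapsto S_{N/2}[v]$ maps bounded subsets of $W^{2,N/2}(\mathbb R^N)$ into bounded subsets of $\mathcal H^1(\mathbb R^N)$. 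Hence $\sup_n\|S_{N/2}[\tilde u_n]\|_{\mathcal H^1(\mathbb R^N)}<\infty$; since $\mathcal H^1(\mathbb R^N)\subset h^1(\mathbb R^N)$ and each $S_{N/2}[\tilde u_n]$ is supported in $\bar\Omega$, this gives $\sup_n\|S_k[u_n]\|_{h^1_z(\bar\Omega)}<\infty$ and, via the continuous inclusion $h^1_z(\bar\Omega)\subset h^1_r(\Omega)$, also $\sup_n\|S_k[u_n]\|_{h^1_r(\Omega)}<\infty$. By Lemma~\ref{hardyz} applied to $u$, moreover $S_k[u]\in h^1_z(\bar\Omega)\subset h^1_r(\Omega)$, so every object below is a genuine element of $h^1_r(\Omega)$.

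\emph{Step 2: identification of the limit and conclusion.} Because $W^{2,N/2}_0(\Omega)=W^{2,k}_0(\Omega)\hookrightarrow W^{2,k}(\Omega)$, the hypothesis $u_n\rightharpoonup u$ in $W^{2,N/2}_0(\Omega)$ gives $u_n\rightharpoonup u$ weakly in $W^{2,k}(\Omega)$, and since $S_k[u]$ is a sum of principal minors of order $k$ of $D^2u$, Lemma~\ref{leminor} yields $S_k[u_n]\overset{*}{\rightharpoonup}S_k[u]$ in $\mathcal M(\Omega)$, i.e.
\[
\int S_k[u_n]\,\varphi\,dx\ \longrightarrow\ \int S_k[u]\,\varphi\,dx\qquad\text{for all }\varphi\in C_0(\Omega).
\]
A density argument now upgrades this to weak$-*$ convergence in $h^1_r(\Omega)$: given $\phi\in\mathrm{vmo}_z(\bar\Omega)$ and $\varepsilon>0$, choose $\varphi\in C_0(\Omega)$ with $\|\phi-\varphi\|_{\mathrm{bmo}_z(\bar\Omega)}<\varepsilon$; then
\[
|\langle S_k[u_n]-S_k[u],\phi\rangle|\le|\langle S_k[u_n]-S_k[u],\varphi\rangle|+\Big(\sup_n\|S_k[u_n]\|_{h^1_r(\Omega)}+\|S_k[u]\|_{h^1_r(\Omega)}\Big)\,\varepsilon,
\]
the first term tending to $0$ by the previous display and the second being $O(\varepsilon)$ by Step~1; letting $n\to\infty$ and then $\varepsilon\to0$ proves the claim.

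I expect the main difficulty to be of a bookkeeping rather than a conceptual nature: one must make sure the Coifman--Lions--Meyer--Semmes estimate for the $k$-Hessian is used in its \emph{quantitative} form (uniform $\mathcal H^1$-control on bounded sets, not merely the membership statement of Lemma~\ref{hardyz}), and, more delicately, one must reconcile the two a priori different notions of weak$-*$ convergence in play: the measure-theoretic one furnished by Lemma~\ref{leminor} (duality with $C_0(\Omega)$) and the functional-analytic one in $h^1_r(\Omega)$ (duality with $\mathrm{vmo}_z(\bar\Omega)$). The bridge is precisely the uniform bound of Step~1 together with the density of $C_0(\Omega)$ in $\mathrm{vmo}_z(\bar\Omega)$ — which is why $\mathrm{vmo}_z(\bar\Omega)$ was introduced just before the statement — and one must also keep track of the compatibility of the various local Hardy and bmo norms and pairings on $\mathbb R^N$, $\bar\Omega$ and $\Omega$.
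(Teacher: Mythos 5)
Your proof is correct and follows essentially the same route as the paper: weak$-*$ convergence in $\mathcal{M}(\Omega)$ from Lemma~\ref{leminor}, upgraded to weak$-*$ convergence in $h^1_r(\Omega)$ by testing against the dense subspace $C_0(\Omega)$ of $\mathrm{vmo}_z(\bar{\Omega})$ and controlling the remainder with a uniform $h^1_r(\Omega)$ bound on $S_k[u_n]$. Your Step~1 merely makes explicit the quantitative Coifman--Lions--Meyer--Semmes estimate that the paper leaves implicit in its final $\limsup \le C\epsilon$ inequality.
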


\begin{proof}
First of all note that $S_k[u_n], S_k[u] \in h^1_r(\Omega)$ for $u_n, u \in W_0^{2,N/2}(\Omega)$ as a consequence of lemma~\ref{hardyz}
and the inclusion $h^1_z(\bar{\Omega}) \subset h^1_r(\Omega)$~\cite{der}.
The statement of the proposition says that
$$
\lim_{n \to \infty} \int w \, S_k[u_n] \, dx = \int w \, S_k[u] \, dx,
$$
for all $w \in \mathrm{vmo}_z(\bar{\Omega})$, since $[\mathrm{vmo}_z(\bar{\Omega})]^*=h^1_r(\Omega)$.
Given that $C_0(\Omega)$ is dense in $\mathrm{vmo}_z(\bar{\Omega})$ we may choose
an approximating family $w_\epsilon$ of $w$ such that
$\| w-w_\epsilon \|_{\mathrm{vmo}_z(\bar{\Omega})} \le \epsilon$ for any $\epsilon >0$.
Now we compute
\begin{eqnarray}\nonumber
\int w \, S_k[u_n] \, dx - \int w \, S_k[u] \, dx &=& \int w_\epsilon \, S_k[u_n] \, dx - \int w_\epsilon \, S_k[u] \, dx \\ \nonumber
& & + \int (w-w_\epsilon) \, S_k[u_n] \, dx \\ \nonumber & & - \int (w-w_\epsilon) \, S_k[u] \, dx.
\end{eqnarray}
Taking into account that, if
$$
u_n \rightharpoonup u \qquad \text{weakly in} \quad W^{2,N/2}(\Omega),
$$
then
$$
S_k[u_n] \overset{*}{\rightharpoonup} S[u] \qquad \text{weakly$-*$ in} \quad \mathcal{M}(\Omega),
$$
which is a consequence of lemma~\ref{leminor} and the fact that $S_k[u_n]$ is a linear combination of minors of order $k$ of the Hessian matrix of $u$,
we find that
\begin{eqnarray}\nonumber
\left| \int w \, S_k[u_n] \, dx - \int w \, S_k[u] \, dx \right| &\le&
\left( \| S_k[u_n] \|_{h^1_r(\Omega)} + \| S_k[u] \|_{h^1_r(\Omega)} \right) \\ \nonumber
& & \times \| w-w_\epsilon \|_{\mathrm{vmo}_z(\bar{\Omega})} \\ \nonumber
& & + \left| \int w_\epsilon \, S_k[u_n] \, dx - \int w_\epsilon \, S_k[u] \, dx \right|,
\end{eqnarray}
and
\begin{equation}\nonumber
\limsup_{n \to \infty} \left| \int w \, S_k[u_n] \, dx - \int w \, S_k[u] \, dx \right| \le C \epsilon.
\end{equation}
The arbitrariness of $\epsilon$ concludes the proof.
\end{proof}

\begin{theorem}\label{existwc2}
There exist a $\lambda_0 > 0$ such that for $0 < |\lambda| < \lambda_0$ and $k = N/2$ problem~\eqref{dirichlet}
has at least two solutions.
\end{theorem}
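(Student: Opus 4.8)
The plan is to transcribe almost verbatim the proof of Theorem~\ref{existwc}, replacing the Lebesgue--space Hölder estimates by the $h^1$--bmo duality inequalities of Lemma~\ref{localhardy} and the weak continuity of Proposition~\ref{weakcont} by the weak--$*$ continuity of Proposition~\ref{weakcontast}. Observe first that the solution sitting at a local minimum is already in hand: Theorem~\ref{existuni}, applied with $2\le k\le N/2$ and $0<|\lambda|<\lambda_1$, produces an isolated local minimum $u_m$ of $J[u]$ at a negative level, and the \emph{mountain pass} geometry around $u_m$ was established once and for all in Section~\ref{geometryj} --- Proposition~\ref{downbound}, whose Step~3 is precisely the case $k=N/2$ treated by duality, furnishes the radial lower bound $G[u]$, while Corollary~\ref{corunbound} supplies points at which $J$ is negative. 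Hence only the existence of a second, \emph{mountain pass} solution remains to be shown.

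To that end I would fix $\lambda_0\le\lambda_1$, shrinking it further if necessary, so small that $g(R)=G[u]$ attains a strictly positive maximum at some $R_M>0$; then I choose $v\in W_0^{\alpha,2}(\Omega)$ with $\|\Delta^{\lfloor\alpha/2\rfloor}\nabla^{2(\alpha/2-\lfloor\alpha/2\rfloor)}v\|_2$ large and $J[v]<J[u_m]$, and form the minimax level $K=\inf_{\gamma\in\Gamma}\max_{t\in[0,1]}J[\gamma(t)]>0$ over the paths $\gamma\in C([0,1],W_0^{\alpha,2}(\Omega))$ joining $u_m$ to $v$. Ekeland's variational principle then supplies a Palais--Smale sequence $\{u_n\}$ at level $K$, that is $J[u_n]\to K$ and $(-1)^\alpha\Delta^\alpha u_n=(-1)^k S_k[u_n]+\lambda f+y_n$ with $y_n\to 0$ in $W^{-\alpha,2}(\Omega)$. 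To bound $\{u_n\}$ I would use, exactly as in Step~2 of Theorem~\ref{existwc},
$$
K+o(1)=J[u_n]-\frac{1}{k+1}\langle J'(u_n),u_n\rangle+\frac{1}{k+1}\langle y_n,u_n\rangle ,
$$
estimating the right--hand side from below by $\bigl(\tfrac{1}{2}-\tfrac{1}{k+1}\bigr)\|\Delta^{\lfloor\alpha/2\rfloor}\nabla^{2(\alpha/2-\lfloor\alpha/2\rfloor)}u_n\|_2^2$ minus terms linear in that norm; here the products $\int f u_n$ and $\int u_n S_k[u_n]$ are controlled by the dual pairings $\|f\|_{h^1_r(\Omega)}\|u_n\|_{\mathrm{bmo}_z(\bar{\Omega})}$ and $\|S_k[u_n]\|_{h^1_z(\bar{\Omega})}\|u_n\|_{\mathrm{bmo}_r(\Omega)}$, together with the continuous inclusions $W_0^{\alpha,2}(\Omega)\subset\mathrm{bmo}_z(\bar{\Omega})$, $W_0^{\alpha,2}(\Omega)\subset\mathrm{bmo}_r(\Omega)$ and the implication $u\in W_0^{\alpha,2}(\Omega)\Rightarrow S_k[u]\in h^1_z(\bar{\Omega})$ extracted from the proof of Lemma~\ref{localhardy}. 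For $|\lambda|$ small enough this yields a uniform bound on $\{u_n\}$ in $W_0^{\alpha,2}(\Omega)$.

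The bounded sequence then has a subsequence with $u_n\rightharpoonup u_\star$ weakly in $W_0^{\alpha,2}(\Omega)=W_0^{N/2,2}(\Omega)$, hence weakly in $W_0^{2,N/2}(\Omega)$ by the Sobolev embedding used in Lemma~\ref{localhardy}, so Proposition~\ref{weakcontast} gives $S_k[u_n]\overset{*}{\rightharpoonup}S_k[u_\star]$ weakly--$*$ in $h^1_r(\Omega)$. Since $C_0^\infty(\Omega)$ is dense in $W_0^{\alpha,2}(\Omega)$ and the embedding $W_0^{\alpha,2}(\Omega)\hookrightarrow\mathrm{bmo}_z(\bar{\Omega})$ is continuous, every test function $w\in W_0^{\alpha,2}(\Omega)$ lies in $\mathrm{vmo}_z(\bar{\Omega})$, the predual of $h^1_r(\Omega)$; passing to the limit in $\langle J'(u_n),w\rangle$ --- its polyharmonic term converging by weak convergence in $W_0^{\alpha,2}(\Omega)$ and its $S_k$ term by Proposition~\ref{weakcontast} --- I obtain $\langle J'(u_\star),w\rangle=0$ for all such $w$, i.e. $(-1)^\alpha\Delta^\alpha u_\star=(-1)^k S_k[u_\star]+\lambda f$ in $W^{-\alpha,2}(\Omega)$. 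Finally, because the \emph{mountain pass} geometry does not involve $u_m$ --- it survives the cutoff construction of Step~1 of Theorem~\ref{existmul} --- one has $J[u_\star]=K>0>J[u_m]$, so $u_\star\neq u_m$, and the two solutions differ in $W_0^{\alpha,2}(\Omega)$ norm; the proof closes exactly as in Theorem~\ref{existwc}.

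The step I expect to be the genuine obstacle, and the only one truly new with respect to the cases $N/2<k\le N$ and $2\le k<N/2$, is the harmonic--analytic bookkeeping: one must make sure that the natural test space $W_0^{\alpha,2}(\Omega)$ embeds into the predual $\mathrm{vmo}_z(\bar{\Omega})$ of $h^1_r(\Omega)$, so that the weak--$*$ convergence of $S_k[u_n]$ can legitimately be tested against it, and that in the Palais--Smale boundedness estimate every nonlinear product is tamed by $h^1$--bmo duality, the $L^\infty$ bound being unavailable in this borderline situation. Once these two facts are secured, everything else is a routine rerun of the arguments already carried out.
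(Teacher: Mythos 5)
Your proposal is correct and follows essentially the same route as the paper: Ekeland's variational principle for the Palais--Smale sequence, the boundedness estimate via the $h^1$--$\mathrm{bmo}$ duality pairings of Lemma~\ref{localhardy}, passage to the limit through the weak--$*$ continuity of Proposition~\ref{weakcontast} tested against $\mathrm{vmo}_z(\bar{\Omega})$, and the distinctness of $u_\star$ from $u_m$ via the persistence of the mountain pass geometry. The only cosmetic caveat is that with merely weak convergence one should not assert $J[u_\star]=K$ (only $J'[u_\star]=0$ is obtained), but you also give the paper's actual separation argument, so nothing is lost.
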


\begin{proof}
{\it Step 1.}

As in previous cases, the existence of a Palais-Smale sequence to level $K$, that is a sequence
$\{u_n\}_{n\in \mathbb{N}}\subset W^{\alpha,2}_0(\Omega)$ that fulfills
\begin{itemize}
\item $J[u_n] \to K$ when $n \to \infty$,
\item $J'[u_n] \to 0$ in $W^{-\alpha,2}(\Omega)$,
\end{itemize}
follows from Ekeland's variational principle~\cite{ekeland2}. As in proposition~\ref{palais}, we consider the weak form of our equation
$$
(-1)^\alpha \Delta^\alpha u_n = (-1)^k S_k[u_n] + \lambda f + y_n, \qquad y_n \xrightarrow[n \to \infty]{} 0 \quad \mathrm{in} \quad W^{-\alpha,2}(\Omega).
$$

{\it Step 2.}

Now we show that all Palais-Smale sequences to level
$K$ are uniformly bounded in $W_0^{N/2,2}(\Omega)$.
By means of integration by parts, H\"older, Sobolev and duality inequalities we get
\begin{eqnarray} \nonumber
K + o(1) &=& J[u_n] -\frac{1}{k+1} \langle J'(u_n), u_n \rangle + \frac{1}{k+1} \langle y_n, u_n \rangle
\\ \nonumber
&\ge& \left( \frac{1}{2}-\frac{1}{k+1} \right) \int \left| \Delta ^{\lfloor \alpha/2 \rfloor}
\nabla^{2\left(\alpha/2-\lfloor \alpha/2 \rfloor \right)} u \right|^2 dx -|\lambda| C'
\\ \nonumber
& & \times \left(1-\frac{1}{k+1} \right) \|f\|_{h^1_r(\Omega)}
\left( \int \left| \Delta ^{\lfloor \alpha/2 \rfloor}
\nabla^{2\left(\alpha/2-\lfloor \alpha/2 \rfloor \right)} u \right|^2 dx \right)^{1/2}
\\ \nonumber
& & -\frac{1}{k+1} C'' \|y_n\|_{W^{-\alpha,2}(\Omega)} \! \left( \int \left| \Delta ^{\lfloor \alpha/2 \rfloor}
\nabla^{2\left(\alpha/2-\lfloor \alpha/2 \rfloor \right)} u \right|^2 dx \right)^{1/2} \! ,
\end{eqnarray}
for suitable positive constants $C'$ and $C''$.
For a sufficiently small $|\lambda|$ this implies the sequence is bounded.

{\it Step 3.}

As shown in the previous step the Palais-Smale sequences
to level $K$ are bounded, therefore the existence of a variational solution follows from the weak$-*$ continuity property of theorem~\ref{weakcontast},
the embedding $W_0^{N/2,2}(\Omega) \subset \mathrm{vmo}_z(\bar{\Omega})$ and
$$
\lim_{n \to \infty} \left\langle -\lambda f + (-1)^\alpha \Delta^\alpha u_n - (-1)^k S_k[u_n], w \right\rangle =0,
$$
for all $w \in W_0^{\alpha,2}(\Omega)$.
In other words
\begin{itemize}
\item[$\boldsymbol{\ast}$] $J'[u_\star] = \lim_{n \to \infty} J'[u_n] =0$,
\end{itemize}
or in equivalent terms
$$
(-1)^\alpha \Delta^\alpha u_\star = (-1)^k S_k[u_\star] + \lambda f,
$$
for $u_\star \in W^{N/2,2}_0(\Omega)$.
Finally note that $u_\star$ must be different from $u_m$ because the mountain pass geometry
is independent of the existence of such local minimum as a construction in the lines of that
in step $1$ of theorem~\ref{existmul} shows.
\end{proof}

\subsection{Weaker solutions: distributional divergence}

Our previous results also suggest the possibility of using weaker notions of solution. In particular we focus now on the following
boundary value problem
\begin{eqnarray}\label{weakdirichlet}
(-1)^\alpha \Delta^\alpha u = \frac{(-1)^k}{k}\sum_{i,j} \eth_{x_i}(u_{x_j} S_k^{ij}[u]) + \lambda f,
\qquad x &\in& \Omega \subset \mathbb{R}^N, \\ \nonumber
u = \partial_n u = \partial_n^2 u = \cdots = \partial_n^{\alpha-1} u = 0,
\qquad x &\in& \partial \Omega,
\end{eqnarray}
where $\eth_{x_i}$ denotes a weak derivative with respect to variable $x_i$. The existence of solutions to this problem
runs in parallel to the theory developed for problem~\eqref{dirichlet}.

\begin{remark}
Note that the value of $\alpha$ chosen coincides with the one employed in the range $2 \le k \le N/2$ for problem~\eqref{dirichlet}.
On the other hand, we also have that
$$
\alpha \le \left\lceil 2+ \frac{k-2}{2k}N \right\rceil,
$$
and it is easy to check that the inequality is strict for certain values of the parameters, so the existence result for~\eqref{weakdirichlet}
is genuinely different from the existence result for~\eqref{dirichlet} in the rank $N/2 < k \le N$. Therefore, from now on we will
concentrate on values of $k$ within this rank.
\end{remark}

\begin{proposition}
The functional
$$
J[u]= \! \! \int \! \left[ \frac{1}{2} \left| \Delta ^{\lfloor \alpha/2 \rfloor}
\nabla^{2\left(\alpha/2-\lfloor \alpha/2 \rfloor \right)} u \right|^2 \! -\lambda f u + \frac{(-1)^k}{(k+1)k} \sum_{i,j} u_i u_j S_k^{ij}[u]\right] \! dx,
$$
is well defined in $W^{\alpha,2}_0(\Omega)$ for $f \in L^1(\Omega)$ and
$$\alpha= \left\lceil \frac{Nk-N+4k}{2k+2} \right\rceil.$$
Furthermore, its critical points are solutions to boundary value problem~\eqref{weakdirichlet}.
\end{proposition}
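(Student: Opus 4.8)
The plan is to follow the blueprint of lemma~\ref{definition} together with proposition~\ref{eulag}, but to keep the nonlinear term throughout in its ``distributed'' shape $\sum_{i,j} u_i u_j S^{ij}_k[u]$ rather than $u\,S_k[u]$: this is exactly what permits the smaller exponent $\alpha=\lceil (Nk-N+4k)/(2k+2)\rceil$, and, as the preceding remark makes clear, the content is genuinely new only in the range $N/2<k\le N$, to which I would restrict attention.

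\emph{Well-definedness.} The quadratic term is finite by its very nature. For the linear term, note that $\alpha\ge (Nk-N+4k)/(2k+2)=\tfrac{N}{2}+\tfrac{2k-N}{k+1}>\tfrac{N}{2}$ whenever $k>N/2$, so $W^{\alpha,2}_0(\Omega)\hookrightarrow L^\infty(\Omega)$ and $|\int f u|\le\|f\|_1\|u\|_\infty$. For the nonlinear term, the key observation is that $S^{ij}_k[u]$ is a linear combination of minors of order $k-1$ of $D^2u$, hence a polynomial of degree $k-1$ in the second derivatives. Then H\"older's inequality with the exponents already appearing in theorem~\ref{existuni},
\[
\int\bigl|u_i\,u_j\,S^{ij}_k[u]\bigr|\,dx\le\|u_i\|_{\tilde q}\,\|u_j\|_{\tilde q}\,\|S^{ij}_k[u]\|_{\tilde p},\qquad
\tilde p=\frac{N(k+1)}{(N+2)(k-1)},\quad\tilde q=\frac{N(k+1)}{N-k+1},
\]
together with $2/\tilde q+1/\tilde p=1$ and the identity $(k-1)\tilde p=N(k+1)/(N+2)$, reduces matters to the chain $W^{\alpha,2}_0(\Omega)\hookrightarrow W^{2,(k-1)\tilde p}_0(\Omega)\hookrightarrow W^{1,\tilde q}_0(\Omega)$, which holds for precisely the stated $\alpha$ — it is the same chain underlying case (b) of lemma~\ref{definition}. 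Since $\tilde p\ge 1$ for all admissible $k,N$, this term is finite, and the same estimates make $J$ continuous and Gateaux differentiable on $W^{\alpha,2}_0(\Omega)$.

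\emph{Euler--Lagrange equation.} For $v,w\in C^\infty_0(\Omega)$, an integration by parts using the divergence form $S_k[u]=\tfrac1k\sum_{i,j}\partial_{x_i}(u_jS^{ij}_k[u])$ gives $\sum_{i,j}\int u_iu_jS^{ij}_k[u]\,dx=-k\int u\,S_k[u]\,dx$, so on smooth functions $J$ coincides with functional~\eqref{func0}; hence the computation of proposition~\ref{eulag} applies verbatim and
\[
\left.\frac{d}{dt}J[v+tw]\right|_{t=0}=\int\Bigl[-\lambda f+(-1)^\alpha\Delta^\alpha v-(-1)^kS_k[v]\Bigr]w\,dx,
\]
and writing $-(-1)^kS_k[v]=-\tfrac{(-1)^k}{k}\sum_{i,j}\partial_{x_i}(v_jS^{ij}_k[v])$ identifies the critical-point equation for smooth $v$ with~\eqref{weakdirichlet}. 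To pass to $v,w\in W^{\alpha,2}_0(\Omega)$ I would not extend this last formula directly but rather its ``weak'' rearrangement
\[
\langle J'[v],w\rangle=\int\Bigl[\Delta^{\lfloor\alpha/2\rfloor}\nabla^{2(\alpha/2-\lfloor\alpha/2\rfloor)}v\cdot\Delta^{\lfloor\alpha/2\rfloor}\nabla^{2(\alpha/2-\lfloor\alpha/2\rfloor)}w-\lambda fw\Bigr]dx+\frac{(-1)^k}{k}\sum_{i,j}\int v_j\,S^{ij}_k[v]\,w_i\,dx,
\]
each term of which is continuous on $W^{\alpha,2}_0(\Omega)\times W^{\alpha,2}_0(\Omega)$ by the Step-1 estimates; vanishing of this expression for all $w\in W^{\alpha,2}_0(\Omega)$ is precisely the distributional meaning of~\eqref{weakdirichlet}.

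\emph{Expected main obstacle.} The one real subtlety lies in that last step: for $k>N/2$ and this smaller $\alpha$ the quantity $\int S_k[v]\,w$ need not make sense at all, since $S_k[v]$ is a priori integrable only to an exponent that can fall strictly below $1$, so one cannot simply extend the identity of proposition~\ref{eulag} by density. The whole point of the weak problem~\eqref{weakdirichlet} is that after one integration by parts the nonlinearity enters only through $S^{ij}_k[v]$ (degree $k-1$) paired against two first-order derivatives, and this \emph{is} controlled by $W^{\alpha,2}_0(\Omega)$; hence the density argument must be run at the level of these trilinear forms, and the careful verification that they — and therefore $J'$ — are continuous on $W^{\alpha,2}_0(\Omega)$ via the Sobolev embeddings above is where the genuine work lies. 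All the algebra (the integrations by parts, the use of $\sum_i\partial_{x_i}S^{ij}_k=0$ and of the homogeneity of $\sigma_k$) is imported unchanged from proposition~\ref{eulag}.
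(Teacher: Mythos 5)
Your proposal is correct and follows essentially the same route as the paper: the same H\"older inequality with exponents $\tilde p,\tilde q$ and the Sobolev chain $W^{\alpha,2}_0(\Omega)\hookrightarrow W^{2,(k-1)\tilde p}_0(\Omega)\hookrightarrow W^{1,\tilde q}_0(\Omega)$ for well-definedness, followed by a first-variation computation that reduces to that of proposition~\ref{eulag} and a density argument. The only (harmless) difference is that you first identify $J$ with functional~\eqref{func0} on smooth functions via the divergence-form identity and then convert back, whereas the paper differentiates the trilinear term directly; your added care in running the density argument on the trilinear form rather than on $\int S_k[v]\,w$ makes explicit a point the paper leaves implicit.
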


\begin{proof}
{\it Step 1.}

We start proving that $J[u]$ is well defined in $W^{\alpha,2}_0(\Omega)$. This follows from the inequality
\begin{eqnarray}\nonumber
& & \int \! \left[ \frac{1}{2} \left| \Delta ^{\lfloor \alpha/2 \rfloor}
\nabla^{2\left(\alpha/2-\lfloor \alpha/2 \rfloor \right)} u \right|^2 \! -\lambda f u + \frac{(-1)^k}{(k+1)k} \sum_{i,j} u_i u_j S_k^{ij}[u]\right] \! dx \\
\nonumber &\le& |\lambda| \|f\|_1 \|u\|_\infty + \frac{1}{2} \| \Delta^{\lfloor \alpha/2 \rfloor}
\nabla^{2\left(\alpha/2-\lfloor \alpha/2 \rfloor \right)} u \|_2^2 \\ \nonumber
& & + \frac{1}{(k+1)k} \sum_{i,j} \|u_i\|_{\tilde{q}} \|u_j\|_{\tilde{q}} \| S^{ij}_k[u] \|_{\tilde{p}},
\end{eqnarray}
and suitable Sobolev embeddings.

{\it Step 2.}

Let $v,w \in C^\infty_0(\Omega)$, $t \in \mathbb{R}_+$ and consider $J[v + t w]$
which is well defined as a direct corollary of Step $1$.
We find
\begin{eqnarray}\nonumber
& & \left. \frac{d}{dt} J[v + t w] \right|_{t=0}
\\ \nonumber
&=& \int \bigg\{ -\lambda f w + \left[ \Delta^{\lfloor \alpha/2 \rfloor}
\nabla^{2\left(\alpha/2-\lfloor \alpha/2 \rfloor \right)} v \right]
\left[ \Delta^{\lfloor \alpha/2 \rfloor}
\nabla^{2\left(\alpha/2-\lfloor \alpha/2 \rfloor \right)} w \right]
\\ \nonumber & &
+ \frac{(-1)^k}{(k+1)k} \sum_{i,j} w_i u_j S_k^{ij}[u] + \frac{(-1)^k}{(k+1)k} \sum_{i,j} u_i w_j S_k^{ij}[u]
\\ \nonumber & &
+ \frac{(-1)^k}{(k+1)k} \sum_{i,j} u_i u_j \left. \frac{d}{dt} S_k^{ij}[v+ t w] \right|_{t=0} \bigg\} \, dx.
\end{eqnarray}
A calculation akin to that in proposition~\ref{eulag} leads to
\begin{equation*}
\left. \frac{d}{dt} J[v + t w] \right|_{t=0} \! = \!
\int \! \bigg\{ [-\lambda f + (-1)^\alpha \Delta^\alpha v] w + \frac{(-1)^k}{k}\sum_{i,j} v_{x_j} S_k^{ij}[v] \, w_{x_i} \bigg\} \, dx.
\end{equation*}
By a density argument we can take $v,w \in W^{\alpha,2}_0(\Omega)$ to conclude.
\end{proof}

\begin{theorem}\label{existwkh}
There exist a $\lambda_0 > 0$ such that for $0 < |\lambda| < \lambda_0$ and $2 \le k \le N$ problem~\eqref{weakdirichlet}
has at least two solutions.
\end{theorem}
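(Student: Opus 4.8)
The plan is to transfer to~\eqref{weakdirichlet} the same three--step variational argument already used for~\eqref{dirichlet} — one solution as a local minimum of a truncated functional, a second one as a \emph{mountain pass} critical point — but now for the single exponent $\alpha=\left\lceil\frac{Nk-N+4k}{2k+2}\right\rceil$ and for \emph{all} $2\le k\le N$. The first observation is that, integrating by parts in the divergence identity for the $k$--Hessian, $\frac1k\sum_{i,j}\int u_iu_jS^{ij}_k[u]\,dx=-\int u\,S_k[u]\,dx$ for $u\in C_0^\infty(\Omega)$, so on smooth functions the functional $J[u]$ of the previous proposition coincides with functional~\eqref{func0}; by density it inherits the same continuity and Gateaux differentiability, and by that proposition its critical points solve~\eqref{weakdirichlet}. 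In particular every $W_0^{\alpha,2}(\Omega)$ solution of~\eqref{dirichlet} is a solution of~\eqref{weakdirichlet}, so for $2\le k\le N/2$ the statement is already contained in theorems~\ref{existuni} and~\ref{existwc}; I therefore run the full argument, which turns out to work uniformly in $k$.

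\emph{Geometry and the local minimum.} Using the H\"older and Sobolev estimates displayed in Step~1 of the proof of the preceding proposition, one gets, exactly as in proposition~\ref{downbound}, the lower radial bound $J[u]\ge G[u]$ on $W_0^{\alpha,2}(\Omega)$ with the same $G$, so $G$ has a strictly positive maximum once $|\lambda|$ is small. Lemma~\ref{lembound} and corollary~\ref{corunbound} carry over verbatim: the bump $\psi$ there makes $\frac1k\sum_{i,j}\int\psi_i\psi_jS^{ij}_k[\psi]\,dx=-\int\psi\,S_k[\psi]\,dx$ have the sign that forces $J[t\psi]\to-\infty$ as $t\to+\infty$. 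Truncating the cubic term as in lemma~\ref{funch} gives a functional $H[u]$ coinciding with $J[u]$ for $\|\Delta^{\lfloor\alpha/2\rfloor}\nabla^{2(\alpha/2-\lfloor\alpha/2\rfloor)}u\|_2\le R_0$ and bounded below; the second--variation computation of theorem~\ref{existuni} — whose H\"older exponents $\tilde p,\tilde q$ and chain of embeddings $W_0^{\alpha,2}(\Omega)\hookrightarrow W_0^{2,(k-1)\tilde p}(\Omega)\hookrightarrow W_0^{1,\tilde q}(\Omega)$ hold precisely because $\alpha\ge\frac{Nk-N+4k}{2k+2}$ — shows $H[u]$ is strictly convex in that ball for $|\lambda|$ small. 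Hence $H$, and therefore $J$, has there an isolated local minimizer $u_m$ with $J[u_m]<0$.

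\emph{The mountain pass solution.} Ekeland's variational principle produces a Palais--Smale sequence $\{u_n\}$ at the minimax level $K>0$, and pairing $J'[u_n]$ with $\frac1{k+1}u_n$ and invoking H\"older and Sobolev exactly as in theorem~\ref{existwc} shows $\{u_n\}$ is bounded in $W_0^{\alpha,2}(\Omega)$ for $|\lambda|$ small. Passing to a subsequence $u_n\rightharpoonup u_\star$ weakly in $W_0^{\alpha,2}(\Omega)$, the linear and quadratic parts of $\langle J'[u_n],w\rangle$ converge for every $w\in C_0^\infty(\Omega)$, and it remains to treat the nonlinear part $\frac{(-1)^k}{k}\sum_{i,j}\int(u_n)_{x_j}S^{ij}_k[u_n]\,w_{x_i}\,dx$. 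Here $(u_n)_{x_j}\to(u_\star)_{x_j}$ strongly in $L^{\tilde q/2}(\Omega)$ by the Rellich--Kondrachov theorem, $S^{ij}_k[u_n]\rightharpoonup S^{ij}_k[u_\star]$ weakly in $L^{\tilde p}(\Omega)$ by lemma~\ref{leminor} (each $S^{ij}_k$ being a linear combination of minors of order $k-1$), $w_{x_i}\in L^\infty(\Omega)$, and $\frac{2}{\tilde q}+\frac{1}{\tilde p}=1$; the product of a strongly and a weakly convergent sequence then yields the convergence, in complete analogy with proposition~\ref{weakcont}. Thus $\langle J'[u_\star],w\rangle=0$ first for $w\in C_0^\infty(\Omega)$ and then, $J$ being $C^1$, for all $w\in W_0^{\alpha,2}(\Omega)$, i.e.\ $u_\star$ solves~\eqref{weakdirichlet}, and $u_\star\neq u_m$ because the mountain pass geometry is independent of the local minimum as in theorem~\ref{existwc}. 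Taking $\lambda_0$ to be the smallest of the finitely many thresholds on $|\lambda|$ used above assembles the two solutions; the case $\lambda=0$ is covered by the reformulation in remark~\ref{lambda0}.

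The step I expect to be the main obstacle is purely bookkeeping but genuinely important: checking that with the comparatively small $\alpha=\left\lceil\frac{Nk-N+4k}{2k+2}\right\rceil$ the three H\"older exponents attached to $(u_n)_{x_j}$, $S^{ij}_k[u_n]$ and $w_{x_i}$ (for $w\in C_0^\infty$) still close up and that the \emph{compact} Sobolev embedding needed for the strong convergence of $(u_n)_{x_j}$ is truly available, i.e.\ that there is an actual gain of integrability rather than a borderline embedding. This is exactly the slack that the divergence (weak) formulation buys over~\eqref{dirichlet}, and it is what makes the uniform value of $\alpha$ possible.
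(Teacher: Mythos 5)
Your proposal is correct and follows essentially the same route as the paper: the mountain-pass geometry and truncated functional from Section~\ref{variational} give the local minimum, Ekeland's principle plus the standard pairing give a bounded Palais--Smale sequence, and the key step is exactly the paper's — weak convergence of $(u_n)_{x_j}S^{ij}_k[u_n]$ obtained by combining Rellich--Kondrachov (strong convergence of first derivatives) with lemma~\ref{leminor} (weak convergence of the minors $S^{ij}_k[u_n]$) and the product of strongly and weakly convergent sequences. The only cosmetic difference is your choice of H\"older exponents ($L^{\tilde q/2}$ and $L^{\tilde p}$ versus the paper's $L^k$ and $L^{k/(k-1)}$ followed by an upgrade to $L^{\tilde p\tilde q/(\tilde p+\tilde q)}$), which does not change the argument.
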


\begin{proof}
The proof will be carried out by means of variational methods just like in the previous cases.

{\it Step 1.}

Our functional obeys the inequality
\begin{eqnarray}\nonumber
& & \int \! \left[ \frac{1}{2} \left| \Delta ^{\lfloor \alpha/2 \rfloor}
\nabla^{2\left(\alpha/2-\lfloor \alpha/2 \rfloor \right)} u \right|^2 \! -\lambda f u + \frac{(-1)^k}{(k+1)k} \sum_{i,j} u_i u_j S_k^{ij}[u]\right] \! dx \\
\nonumber &\le& -|\lambda| \|f\|_1 \|u\|_\infty + \frac{1}{2} \| \Delta ^{\lfloor \alpha/2 \rfloor}
\nabla^{2\left(\alpha/2-\lfloor \alpha/2 \rfloor \right)} u \|_2^2 \\ \nonumber
& & - \frac{1}{(k+1)k} \sum_{i,j} \|u_i\|_{\tilde{q}} \|u_j\|_{\tilde{q}} \| S^{ij}_k[u] \|_{\tilde{p}}.
\end{eqnarray}
Sobolev embeddings and the same reasoning as in proposition~\ref{downbound}, lemma~\ref{lembound} and corollary~\ref{corunbound} leads to conclude
that functional $J[u]$ fulfills the necessary geometric requirements.

{\it Step 2.}

As in the case of theorem~\ref{existmul}, the existence of a Palais-Smale sequence to level $K$, that is a sequence
$\{u_n\}_{n\in \mathbb{N}}\subset W^{\alpha,2}_0(\Omega)$ that fulfills
\begin{itemize}
\item $J[u_n] \to K$ when $n \to \infty$,
\item $J'[u_n] \to 0$ in $W^{-\alpha,2}(\Omega)$,
\end{itemize}
follows from Ekeland's variational principle~\cite{ekeland2}. As in proposition~\ref{palais}, we consider the weak form of our equation
$$
(-1)^\alpha \Delta^\alpha u_n = \frac{(-1)^k}{k}\sum_{i,j} \eth_{x_i}\{(u_n)_{x_j} S_k^{ij}[u_n]\} +
\lambda f + y_n,
$$
and $y_n \xrightarrow[n \to \infty]{} 0 \quad \mathrm{in} \quad W^{-\alpha,2}(\Omega)$.

{\it Step 3.}

Now we focus on proving the corresponding result related to weak convergence in this case.
Since $S_k^{ij}[u_n]$ is a linear combination of minors of $D^2 u$, then we have as a direct corollary of lemma~\ref{leminor} that
$$
S_k^{ij}[u_n] \rightharpoonup S_k^{ij}[u] \quad \text{weakly in} \quad L^{k/(k-1)}(\Omega).
$$
Furthermore we know $W^{2,k}(\Omega) \Subset W^{1,k}(\Omega)$, so the Rellich-Kondrachov theorem assures $(u_n)_{x_j} \to u_{x_j}$ strongly
in $L^k(\Omega)$. Now by product of weakly and strongly convergent sequences we have
$$
(u_n)_{x_j} S_k^{ij}[u_n] \rightharpoonup u_{x_j} S_k^{ij}[u] \quad \text{weakly in} \quad L^{1}(\Omega).
$$
Moreover, since $(u_n)_{x_j} S_k^{ij}[u_n]$ is bounded in $L^{\tilde{p} \tilde{q}/(\tilde{p} + \tilde{q})}(\Omega)$, then
$$
(u_n)_{x_j} S_k^{ij}[u_n] \rightharpoonup u_{x_j} S_k^{ij}[u] \quad \text{weakly in} \quad L^{\tilde{p} \tilde{q}/(\tilde{p} + \tilde{q})}(\Omega).
$$

{\it Step 4.}

The next step is proving that all Palais-Smale sequences to level
$K$ are uniformly bounded in $W_0^{\alpha,2}(\Omega)$.
A combination of integration by parts, H\"older, Sobolev and duality inequalities yields
\begin{eqnarray} \nonumber
K + o(1) &=& J[u_n] -\frac{1}{k+1} \langle J'(u_n), u_n \rangle + \frac{1}{k+1} \langle y_n, u_n \rangle
\\ \nonumber
&\ge& \left( \frac{1}{2}-\frac{1}{k+1} \right) \int \left| \Delta ^{\lfloor \alpha/2 \rfloor}
\nabla^{2\left(\alpha/2-\lfloor \alpha/2 \rfloor \right)} u \right|^2 dx -|\lambda| C'
\\ \nonumber
& & \times \left(1-\frac{1}{k+1} \right) \|f\|_{1}
\left( \int \left| \Delta ^{\lfloor \alpha/2 \rfloor}
\nabla^{2\left(\alpha/2-\lfloor \alpha/2 \rfloor \right)} u \right|^2 dx \right)^{1/2}
\\ \nonumber
& & -\frac{1}{k+1} C'' \|y_n\|_{W^{-\alpha,2}(\Omega)} \! \left( \int \left| \Delta ^{\lfloor \alpha/2 \rfloor}
\nabla^{2\left(\alpha/2-\lfloor \alpha/2 \rfloor \right)} u \right|^2 dx \right)^{1/2} \! ,
\end{eqnarray}
for suitable positive constants $C'$ and $C''$.
For small enough $|\lambda|$ this proves the sequence is bounded.

{\it Step 5.}

Since we proved in step $3$ the relevant weak continuity property for the current equation
we have
\begin{itemize}
\item[$\boldsymbol{\ast}$] $J'[u_\star] = \lim_{n \to \infty} J'[u_n] = 0$.
\end{itemize}
The last equality is equivalent to
$$
(-1)^\alpha \Delta^\alpha u_\star = \frac{(-1)^k}{k}\sum_{i,j} \eth_{x_i}\left\{(u_\star)_{x_j} S_k^{ij}[u_\star]\right\} + \lambda f,
$$
or, using the notation of theorem~\ref{mainw}, to
$$
(-1)^\alpha \Delta^\alpha u_\star = (-1)^k \mathcal{S}_k[u_\star] + \lambda f,
$$
for $u_\star \in W^{\alpha,2}_0(\Omega)$.
Note that $u_\star$ is necessarily different from $u_m$, since the same reasoning of the previous
subsection applies here as well.
In consequence $u_\star$ is a second solution to problem~\eqref{weakdirichlet}.
\end{proof}

\section{Further Results}
\label{further}

\subsection{Summable data}

In this section we consider the case in which the data $f \in L^1(\Omega)$ independently of the values of $N$ and $k$.
We will look for the optimal $\alpha$ that allows us to define our functional and build the existence theory in this case.

\begin{proposition}\label{propl1}
Let $2 \le k \le N$ and
\begin{equation}\label{newalpha}
\alpha= \left\{ \begin{array}{c}
\lceil (N+1)/2 \rceil \qquad \mathrm{if} \quad k \le \lfloor 2N/3 \rfloor \\
\lceil (N+2)/2 \rceil \qquad \mathrm{if} \quad k > \lfloor 2N/3 \rfloor
\end{array} \right. .
\end{equation}
Then functional~\eqref{func0} is well defined and its critical points correspond to solutions
to boundary value problem~\eqref{dirichlet}.
\end{proposition}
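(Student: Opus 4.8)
The plan is to split the range $2\le k\le N$ at $k=N/2$ and to reduce most of it to results already established. If $k=N/2$ (which forces $N$ even), then~\eqref{newalpha} reads $\alpha=\lceil(N+1)/2\rceil=N/2+1$, which is exactly the exponent of case (c) of lemma~\ref{definition}, so~\eqref{func0} is already known to be well defined there. If $N/2<k\le N$, a short check with the floor/ceiling identities shows that~\eqref{newalpha} coincides there with $\lceil 2+\frac{k-2}{2k}N\rceil$ (for $N$ even both equal $N/2+1$; for $N$ odd they equal $(N+1)/2$ when $k\le\lfloor 2N/3\rfloor$ and $(N+3)/2$ when $k>\lfloor 2N/3\rfloor$), so well-definedness follows from case (a) of lemma~\ref{definition} and the Euler--Lagrange property from proposition~\ref{eulag}. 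Hence the genuinely new range is $2\le k<N/2$, where~\eqref{newalpha} prescribes $\alpha=\lceil(N+1)/2\rceil$ --- strictly larger than the optimal exponent of case (b) of lemma~\ref{definition} --- and this surplus of regularity is what makes the bare hypothesis $f\in L^1(\Omega)$ sufficient.

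For this remaining range I would first prove that $J[u]$ is well defined on $W^{\alpha,2}_0(\Omega)$. Since $\alpha=\lceil(N+1)/2\rceil>N/2$, one has $W^{\alpha,2}_0(\Omega)\hookrightarrow L^\infty(\Omega)$ (indeed into a H\"older space). Moreover, because $k<N/2$ forces $N/k>2$, the scaling condition $\alpha-2\ge N/2-N/k$ underlying $W^{\alpha,2}_0(\Omega)\hookrightarrow W^{2,k}_0(\Omega)$ holds with room to spare (it already follows from $\alpha-2\ge N/2-2$), so $D^2u\in L^k(\Omega)$ and therefore $S_k[u]$, being a sum of products of $k$ entries of $D^2u$, lies in $L^1(\Omega)$ by the generalized H\"older inequality. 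Then H\"older's inequality gives $\int|u\,S_k[u]|\,dx\le\|u\|_\infty\|S_k[u]\|_1$ and $\int|u\,f|\,dx\le\|u\|_\infty\|f\|_1$, while the quadratic term of~\eqref{func0} is exactly the square of the $W^{\alpha,2}_0$ seminorm; hence $J[u]$ is finite on $W^{\alpha,2}_0(\Omega)$.

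Next I would show that the critical points of~\eqref{func0} are weak solutions of~\eqref{dirichlet}; this is a verbatim repetition of the computation in the proof of proposition~\ref{eulag}. For $v,w\in C^\infty_0(\Omega)$ one differentiates $J[v+tw]$ at $t=0$; integrating by parts in the quadratic term produces the contribution $(-1)^\alpha\Delta^\alpha v\,w$, and the divergence form $S_k[v]=\frac{1}{k}\sum_{i,j}\partial_{x_i}(v_{x_j}S^{ij}_k[v])$ together with $S^{ij}_k(D^2v)=\left.\partial_{a_{ij}}\sigma_k[\Lambda(A)]\right|_{A=D^2v}$ makes the $k+1$ terms arising from the cubic part collapse into $(-1)^k S_k[v]\,w$, so that
\[
\left.\frac{d}{dt}J[v+tw]\right|_{t=0}=\int\left\{-\lambda f+(-1)^\alpha\Delta^\alpha v-(-1)^k S_k[v]\right\}w\,dx.
\]
A density argument extends this identity to $v,w\in W^{\alpha,2}_0(\Omega)$, so that $J'[u]=0$ is precisely the weak formulation of~\eqref{dirichlet}. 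Nothing in this computation uses the specific value of $\alpha$ beyond the well-definedness just established.

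The only delicate part is the elementary arithmetic packaged into the first paragraph: for $k\le N/2$ the binding requirement is $\alpha>N/2$ (needed for the embedding into $L^\infty$), whereas for $k>N/2$ it is $\alpha\ge 2+\frac{N(k-2)}{2k}$ (needed for the embedding into $W^{2,k}_0$), and one must verify that the single piecewise formula~\eqref{newalpha} meets both, with the floor/ceiling corrections taking the correct shape for each parity of $N$. This is exactly the sense in which the stated exponent is optimal under the sole assumption $f\in L^1(\Omega)$; everything else is a direct transcription of results and computations already in the paper.
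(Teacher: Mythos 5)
Your proposal is correct and follows essentially the same route as the paper: the paper's own proof simply observes that the exponents in~\eqref{newalpha} dominate those of lemma~\ref{definition} and that $W^{\alpha,2}_0(\Omega)\hookrightarrow L^\infty(\Omega)$ (which is exactly what upgrades the hypothesis on $f$ from $L^{p^*}$ to $L^1$ in the range $2\le k<N/2$), and then reproduces the argument of proposition~\ref{eulag} verbatim. Your version just spells out the case split and the arithmetic that the paper leaves as ``easy to check.''
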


\begin{proof}
It is easy to check that the values of $\alpha$ in~\eqref{newalpha} are greater than or equal to the values of $\alpha$ in lemma~\ref{definition};
this together with the embedding $W^{\alpha,2}_0(\Omega) \hookrightarrow L^\infty(\Omega)$
guarantee that the functional is well defined. In the light of this the argument in proposition~\ref{eulag} can be exactly reproduced in
the present case.
\end{proof}

\begin{remark}
Note that the values of $\alpha$ in~\eqref{newalpha} equal those present in lemma~\ref{definition} for $k \ge N/2$.
\end{remark}

\begin{theorem}\label{unisumm}
Under the hypotheses of proposition~\ref{propl1} there exist a $\lambda_0 > 0$ such that for $0 \le |\lambda| < \lambda_0$ and $2 \le k \le N$ problem~\eqref{dirichlet}
has at least two solutions. Moreover, these solutions differ in $W^{\alpha,2}_0(\Omega)$ norm and one of them is unique in the ball
$$
\mathcal{B}=\left\{u \in W^{\alpha,2}_0(\Omega) : 0 \le \|u\|_{W^{\alpha,2}_0(\Omega)} \le \tau \right\},
$$
for some $\tau>0$.
\end{theorem}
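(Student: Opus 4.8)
The plan is to reproduce, with the value of $\alpha$ furnished by Proposition~\ref{propl1}, the entire chain of arguments that established Theorem~\ref{main} in the range $N/2 < k \le N$. First I would record that by Proposition~\ref{propl1} the functional~\eqref{func0} is well defined on $W^{\alpha,2}_0(\Omega)$ and that its critical points are weak solutions of~\eqref{dirichlet}. Since the $\alpha$ of~\eqref{newalpha} is at least $\lceil (N+1)/2 \rceil$, the embedding $W^{\alpha,2}_0(\Omega) \hookrightarrow L^\infty(\Omega)$ — and in fact the embeddings into the H\"older spaces $C^{0,\gamma}(\bar{\Omega})$ or $C^{1,1/2}(\bar{\Omega})$ exploited in Proposition~\ref{palais} — hold for every pair $(k,N)$ with $2 \le k \le N$. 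This is the structural reason why the compactness argument of section~\ref{existmulsec}, originally restricted to $N/2 < k \le N$, now applies uniformly in $k$.

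Second, I would observe that the geometry of $J[u]$ is unchanged. As noted in the proof of Proposition~\ref{propl1}, the exponent~\eqref{newalpha} is no smaller than the one of Lemma~\ref{definition} in every case, so the H\"older and Sobolev inequalities underlying Proposition~\ref{downbound}, Lemma~\ref{lembound} and Corollary~\ref{corunbound} carry over verbatim: $J$ admits the radial lower bound $G[u]$, is negative on small multiples $t\varphi$, and tends to $-\infty$ along $t\psi$, i.e.\ it has the \emph{mountain pass} geometry. Likewise the Palais--Smale compactness of Proposition~\ref{palais}, whose proof used only weak precompactness in the Hilbert space $W^{\alpha,2}_0(\Omega)$ together with the Sobolev-then-Arzel\`a--Ascoli passage to uniform convergence, holds without modification.

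Third, with these ingredients in place I would run the two-step argument of Theorem~\ref{existmul}. Choosing $\lambda_0 > 0$ so that $g(R) = G[u]$ attains a positive maximum for $0 < |\lambda| < \lambda_0$, the truncated functional $H[u]$ of Lemma~\ref{funch} is bounded below, its infimum $m$ is negative, and the local Palais--Smale condition yields a local minimizer $u_m$ of $J$ with $J[u_m] = m < 0$; then, after the standard verification that Palais--Smale sequences at the minimax level $K = \inf_{\gamma}\max_t J[\gamma(t)] > 0$ are bounded for $|\lambda|$ small, Proposition~\ref{palais} produces a second critical point $u_\star$ with $J[u_\star] = K > 0$. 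Since $J[u_m] < 0 < J[u_\star]$ the two solutions are distinct and a fortiori differ in $W^{\alpha,2}_0(\Omega)$ norm. The local uniqueness in the ball $\mathcal{B}$ follows from the second-variation computation of Theorem~\ref{existuni}: for $\|u\|_{W^{\alpha,2}_0(\Omega)}$ below a threshold $R_0$, which one forces by further shrinking $\lambda_0$, the H\"older inequality with exponents $\tilde{p},\tilde{q}$ and the embeddings $W^{\alpha,2}_0(\Omega) \hookrightarrow W^{2,(k-1)\tilde{p}}_0(\Omega) \hookrightarrow W^{1,\tilde{q}}_0(\Omega)$ — again valid because $\alpha$ is no smaller than in Lemma~\ref{definition} — make the second variation of $H$ strictly positive, hence $H$ strictly convex in that region, hence $u_m$ unique in $\mathcal{B}$ once $\tau$ is matched to $R_0$ modulo the equivalence of norms. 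The endpoint case $\lambda = 0$ is handled exactly as in Remark~\ref{lambda0}.

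The only point that requires genuine care is the bookkeeping of Sobolev embeddings: one must check that the single exponent~\eqref{newalpha} simultaneously supports (a) the embedding into $L^\infty$ (or into a H\"older space) needed for the mountain pass geometry and the Palais--Smale condition, and (b) the finer chain $W^{\alpha,2}_0(\Omega) \hookrightarrow W^{2,(k-1)\tilde{p}}_0(\Omega) \hookrightarrow W^{1,\tilde{q}}_0(\Omega)$ needed for strict convexity, and this for all $(k,N)$ with $2 \le k \le N$. Both requirements follow from the monotonicity of Sobolev spaces in the differentiability index together with the inequality, recorded in the proof of Proposition~\ref{propl1}, that the exponent~\eqref{newalpha} is at least the exponent of Lemma~\ref{definition}; no new estimate is needed, and everything else is a transcription of sections~\ref{geometryj}, \ref{existmulsec} and~\ref{existunisec}.
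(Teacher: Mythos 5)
Your proposal is correct and follows essentially the same route as the paper, whose proof of this theorem consists precisely of the remark that the argument of section~\ref{variational} for the case $N/2 < k \le N$ carries over verbatim once $\alpha$ is large enough. Your expansion correctly identifies the one point that makes this work — the exponent~\eqref{newalpha} dominates the one of Lemma~\ref{definition}, so the $L^\infty$/H\"older embeddings, the mountain pass geometry, the Arzel\`a--Ascoli-based Palais--Smale condition and the second-variation convexity all apply uniformly in $2 \le k \le N$.
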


\begin{proof}
The proof mimics exactly that of section~\ref{variational} for the case $N/2 < k \le N$.
\end{proof}

\begin{remark}
Just like in all previous cases, the solution $u_m$ corresponding to the local minimum of the functional is the one we know is isolated.
This solution is related to the mountain pass $u_\star$ one in the following way:
$$\| \Delta ^{\lfloor \alpha/2 \rfloor}\nabla^{2\left(\alpha/2-\lfloor \alpha/2 \rfloor \right)} u_m \|_2 <
\| \Delta ^{\lfloor \alpha/2 \rfloor}\nabla^{2\left(\alpha/2-\lfloor \alpha/2 \rfloor \right)} u_\star \|_2.$$
\end{remark}

\begin{remark}
The requirement $\alpha \ge \lceil (N+1)/2 \rceil$ is optimal in order to prove the Palais-Smale condition
via the Arzel\`a-Ascoli theorem and the boundedness of $u$ in $L^\infty(\Omega)$.
Therefore one could say hypothesis~\eqref{newalpha} is sharp in this respect.
\end{remark}

\subsection{Weak$-*$ continuity of $S_k[u]$ in the range $N/2 \le k \le N$}

Although not needed for the proof of the Palais-Smale condition in the range $N/2 < k \le N$,
we prove here a result that shows the weak continuity of $S_k[u]$ for all $N/2 \le k \le N$.

\begin{lemma}\label{hardyz2}
For any $u \in W^{2,k}_0(\Omega)$, $N/2 \le k \le N$, we have that $S_{k}[u] \in h_z^1(\bar{\Omega})$.
\end{lemma}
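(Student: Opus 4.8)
The argument is a straightforward adaptation of the proof of Lemma~\ref{hardyz}: the only point to check is that the divergence structure of the $k$-Hessian supplies a pair of conjugate Lebesgue exponents for \emph{every} $k$ with $N/2 \le k \le N$, and not merely for $k=N/2$.

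First I would extend $u\in W^{2,k}_0(\Omega)$ by zero to a function $\tilde u\in W^{2,k}(\mathbb{R}^N)$, which is legitimate since $W^{2,k}_0(\Omega)$ is the closure of $C_0^\infty(\Omega)$ in $W^{2,k}(\Omega)$, and $\tilde u$ is compactly supported in $\bar\Omega$. Every entry of $D^2\tilde u$ lies in $L^k(\mathbb{R}^N)$, so any minor of order $k$ of $D^2\tilde u$ --- a sum of products of $k$ such entries --- lies in $L^1(\mathbb{R}^N)$ by H\"older's inequality; in particular $S_k[\tilde u]\in L^1(\mathbb{R}^N)$. Working first with $u\in C^\infty$ and arguing by approximation afterwards, I would then use the divergence form $S_k[v]=\frac1k\sum_{i,j}\partial_{x_i}(v_{x_j}S^{ij}_k[v])$ together with the fact that the Newton tensor is divergence free, $\sum_i\partial_{x_i}S^{ij}_k[v]=0$ (both recorded in~\cite{wang}), to exhibit, for each fixed $j$, the quantity $\sum_i S^{ij}_k[\tilde u]\,\partial_{x_i}\tilde u_{x_j}$ as the pointwise product of the divergence-free vector field $(S^{ij}_k[\tilde u])_{i=1}^{N}$, a linear combination of minors of order $k-1$ of $D^2\tilde u$ and hence an element of $L^{k/(k-1)}(\mathbb{R}^N)$, with the gradient $\nabla\tilde u_{x_j}$ of the function $\tilde u_{x_j}\in W^{1,k}(\mathbb{R}^N)$. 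Since $\frac{k-1}{k}+\frac1k=1$, the hypotheses of the div-curl lemma of Coifman-Lions-Meyer-Semmes are met, and the results of~\cite{grafakos1} (see also~\cite{grafakos,coifman}) yield that each such product --- and therefore $S_k[\tilde u]=\frac1k\sum_{i,j}S^{ij}_k[\tilde u]\,\tilde u_{x_ix_j}$ --- belongs to $\mathcal{H}^1(\mathbb{R}^N)$, with an a priori estimate of the type $\|S_k[\tilde u]\|_{\mathcal{H}^1(\mathbb{R}^N)}\le C\|D^2\tilde u\|^k_{L^k(\mathbb{R}^N)}$. The extension to general $u\in W^{2,k}_0(\Omega)$ is obtained by mollifying $\tilde u$ and exploiting the multilinear structure of $S_k$: writing the differences $S_k[\tilde u_\varepsilon]-S_k[\tilde u_{\varepsilon'}]$ as telescoping multilinear expressions and applying the same div-curl estimate to each summand shows that $\{S_k[\tilde u_\varepsilon]\}$ is Cauchy in $\mathcal{H}^1(\mathbb{R}^N)$, its limit being $S_k[\tilde u]$ (the $L^1$ convergence identifying the limit).

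Finally, Definitions~\ref{hardyspace} and~\ref{localhardyspace} give the inclusion $\mathcal{H}^1(\mathbb{R}^N)\subset h^1(\mathbb{R}^N)$, and since $S_k[\tilde u]$ is supported in $\bar\Omega$ the conclusion $S_k[u]\in h^1_z(\bar\Omega)$ follows. The only genuinely delicate step is the approximation argument of the preceding paragraph: because $S_k$ is nonlinear, passing from smooth functions to $W^{2,k}_0(\Omega)$ requires combining the multilinearity of the $k$-Hessian with the div-curl estimate term by term; everything else simply repeats the proof of Lemma~\ref{hardyz} with the conjugate exponents $k/(k-1)$ and $k$ in place of those used there.
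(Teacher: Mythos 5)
Your argument is correct and follows essentially the same route as the paper, which simply states that the proof of Lemma~\ref{hardyz2} ``follows identically the one of lemma~\ref{hardyz}'': zero extension, the divergence form of $S_k$, the results of~\cite{grafakos1,grafakos,coifman} to land in $\mathcal{H}^1(\mathbb{R}^N)$, the inclusion $\mathcal{H}^1(\mathbb{R}^N)\subset h^1(\mathbb{R}^N)$, and the compact support of $S_k[\tilde u]$ in $\bar\Omega$. You in fact supply more detail than the paper does --- the explicit conjugate exponents $k/(k-1)$ and $k$, the div--curl structure via the divergence-free Newton tensor, and the telescoping approximation argument --- all of which is consistent with, and a useful elaboration of, the intended proof.
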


\begin{proof}
The proof follows identically the one of lemma~\ref{hardyz}.
\end{proof}

\begin{theorem}\label{weakcont2}
Let $N/2 \le k \le N$,
then $S_k[u]$ is weakly continuous in $h^1_r(\Omega)$ and in $\mathcal{M}(\Omega)$. That is, if
$$
u_n \rightharpoonup u \qquad \text{weakly in} \quad W_0^{2,k}(\Omega),
$$
then
$$
S_k[u_n] \overset{*}{\rightharpoonup} S[u] \qquad \text{weakly$-*$ in} \quad h^1_r(\Omega),
$$
and
$$
S_k[u_n] \overset{*}{\rightharpoonup} S[u] \qquad \text{weakly$-*$ in} \quad \mathcal{M}(\Omega).
$$
\end{theorem}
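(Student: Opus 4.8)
The plan is to follow the scheme of Proposition~\ref{weakcontast}, replacing Lemma~\ref{hardyz} by Lemma~\ref{hardyz2} throughout. First, by Lemma~\ref{hardyz2} together with the inclusion $h^1_z(\bar{\Omega})\subset h^1_r(\Omega)$ from~\cite{der}, both $S_k[u_n]$ and $S_k[u]$ belong to $h^1_r(\Omega)$ whenever $u_n,u\in W^{2,k}_0(\Omega)$, so the statement makes sense. Next, the weak-$*$ convergence in $\mathcal{M}(\Omega)$ is immediate: since $S_k[u_n]$ is a fixed linear combination of minors of order $k$ of $D^2 u_n$ and $u_n\rightharpoonup u$ weakly in $W^{2,k}_0(\Omega)$, hence also weakly in $W^{2,k}(\Omega)$, Lemma~\ref{leminor} gives $S_k[u_n]\overset{*}{\rightharpoonup}S_k[u]$ in $\mathcal{M}(\Omega)$.

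For the weak-$*$ convergence in $h^1_r(\Omega)$ I would use the duality $[\mathrm{vmo}_z(\bar{\Omega})]^*=h^1_r(\Omega)$ from~\cite{der}, so that it suffices to show $\int w\,S_k[u_n]\,dx\to\int w\,S_k[u]\,dx$ for every $w\in\mathrm{vmo}_z(\bar{\Omega})$. Given $\epsilon>0$, use the density of $C_0(\Omega)$ in $\mathrm{vmo}_z(\bar{\Omega})$ to pick $w_\epsilon\in C_0(\Omega)$ with $\|w-w_\epsilon\|_{\mathrm{vmo}_z(\bar{\Omega})}\le\epsilon$, and split the difference $\int w\,S_k[u_n]\,dx-\int w\,S_k[u]\,dx$ as $\big(\int w_\epsilon\,S_k[u_n]\,dx-\int w_\epsilon\,S_k[u]\,dx\big)+\int(w-w_\epsilon)\,S_k[u_n]\,dx-\int(w-w_\epsilon)\,S_k[u]\,dx$. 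The last two terms are bounded by $\big(\|S_k[u_n]\|_{h^1_r(\Omega)}+\|S_k[u]\|_{h^1_r(\Omega)}\big)\,\epsilon$ by duality, while the first parenthesis tends to $0$ as $n\to\infty$ because $w_\epsilon\in C_0(\Omega)$ is an admissible test function against the weak-$*$ convergence in $\mathcal{M}(\Omega)$ established above. Hence $\limsup_{n\to\infty}\big|\int w\,S_k[u_n]\,dx-\int w\,S_k[u]\,dx\big|\le C\epsilon$, and letting $\epsilon\downarrow 0$ concludes.

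The one point that needs care — and the main obstacle — is the uniform bound $\sup_n\|S_k[u_n]\|_{h^1_r(\Omega)}<\infty$. Since $u_n\rightharpoonup u$ weakly in $W^{2,k}_0(\Omega)$ the sequence is bounded there, and one must check that the compensated-compactness/Hardy-space estimate underlying Lemma~\ref{hardyz2} (the bound $\|S_k[\tilde{u}_n]\|_{\mathcal{H}^1(\mathbb{R}^N)}\le C\|\tilde{u}_n\|_{W^{2,k}(\mathbb{R}^N)}^{k}$ coming from~\cite{grafakos1,grafakos,coifman}) is quantitative and uniform in $n$; this is the case since that estimate is controlled by the $W^{2,k}$ norm and the zero-extension operator $u\mapsto\tilde{u}$ is bounded from $W^{2,k}_0(\Omega)$ into $W^{2,k}(\mathbb{R}^N)$. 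Everything else is a verbatim repetition of the argument in Proposition~\ref{weakcontast} with $N/2$ replaced by a general $k$ in the admissible range; note that the hypothesis $N/2\le k\le N$ is exactly what guarantees, through Lemma~\ref{hardyz2}, that $S_k[u]$ lands in $h^1_z(\bar{\Omega})$ and not merely in $L^1(\Omega)$.
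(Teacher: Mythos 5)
Your proposal is correct and follows essentially the same route as the paper, which simply states that the $h^1_r(\Omega)$ part ``follows identically'' from the argument of Proposition~\ref{weakcontast} with Lemma~\ref{hardyz} replaced by Lemma~\ref{hardyz2}; you have spelled that argument out faithfully, and your explicit flagging of the uniform bound $\sup_n\|S_k[u_n]\|_{h^1_r(\Omega)}<\infty$ addresses a point the paper leaves implicit. The only (harmless) difference is the order of the two conclusions: you obtain the weak$-*$ convergence in $\mathcal{M}(\Omega)$ directly from Lemma~\ref{leminor} and feed it into the density argument, whereas the paper deduces it afterwards from the $h^1_r(\Omega)$ convergence via the inclusion $C_0(\Omega)\subset\mathrm{vmo}_z(\bar{\Omega})$.
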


\begin{proof}
With respect to weak continuity in $h^1_r(\Omega)$, the case $k=N/2$ was already proven in theorem~\ref{weakcontast}, and the proof for
the rest of cases follows identically by using the result of lemma~\ref{hardyz2}. Weak continuity in $\mathcal{M}(\Omega)$ follows in all cases from
$C_0(\Omega) \subset \mathrm{vmo}_z(\bar{\Omega})$.
\end{proof}

\subsection{Several $k-$Hessians and the Laplacian}

Note that equations~\eqref{rkhessian} are always posed for $\alpha \ge 2$ within this work. This is a necessary condition in order to have a semilinear problem;
the option $\alpha =1$ always leads to a fully nonlinear problem. In fact, there is a way of connecting the different $k-$Hessians with linear
combinations of $k-$Hessians as well as the Laplacian, which is nothing but the $1-$Hessian, making even more explicit the fully nonlinear character
of any equation that contains this type of nonlinearity and that is harmonic rather than polyharmonic.

Let us make this fact more explicit. As already noted in~\cite{n5}\footnote{This was actually a personal advise of Neil Trudinger.},
in $N=2$ the following equality holds
$$
S_2\left[ u+\frac{x_1^2 + x_2^2}{2} \right]= S_2[u] + \Delta u + 1.
$$
This can be generalized for arbitrary $k$ and $N$ in the following way\footnote{Note that this equality is related to the following formula
for the characteristic polynomial of a square matrix $A$ of order $N$
$$
\det(A-\mu I) = \sum_{i=0}^N \mathrm{Tr}_i(A)(-\mu)^{N-i},
$$
where $\mathrm{Tr}_i(A)$, $1 \le i \le N$, is the $i^{\mathrm{th}}$ elementary symmetric polynomial of eigenvalues of matrix $A$ and $\mathrm{Tr}_0(A):=1$.
This classical result can be generalized in the following way
$$
\mathrm{Tr}_k(A-\mu I)= \sum_{i=0}^k \left(\!\!\! \begin{array}{c} N-i \\ k-i \end{array} \!\!\!\right) \mathrm{Tr}_i(A)(-\mu)^{k-i},
$$
which is the linear algebraic counterpart of equation~\eqref{lckhessians} in the main text.
}
\begin{equation}\label{lckhessians}
S_k\left[ u+\frac{x_1^2+ \cdots + x_N^2}{2} \right]= \sum_{i=0}^k \left(\!\!\! \begin{array}{c} N-i \\ k-i \end{array} \!\!\!\right) S_i[u],
\end{equation}
where $\left(\!\!\! \begin{array}{c} N-i \\ k-i \end{array} \!\!\!\right)= \frac{(N-i)!}{(N-k)!(k-i)!}$, $S_1[u]= \Delta u$ and $S_0[u]:=1$.
Despite the potential interest of this change of variables in order to reduce equations, either fully nonlinear or semilinear, with several $k-$Hessians to
equations with a lower number of nonlinearities, we will conclude this section with this brief note and leave any research in this respect for the future.

\vskip5mm
\noindent
{\footnotesize
Carlos Escudero\par\noindent
Departamento de Matem\'aticas\par\noindent
Universidad Aut\'onoma de Madrid\par\noindent
{\tt carlos.escudero@uam.es}\par\vskip1mm\noindent
\& \par\vskip1mm\noindent
Instituto de Ciencias Matem\'aticas\par\noindent
Consejo Superior de Investigaciones Cient\'{\i}ficas\par\noindent
{\tt cel@icmat.es}\par\vskip1mm\noindent
}
\end{document}